\newtheorem{theorem}{Theorem}[section]
\newtheorem{proposition}[theorem]{Proposition}
\newtheorem{definition}[theorem]{Definition}
\numberwithin{equation}{section}
\begin{document}

\title{Analytic proof of the emergence of new type of Lorenz-like attractors from the triple instability in systems with $\mathbb{Z}_4$-symmetry}

\author[1]{Efrosiniia Karatetskaia}
\author[1]{Alexey Kazakov}
\author[1]{Klim Safonov}
\author[2]{Dmitry Turaev}

\affil[1]{National Research University Higher School of Economics\\
25/12 Bolshaya Pecherskaya Ulitsa, 603155 Nizhny Novgorod, Russia}

\affil[2]{Imperial College, London SW7 2AZ, United Kingdom}

\date{eyukaratetskaya@gmail.com, kazakovdz@yandex.ru, \\ safonov.klim@yandex.ru, d.turaev@imperial.ac.uk}

\maketitle

\begin{abstract}
We study bifurcations of a symmetric equilibrium state in systems of differential equations invariant with respect to a $\mathbb{Z}_4$-symmetry. We prove that if the equilibrium state has a triple zero eigenvalue, then pseudohyperbolic attractors of different types can arise as a result of the bifurcation. The first type is the classical Lorenz attractor and the second type is the so-called Sim\'o angel. The normal form of the considered bifurcation also serves as the normal form of the bifurcation of a periodic orbit with multipliers $(-1, i, -i)$. Therefore, the results of this paper can also be used to prove the emergence of discrete pseudohyperbolic attractors as a result of this codimension-3 bifurcation, providing a theoretical confirmation for the numerically observed Lorenz-like attractors and Sim\'o angels in the three-dimensional H\'enon map.
\end{abstract}

\section{Introduction and main results}\label{sec1}

It was discovered by Arneodo, Coullet, Spiegel, and Tresser that the triple instability leads to the emergence of chaotic dynamics from an equilibrium state or a periodic orbit \cite{ACS83, ACT85, IR05}. In the general case, the chaotic dynamics generated by the triple instability are associated with the Shilnikov homoclinic loop \cite{Sh65} and are not robust due to the appearance of stability windows under arbitrarily small perturbations. However, Vladimirov and Volkov found \cite{VV93} that in the presence of an additional symmetry, the triple instability (three zero eigenvalues of the linearization matrix) can lead to the birth of the Lorenz attractor, i.e., to robust chaotic behavior. The Vladimirov-Volkov bifurcation creates complexified Lorenz attractors in systems with $O(2)$-symmetry. The triple instability in systems with an axial $\mathbb{Z}_2$-symmetry is described by the real version of the same normal form --- the Shimizu-Morioka system --- and leads to the birth of the classical (real) Lorenz attractor \cite{ASh86, S93, SST93}.

In the present paper, we study the appearance of robust chaotic attractors due to the triple instability in systems with $\mathbb{Z}_4$-symmetry. We derive the corresponding normal form --- a three-dimensional system of ordinary differential equations --- and prove that under certain conditions, in the parameter space, there are infinitely many open disjoint regions corresponding to the robust chaotic dynamics. We show that some of these regions correspond to the existence of the classical Lorenz attractor (like in the Afraimovich-Bykov-Shilnikov geometric model \cite{ABS77, ABS82}). In other regions, we discover robust chaotic attractors of a different nature. We call them Sim\'o angels (Fig.~\ref{Portrait_of_attractors}b,c), see the precise definition in Section \ref{Pseudohyperbolic_attractors}. A discrete analogue of the attractor shown in Fig.~\ref{Portrait_of_attractors}b was first observed by Carles Sim\'o when he performed numerical experiments with the 3D H\'enon map (compare Fig.~\ref{Portrait_of_attractors}b with Fig.~2b from \cite{GOST05}).

Let $\dot{w}=G_{\eta}(w)$ be a family of $n-$dimensional smooth vector fields depending smoothly on a set of parameters $\eta$. Assume that this family is invariant with respect to a $\mathbb{Z}_4$-symmetry $S$, i.e, $S$ is a diffeomorphism
such that $DS(w)\cdot G_{\eta}(w)=G_{\eta}\circ S(w)$ and $S^4={\rm Id}$. Assume that the vector field has a symmetric equilibrium state $O$ (i.e., $S(O)=O$) which coincides with the coordinates' origin. Let the equilibrium $O$ have three zero eigenvalues at $\eta=0$ and other eigenvalues do not lie on the imaginary axis. Then, for small values of $\eta$, there exists a smooth, locally invariant, 3-dimensional  center manifold $W^c_{loc}(O)$, which is also invariant with respect to the symmetry $S$.

We consider the restriction of the vector field to the invariant center manifold, therefore, from now on, we think that our system is defined in $\mathbb{R}^3$. For an attractor on the center manifold to be an attractor of the original system, it is necessary and sufficient that the linearization matrix at $O$ has no eigenvalues to the right of the imaginary axis.

By the Montgomery-Bochner Theorem \cite{BM46}, there exist coordinates $(x,y,z)$ such that the symmetry $S$ acts linearly on $W^c_{loc}(O)$. We assume that $S\neq {\rm Id}$, $S^2\neq {\rm Id}$ on the center manifold. Then, the symmetry is generated by one of the following matrices
$$
S=\begin{pmatrix}
0 & -1 & 0\\
1 & 0 & 0\\
0 & 0 & -1
\end{pmatrix} \ \ \ \text{or} \ \ \
S=\begin{pmatrix}
0 & -1 & 0\\
1 & 0 & 0\\
0 & 0 & 1
\end{pmatrix}.
$$
In this paper, we consider the first case, i.e., the matrix $S$ corresponds to the rotation by $\pi/2$ in the $(x,y)$-plane along with the flip along the $z$-axis. Introduce a complex variable  $u=x+i y$ and denote by $u^*=x-i y$ its conjugate. In the coordinates $(u,u^*,z)$, the vector field on the center manifold is invariant with respect to the linear symmetry defined by the matrix
\begin{equation}\label{eq:symmetry}
S=
\begin{pmatrix}
i & 0 & 0\\
0 & -i & 0\\
0 & 0 & -1
\end{pmatrix}.
\end{equation}
Therefore, it can be written as the following system of differential equations
\begin{equation}\label{FlowNormalFormGeneral}
\begin{aligned}
\dot{u}&=(-\gamma+i \beta )\, u+a_0 \, zu^*  + a_1\, u^2u^*+a_2\, z^2 u + a_3\, (u^*)^3+O_4, \\
\dot{z}&=\mu\, z+b_0\, u^2+b_0^*\, (u^*)^2 +b_1\, z^3+ b_2\, zuu^*+O_4,
\end{aligned}
\end{equation}
where $\gamma$, $\beta$, and $\mu$ vanish at the moment of bifurcation (three zero eigenvalues). We consider $\gamma$, $\beta$, and $\mu$ as small parameters that control the bifurcation unfolding. We assume that the coefficients $a_i, b_i$ are smooth functions of $\gamma$, $\beta$, $\mu$. Note that the coefficients $a_i, b_0$ take complex values, whereas the coefficients $b_1,b_2$, as well as the parameters $\gamma, \beta, \mu$, are real. Hereafter, $O_n$ denotes terms of order $n$ or higher; thus, $O_4$ in \eqref{FlowNormalFormGeneral} stands for terms of order $4$ and higher.

\begin{figure}[h]
    \centering
    \includegraphics[width=1.0\linewidth]{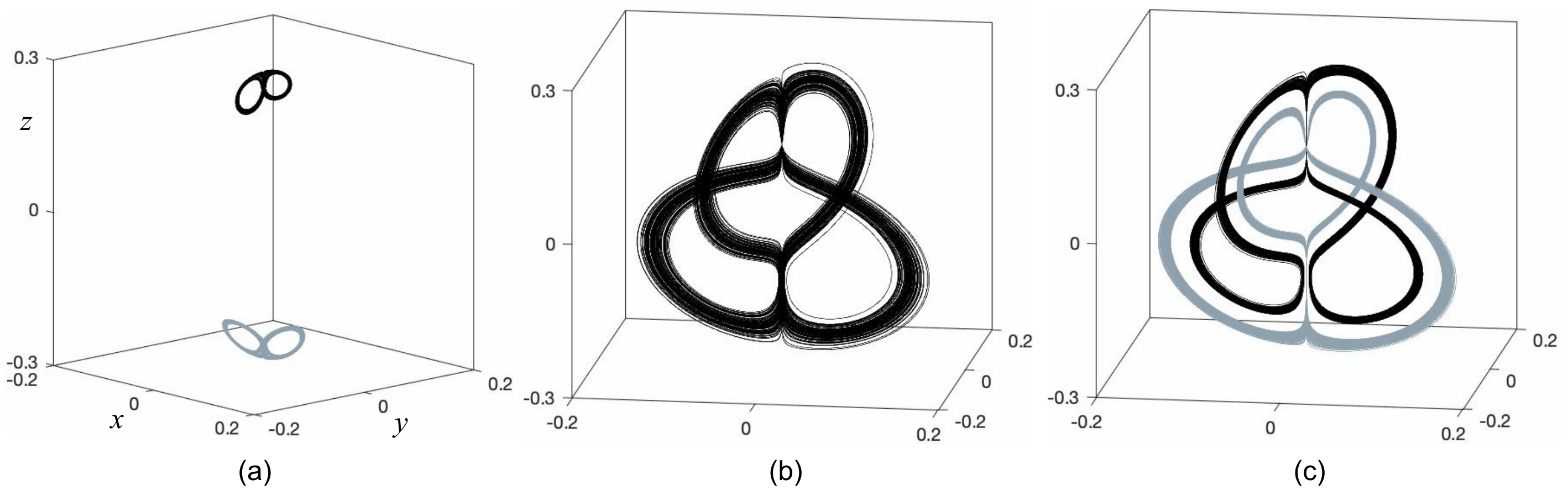}
    \caption{Phase portraits of pseudohyperbolic attractors of system \eqref{FlowNormalFormNumeric}, obtained numerically at $\mu = 0.02, \gamma = 0.07$: a) a symmetric pair of Lorenz attractors, $\beta = 0.16$; b) the four-winged Sim\'o angel, $\beta = 0.002$; c) a pair of two-winged Sim\'o angels, $\beta = 0.0015$.}
    \label{Portrait_of_attractors}
\end{figure}

In \cite{preprint} we study numerically a specific case of system \eqref{FlowNormalFormGeneral} given by
$$
\begin{aligned}
\dot{u}&=(-\gamma+i \beta )\, u-\frac{1-i}{2} \, zu^* +\frac{1-i}{8}\, u^2u^*+ \frac{3+i}{8}\, z^2 u - \frac{1-i}{8}\, (u^*)^3, \\
\dot{z}&=\mu\, z+\frac{1}{2}\, u^2+\frac{1}{2}\, (u^*)^2 -\frac{1}{4}\, z^3-\frac{1}{2}\, zuu^*,
\end{aligned}
$$
or, in  real variables,
\begin{equation}\label{FlowNormalFormNumeric}
\begin{aligned}
\dot{x}&=-\gamma\, x-\beta\, y-\frac{1}{2}\, z(x-y)+\frac{1}{2}\, xy(x+y)+z^2\left(\frac{3}{8}\, x-\frac{1}{8}\, y\right) , \\
\dot{y}&=\beta\, x-\gamma\, y+\frac{1}{2}\, z(x+y)+\frac{1}{2}\, xy(x-y)+z^2\left(\frac{1}{8}\, x+\frac{3}{8}\, y\right), \\
\dot{z}&=\mu \, z+x^2-y^2-\frac{1}{4}\, z^3-\frac{1}{2}\, z(x^2+y^2).
\end{aligned}
\end{equation}
It is found in \cite{preprint} that system \eqref{FlowNormalFormNumeric} has different types of chaotic attractors for small values of the parameters, see Figure~\ref{Portrait_of_attractors}.

In Fig. \ref{bifurcation_diagram} we show the diagram of the top Lyapunov exponent for system \eqref{FlowNormalFormNumeric} when $\mu=0.02$. The orange regions correspond to a positive Lyapunov exponent and, accordingly, to chaotic dynamics. The regions $LA$ and $SA$ in Fig. \ref{bifurcation_diagram} correspond to the existence of the Lorenz attractors and the Sim\'o angels, respectively.

\begin{figure}[h]
    \centering
    \includegraphics[width=1.0\linewidth]{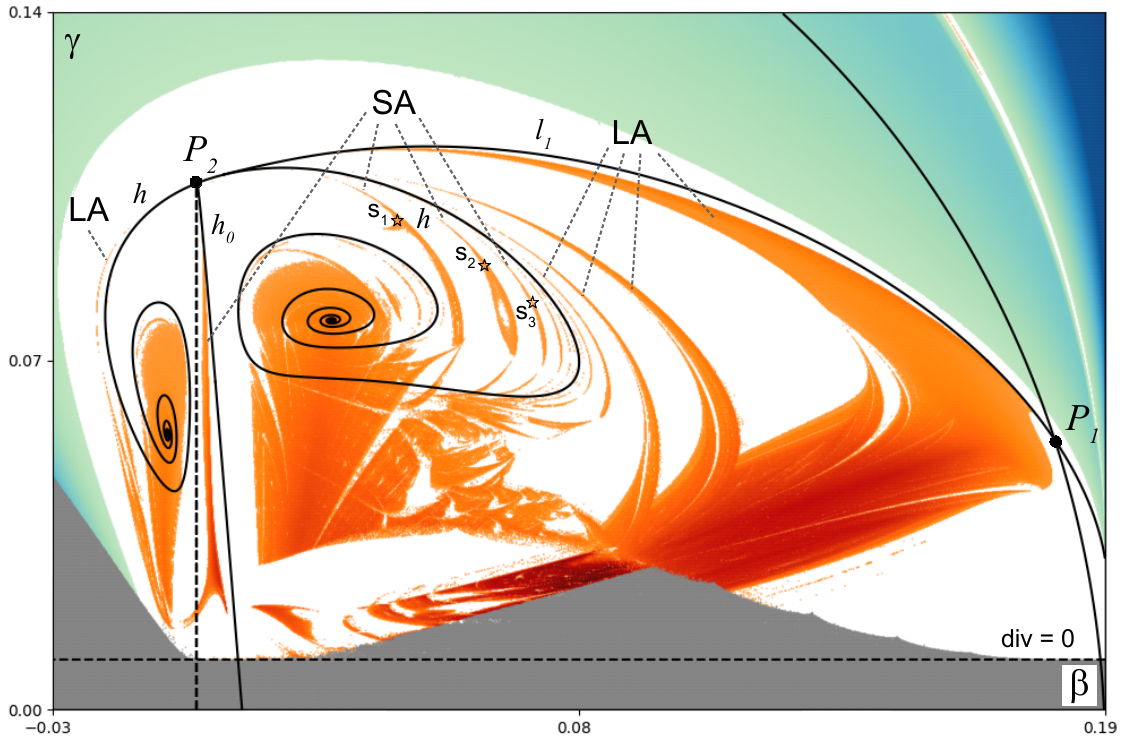}
    \caption{The diagram of the top Lyapunov exponent of system \eqref{FlowNormalFormNumeric} for $\mu=0.02$ and some important bifurcation curves. The regions $LA$ and $SA$ correspond to the existence of the Lorenz attractors and the Sim\'o angels, respectively. These regions start from the point $P_2$ where the curve $h$ corresponding to a heteroclinic connection shown in Fig.~\ref{HeteroclinicConnection} meets the line $\beta=0$ where the saddle $O(0,0,0)$ has two equal negative eigenvalues. The region LA also adjoins the point $P_1$ where the curve $l_1$ corresponding to a heteroclinic butterfly bifurcation with the saddle $O^-$ ($O^+$, by the symmetry) intersects with the curve where this saddle is resonant. The curves $l_2$ and $h$ correspond to the double homoclinic butterfly of the saddle $O^-$ ($O^+$) and the heteroclinic connection between $O^-$ and $O^+$.}
    \label{bifurcation_diagram}
\end{figure}

In this paper, we provide an analytic proof that such strange attractors indeed exist for open sets of values of $(\mu,\gamma,\beta)$ in system \eqref{FlowNormalFormNumeric}, as well as in general system \eqref{FlowNormalFormGeneral}, under appropriate conditions on the coefficients. First, we prove the following

\begin{theorem}\label{th1}
The bifurcation of a triple zero eigenvalue of a $\mathbb{Z}_4$-symmetric equilibrium such that the symmetry acts on the local center manifold as the multiplication by the matrix $S$ given by \eqref{eq:symmetry} leads to the birth of a pair of symmetric Lorenz attractors, if the coefficients of the normal form  \eqref{FlowNormalFormGeneral} satisfy the conditions
$$
b_1<0, \ \ {\rm Im}\, (a_0 b_0)\neq 0.
$$
\end{theorem}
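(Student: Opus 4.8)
\emph{Overview and setup.} The plan is to bridge \eqref{FlowNormalFormGeneral} to two known facts — (i) the Shimizu-Morioka system carries a robust (pseudohyperbolic) Lorenz attractor for an open set of parameters \cite{ASh86,S93,SST93}, realised also inside the Afraimovich-Bykov-Shilnikov geometric model \cite{ABS77,ABS82}; (ii) pseudohyperbolic attractors persist under $C^1$-small perturbations — via a rescaling of \eqref{FlowNormalFormGeneral} to a limit normal form whose higher-order tail is a small perturbation. First I would fix the regime $\mu>0$ small, $\gamma=\mathcal O(\sqrt\mu)$, $\beta$ near $0$. The $z$-axis is $\mathrm{Fix}(S^2)$, hence invariant, and on it $\dot z=\mu z+b_1z^3+O_4$; since $b_1<0$ there is, besides $O$, a symmetric pair of equilibria $Q^\pm=(0,0,\pm\sqrt{\mu/|b_1|}+\dots)$ with $S(Q^+)=Q^-$. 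Linearising \eqref{FlowNormalFormGeneral} at $Q^+$ gives the $z$-eigenvalue $-2\mu+\dots$ and, in the $u$-plane, the operator $-\gamma u+a_0z_0u^*$, which at $\beta=0$ is symmetric with real eigenvalues $\pm|a_0|z_0-\gamma$. Taking $\gamma$ slightly below $|a_0|z_0$ makes $Q^\pm$ saddles with a one-dimensional unstable and a two-dimensional stable manifold whose weak direction is along $z$ — the configuration of the central saddle of a Lorenz attractor. This is where $b_1<0$ is used.

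\emph{Rescaling to the limit normal form.} Next I would introduce a small parameter $\varepsilon$ and rescale $(u,z)$, time, and $(\gamma,\beta,\mu)$ so that \eqref{FlowNormalFormGeneral} becomes $\dot w=\mathcal L_{\lambda,\alpha}(w)+\varepsilon R_\varepsilon(w)$, where $\mathcal L$ is an explicit polynomial field — the proper normal form of the bifurcation — depending on rescaled parameters (corresponding to the saddle index $\nu=2\mu/(|a_0|z_0-\gamma)$ at $Q^+$ and a suitably rescaled $\beta$), and $R_\varepsilon$ collects the $O_4$ terms together with everything lowered in order by the rescaling. It is convenient to organise $\mathcal L$ in the $S^2$-invariant variables $(P,z)$ with $P=b_0u^2$: then $\dot P=2(-\gamma+i\beta)P+2a_0b_0\,z\,|P|/|b_0|+\dots$ and $\dot z=\mu z+2\operatorname{Re}P+b_1z^3+\dots$. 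The essential bookkeeping here is to check that no negative power of $\varepsilon$ survives in $R_\varepsilon$, so that the tail is genuinely $C^1$-small on the relevant bounded domain.

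\emph{The limit normal form carries a Lorenz attractor.} I would then analyse $\mathcal L$. In the variables above, if $\operatorname{Im}(a_0b_0)=0$ the equation for $\operatorname{Im}P$ reduces at $\beta=0$ to $\tfrac{d}{dt}\operatorname{Im}P=-2\gamma\operatorname{Im}P$, so $\{\operatorname{Im}P=0\}$ is an attracting invariant plane and the limit dynamics is two-dimensional — no robust chaos. The hypothesis $\operatorname{Im}(a_0b_0)\neq0$ removes exactly this degeneracy: it is the non-degeneracy condition that lets the unstable separatrices of $Q^+$ leave the invariant half-planes $\{\arg u=\mathrm{const}\}$ and wind transversally around the $z$-axis before returning to $W^s(Q^+)$. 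With this, I would reduce the dynamics of $\mathcal L$ near $Q^+$ — on the two-dimensional centre-unstable manifold, with the fast $u$-direction slaved — together with the global return of the separatrices, essentially to the Shimizu-Morioka system (equivalently, exhibit in $\mathcal L$ the resonant heteroclinic butterfly of $Q^+$ marked $P_1$ in Fig.~\ref{bifurcation_diagram}), and invoke \cite{ASh86,S93,SST93,ABS77,ABS82}. Controlling the separatrix of $Q^+$ all the way around and back is the main obstacle; the local and algebraic parts are routine.

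\emph{Robustness and the symmetric pair.} The Lorenz attractor thus obtained in $\mathcal L$ is pseudohyperbolic: on a trapping region it carries a dominated splitting into a one-dimensional strongly contracting bundle and a two-dimensional volume-expanding one, a property open in the $C^1$ topology. Hence $\mathcal L_{\lambda,\alpha}+\varepsilon R_\varepsilon$, i.e.\ the original system \eqref{FlowNormalFormGeneral}, has a Lorenz attractor $\mathcal A$ near $Q^+$ for all small $\varepsilon$ — that is, for an open set of $(\gamma,\beta,\mu)$ accumulating at $0$. Finally, since $S$ is an automorphism with $S(Q^+)=Q^-$, the set $S(\mathcal A)$ is a Lorenz attractor near $Q^-$; as $\mathcal A$ is confined to a small neighbourhood of $Q^+$ disjoint from one of $Q^-$, the attractors $\mathcal A$ and $S(\mathcal A)$ are distinct, and each is invariant under $S^2$ acting as the butterfly symmetry $u\mapsto-u$. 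This yields the asserted pair of symmetric Lorenz attractors, and applies verbatim to the model \eqref{FlowNormalFormNumeric}, for which $b_1=-1/4<0$ and $\operatorname{Im}(a_0b_0)=1/4\neq0$.
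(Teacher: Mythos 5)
Your overall strategy—rescale \eqref{FlowNormalFormGeneral} to Shimizu--Morioka plus an $O(\sqrt\mu)$ tail and then invoke $C^1$-persistence of the pseudohyperbolic attractor—is exactly the paper's, and you correctly use $b_1<0$ to produce the symmetric pair $O^\pm$ on the $z$-axis. However, you aim at the wrong parameter regime, and this is not a cosmetic slip but precisely what causes the ``main obstacle'' you flag. You place $\beta$ near $0$ and $\gamma$ just below $|a_0|z_0$, so that at $O^\pm$ only one eigenvalue in the $u$-plane vanishes while the other stays $\approx -2|a_0|z_0$. That is a Bogdanov--Takens (double-zero) degeneracy with a transverse strong contraction; its local normal form is effectively two-dimensional and carries no chaotic attractor, which is why you are then forced to contemplate tracing the separatrix of $O^+$ all the way around the $z$-axis and invoke a resonant-butterfly criterion — a genuinely hard global step that the proposal leaves unresolved. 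The paper does something different: after the $\sqrt\mu$-rescaling \eqref{mu_scaling}, it takes $\rho=\gamma/(z_0|a_0|)=O(\sqrt\mu)$ (i.e.\ $\gamma=O(\mu)$, much smaller than what you use) and $\omega=\beta/(z_0|a_0|)\to\operatorname{sign}(B)=\pm1$ (i.e.\ $\beta\approx\pm z_0|a_0|\sim\sqrt\mu$, \emph{not} near $0$). At $(\rho,\omega,\mu)=(0,\pm1,0)$ \emph{all three} eigenvalues at $O^\pm$ vanish simultaneously; the nilpotent-linear-part normal form there is the Shimizu--Morioka system (Theorem~\ref{ThreeZeros}), and after \eqref{last_scaling} the system becomes \eqref{eq:SM_plus_small_terms} with an $O(\sqrt\mu)$ tail on an \emph{arbitrary ball} containing the SM attractor. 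That uniform closeness on a whole ball is what makes the separatrix control unnecessary: the pseudohyperbolic SM attractor sits in a compact forward-invariant domain, and $C^1$-persistence gives the Lorenz attractor in \eqref{FlowNormalFormRescaled} directly.

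This also corrects your reading of the hypothesis $\operatorname{Im}(a_0b_0)\neq0$. It is not an ``invariant-plane'' non-degeneracy letting separatrices wind around the $z$-axis; in the rescaled system, $B=\operatorname{Im}(a_0b_0)\sqrt{|b_1|}/|a_0|^2$ is exactly the coefficient that must be nonzero so that $\omega=\operatorname{sign}(B)$ is a well-defined target, i.e.\ so that the triple-zero degeneracy at $O^\pm$ is accessible within the unfolding and the SM reduction applies. The condition $\operatorname{Re}(a_0b_0)<0$ you do not need here is the hypothesis of Theorems~\ref{th2}–\ref{th3}, which work in a different part of the $(\gamma,\beta)$-plane (near the stem point $P_2$ on $\beta=0$, closer to the regime you describe) and with an entirely different mechanism (the four-winged heteroclinic connection), not via the Shimizu--Morioka normal form. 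In short: the reduction target and the robustness argument are right, but you must set $\gamma=O(\mu)$ and $\beta\approx\pm\sqrt{\mu}\,|a_0|/\sqrt{|b_1|}$ so that $O^\pm$ undergo the triple-zero degeneracy, and once you do, the global step you worried about disappears.
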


The idea of the proof is as follows. We show that there exists a transformation of coordinates and time that brings system \eqref{FlowNormalFormGeneral} to the following form
\begin{equation}\label{SM_system1}
\begin{aligned}
\dot{x}&=y,  \\
\dot{y}&=x(1-z) -\lambda y+O(\sqrt{\mu}), \\
\dot{z}&=-\alpha z + x^2 +O(\sqrt{\mu}),
\end{aligned}
\end{equation}
where the coefficients $\lambda$ and $\alpha$ depend on the bifurcation parameters in such a way that they run all possible positive values when $\gamma, \beta$, and $\mu$ run near zero. When $\mu=0$, system \eqref{SM_system1} coincides with the Shimizu-Morioka system \cite{SM80}. The existence of a Lorenz attractor in the Shimizu-Morioka system for an open region in the $(\alpha,\lambda)$-plane was discovered numerically by Andrey Shilnikov in \cite{ASh86, S93}. A computer-assisted proof of the existence of the Lorenz attractor was given in \cite{CTZ18}. Since the Lorenz attractor is preserved by any $C^1$-small perturbation of a system (see Section \ref{Pseudohyperbolic_attractors}), it follows that system \eqref{SM_system1} also has the Lorenz attractor in some region of the parameter plane $(\alpha,\lambda)$ for all sufficiently small $\mu>0$. Therefore, system \eqref{FlowNormalFormGeneral} has the Lorenz attractor for an open region of small parameters $\gamma,\beta$, and $\mu$, as claimed.

In Fig. \ref{ShimizuMoriokaDiagram}a we show an enlarged fragment of the region $LA$ corresponding to the existence of the symmetric pair of Lorenz attractors. Note that this diagram is very similar to the diagram of the top Lyapunov exponent for the Shimizu-Morioka system, see Fig. \ref{ShimizuMoriokaDiagram}b.

\begin{figure}[h]
    \centering
    \includegraphics[width=1\linewidth]{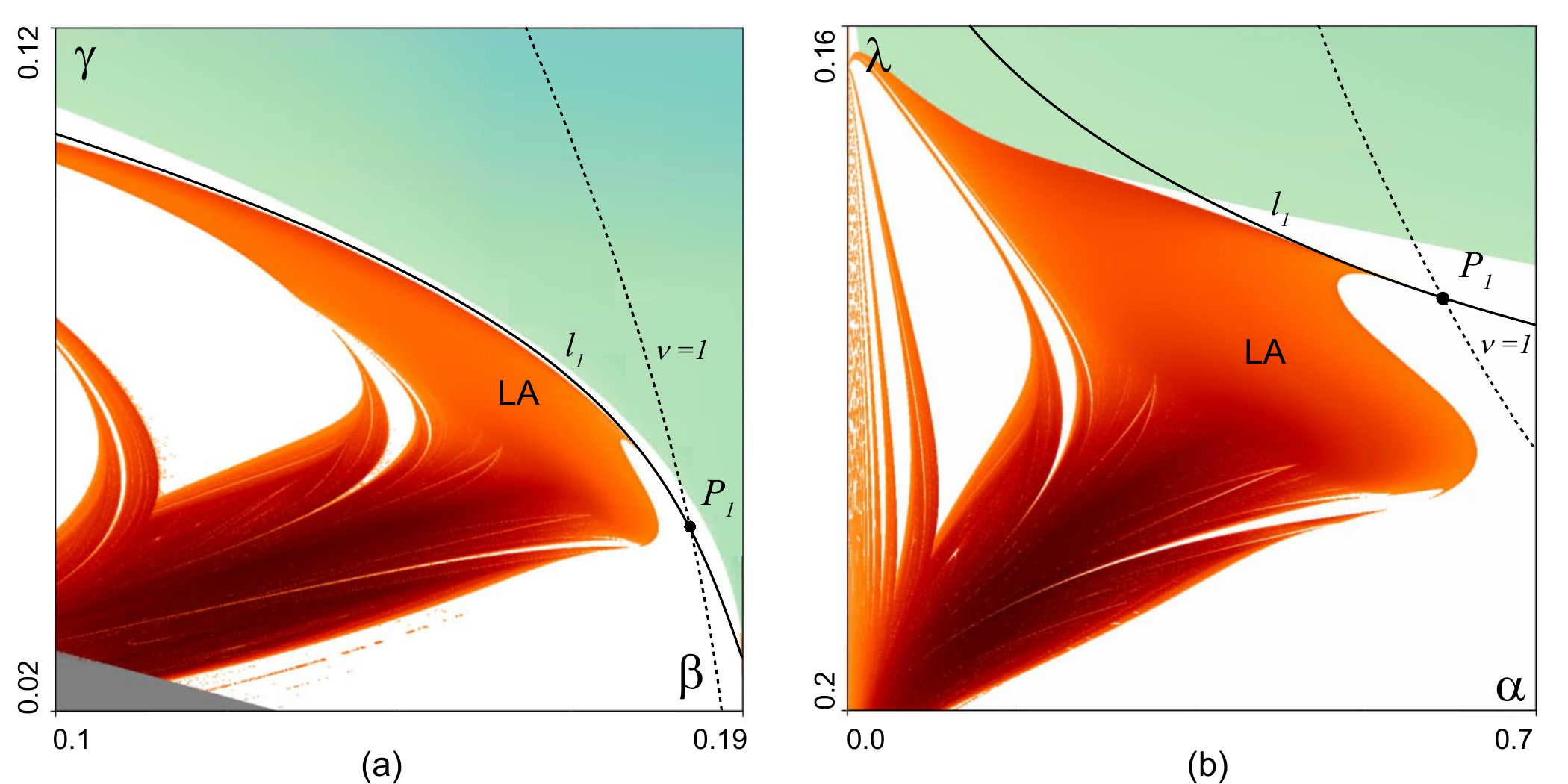}
    \caption{Lyapunov diagram a) for system \eqref{FlowNormalFormNumeric} and b) for the Simizhu-Morioka system. In both cases, the Lorenz attractor existence region LA adjoins the point $P_1$, which is the intersection of a homoclinic butterfly bifurcation curve $l_1$ and the curve $\nu=1$ corresponding to a resonant saddle equilibrium state (with the saddle index $\nu=1$).}
    \label{ShimizuMoriokaDiagram}
\end{figure}

Our next result describes the birth of strange attractors near the point $P_2$ in Fig. \ref{bifurcation_diagram}, where the regions $SA$ and $LA$ meet. This point is the intersection of the line $\beta=0$ and the bifurcation curve $h$ corresponding to the heteroclinic connection shown in Figure~\ref{HeteroclinicConnection}. System \eqref{FlowNormalFormNumeric} (as well as the general normal form \eqref{FlowNormalFormGeneral} for $b_1<0$ and $\mu>0$) has three equilibrium states on the $z$-axis. Note that the $z$-axis is invariant with respect to $S$. The two equilibria, which we denote by $O^-$ and $O^+$, are symmetric to each other with respect to $S$, i.e., $S(O^-)=O^+$ and $S(O^+)=O^-$. The equilibria $O^+$ and $O^-$ are saddles with one-dimensional unstable manifolds (each consisting of a pair of trajectories called unstable separatrices) and with two-dimensional stable manifolds (containing the semi-axes $z>0$ and $z<0$, respectively). The symmetric equilibrium $O$ at zero has a one-dimensional unstable manifold (the segment of the $z$-axis between $O^+$ and $O^-$) and a two-dimensional stable manifold.  If $\beta \neq 0$, then the equilibrium $O$ is a saddle-focus, i.e., the eigenvalues to the left of the imaginary axis are complex conjugate (see Fig.~\ref{HeteroclinicConnection}a). At $\beta=0$ the eigenvalues become real and equal to each other, so $O$ becomes a dicritical node on its stable manifold (see Fig.~\ref{HeteroclinicConnection}b). The bifurcation curve $h$ corresponds to the unstable manifolds of $O^+$, $O^-$ getting to the two-dimensional stable manifold of $O$, as given by the following

\begin{figure}[h]
    \centering
    \includegraphics[width=0.7\linewidth]{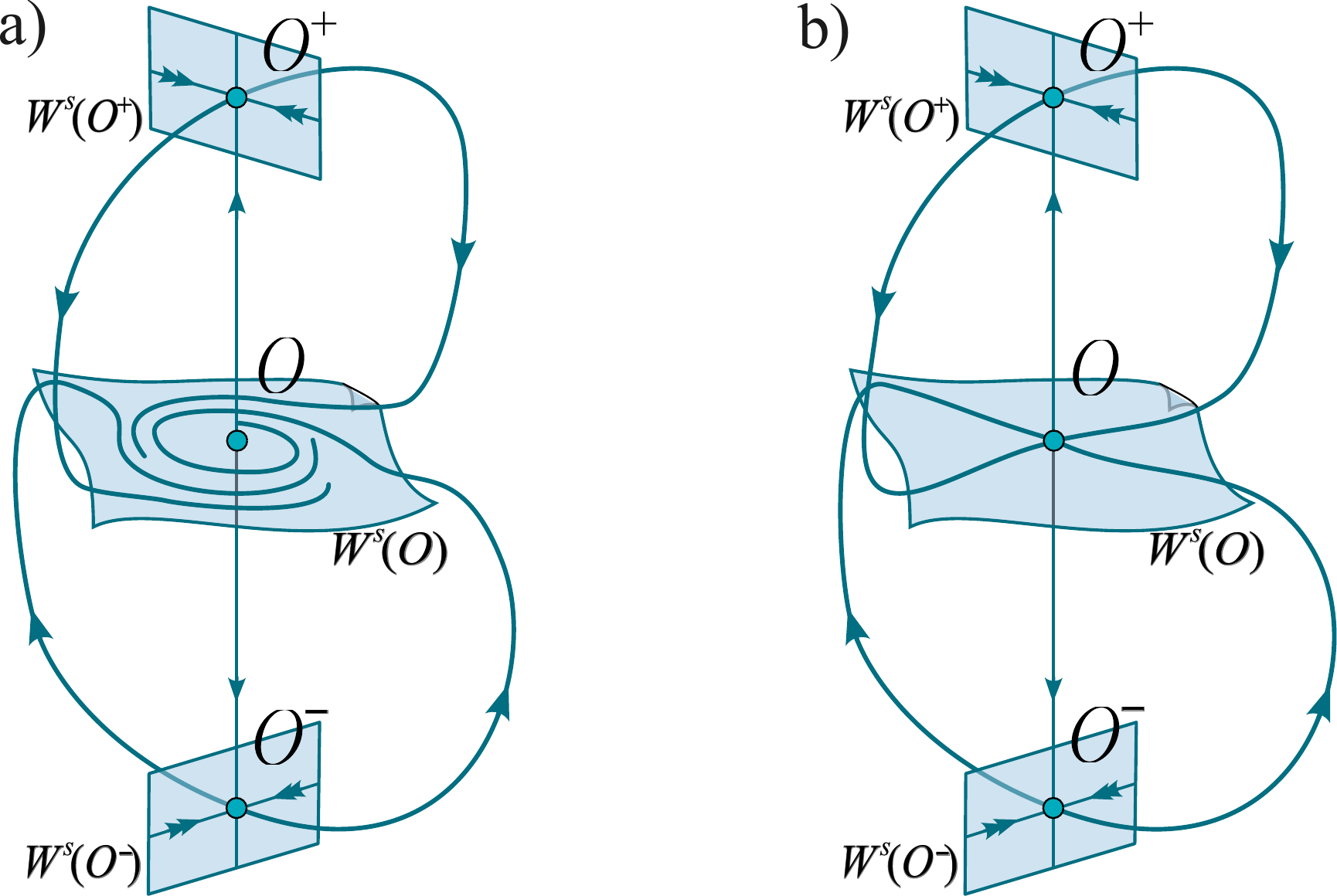}
    \caption{Four-winged heteroclinic connection for a) $\beta\neq 0$, b) $\beta=0$. The equilibrium $O$ is a saddle-focus at $\beta\neq 0$ and becomes a dicritical node on its stable manifold at $\beta=0$.}
    \label{HeteroclinicConnection}
\end{figure}

\begin{theorem}\label{th2}
Let the coefficients of the normal form  \eqref{FlowNormalFormGeneral} satisfy
$$
b_1<0, \ \ {\rm Re}\,  (a_0 b_0)<0.
$$
Then there exists a smooth function $h(\beta,\mu)$ defined for all sufficiently small $\beta$ and all sufficiently small $\mu\geq 0$ such that  $h(0,0)=0$ and system \eqref{FlowNormalFormGeneral} at $\gamma=h(\beta,\mu)$ has an $S$-symmetric four-winged heteroclinic connection such that the one-dimensional unstable separatrices of the equilibrium states $O^+$ and $O^-$ tend to the equilibrium state $O$ as $t\to+\infty$ (see Fig.~\ref{HeteroclinicConnection}).
\end{theorem}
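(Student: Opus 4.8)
The plan is to reduce system \eqref{FlowNormalFormGeneral} to a convenient normal form near the $z$-axis, and then track the unstable separatrices of $O^\pm$ by a shooting argument in the parameter $\gamma$. First I would scale coordinates and time exactly as in the proof of Theorem \ref{th1}, bringing the system to the form \eqref{SM_system1}; under the hypothesis $\mathrm{Re}\,(a_0b_0)<0$ (rather than $\mathrm{Im}\,(a_0b_0)\ne 0$) the rescaled system at $\mu=0$ should again be of Shimizu--Morioka type, but now with the sign of the relevant quadratic coupling arranged so that the equilibria $O^\pm$ sit on the $z$-axis at $z=\pm 1$ (in the rescaled variables) and $O$ at the origin, with the $(x,y)$-dynamics near $O$ governed by $\ddot x + \lambda \dot x + x(1-z)=0$-type linearization whose eigenvalues are real and equal precisely when $\beta=0$. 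The key structural fact I would establish first is that the plane $\{z\text{-axis}\}\cup\{$two-dimensional stable manifold of $O\}$ and the one-dimensional unstable manifolds of $O^\pm$ all depend smoothly on $(\gamma,\beta,\mu)$ for $\mu\ge 0$ small, using standard invariant-manifold theory (the manifolds persist and vary smoothly down to $\mu=0$ because the hyperbolicity of $O^\pm$ and the partial hyperbolicity of $O$ are uniform).

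The core of the argument is a one-dimensional shooting/monotonicity argument. Fix small $\beta$ and $\mu\ge 0$ and consider the branch $\Gamma^+(\gamma)$ of the unstable separatrix of $O^+$ that initially moves toward the region containing $O$. I would introduce a cross-section transverse to the flow (for instance a small disk $\{z=1-\delta\}$ for the $\mu=0$ skeleton, or the analogous section in general) and define a scalar signed ``miss distance'' $D(\gamma;\beta,\mu)$ measuring on which side of the two-dimensional stable manifold $W^s(O)$ the orbit $\Gamma^+(\gamma)$ passes — concretely, after following $\Gamma^+$ until it crosses the section, measure its signed displacement from $W^s(O)\cap\{\text{section}\}$ along the one-dimensional ``unstable-to-$W^s(O)$'' direction. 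The function $D$ is smooth in all arguments (by smooth dependence of the separatrix and the stable manifold on parameters). I then need to show $D$ changes sign as $\gamma$ varies over a small interval: for $\gamma$ large enough (strong contraction toward the $z$-axis) the separatrix should be pulled back toward $O$ and fall on one side — in fact one shows it limits onto $O$ along $W^s(O)$ only for a critical $\gamma$ — while for $\gamma$ small (weak damping) the separatrix overshoots past $W^s(O)$ and lands on the other side (it is attracted instead toward the opposite saddle's region or escapes). Establishing these two opposite-sign boundary behaviors is where I expect to rely on explicit estimates for the $\mu=0$ Shimizu--Morioka-type skeleton: at $\mu=0$ one knows (from the Lorenz-model analysis of \cite{ABS77,ABS82} and the Shimizu--Morioka studies \cite{ASh86,S93,SST93}) that a heteroclinic connection $O^\pm\to O$ exists along a curve through the origin of parameter space, and I would cite/adapt that to get $D(\gamma_-;\beta,0)<0<D(\gamma_+;\beta,0)$ for suitable $\gamma_\mp$; smoothness then extends the sign change to small $\mu>0$.

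Once the sign change is in hand, the intermediate value theorem gives, for each $(\beta,\mu)$, a value $\gamma=h(\beta,\mu)$ with $D=0$, i.e.\ $\Gamma^+(\gamma)\subset W^s(O)$, hence $\Gamma^+(\gamma)\to O$ as $t\to+\infty$; applying the symmetry $S$ gives the full four-winged connection (the two branches of $W^u(O^+)$ and the two branches of $W^u(O^-)$, which are images of $\Gamma^+$ under powers of $S$, all land on $W^s(O)$, using $S(O^-)=O^+$ and $S$-invariance of $W^s(O)$ and of the whole configuration). Smoothness of $h$ follows from the implicit function theorem applied to $D(\gamma;\beta,\mu)=0$, for which I must verify $\partial_\gamma D\ne 0$ at the connection; this transversality is the second place I expect difficulty, and I would argue it either by a direct variational computation (differentiating the separatrix with respect to $\gamma$ and integrating the variational equation against the appropriate adjoint solution along the heteroclinic orbit, showing the resulting Melnikov-type integral is nonzero because $\gamma$ is the damping coefficient and the orbit is not closed) or, failing an explicit sign, by invoking genericity together with the already-established existence to define $h$ as (a smooth branch of) the zero set. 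The main obstacle overall is thus the combination of (i) obtaining the two opposite-sign boundary estimates on $D$ at the endpoints of the $\gamma$-interval when $\mu=0$, which requires genuine control of the global behavior of the Shimizu--Morioka-type skeleton rather than a local computation, and (ii) verifying the transversality $\partial_\gamma D\ne 0$ needed for the smoothness and uniqueness of $h$.
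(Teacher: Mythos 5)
Your proposal has the right general shape (reduce to a nicer form, locate a critical parameter where the separatrix lies on $W^s(O)$, then use smooth dependence and a transversality condition to obtain the branch $h$), but it is missing the one structural discovery that makes the paper's argument go through, and the substitute you propose (citing the Shimizu--Morioka literature for the sign change) would not work. The paper does \emph{not} reduce to the Shimizu--Morioka form here; that reduction (as in the proof of Theorem~\ref{th1}) is valid only in the regime $\rho=O(\sqrt{\mu})$, $\omega\approx\pm 1$, because it shifts the origin to $O^\pm$ and rescales near that saddle, losing sight of $O$. The heteroclinic connection in Theorem~\ref{th2} instead occurs at $\rho\approx 1/2$ in the rescaled system \eqref{FlowNormalFormRescaled}, which is an order-one parameter value, and the relevant orbit travels from a neighborhood of $O^+$ down to $O$, a genuinely global transition. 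The SM-type literature gives you nothing about that trajectory, so your claim that you could get $D(\gamma_-;\beta,0)<0<D(\gamma_+;\beta,0)$ ``from the Lorenz/SM analysis'' is the main gap: there is no obvious source for those two opposite-sign boundary estimates.

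What the paper actually does is notice that at $\omega=\mu=0$ the rescaled system \eqref{FlowNormalFormRescaled} leaves the coordinate planes $\{x=0\}$ and $\{y=0\}$ invariant, and the restriction to $\{y=0\}$ is the planar system \eqref{eq:system_rho_mu_zero_restriction}, which is integrable with first integral $E(x,z)=(z-\rho)^2-2Ax^2$. Under your hypothesis $\mathrm{Re}(a_0b_0)<0$ one has $A<0$, the level curves are ellipses, and at $\rho=1/2$ the ellipse $E=1/4$ passes through both $O^+$ and $O$, giving an \emph{explicit} heteroclinic orbit $x_0(t)=e^{t/2}/(\sqrt{2|A|}(1+e^t))$, $z_0(t)=1/(1+e^t)$. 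This replaces your entire shooting/IVT argument: there is no need to establish a sign change because the connection is produced exactly at $\rho=1/2$, $\omega=\mu=0$. The transversality $\partial_\rho D\neq 0$ that you flag as the second difficulty is then also elementary, because the energy levels through $O^+$ and $O$ are $E=(1-\rho)^2$ and $E=\rho^2$, which intersect the section $\{z=\rho\}$ at $x=(1-\rho)/\sqrt{2|A|}$ and $x=\rho/\sqrt{2|A|}$, giving a miss distance $(1-2\rho)/\sqrt{2|A|}$ with obviously nonzero $\rho$-derivative; you do not need a Melnikov integral or a genericity appeal. Finally, the extension to small $\mu>0$ and small $\omega$ is handled in the paper by setting up an integral equation for the perturbed heteroclinic orbit and solving it by the contraction mapping principle in a weighted norm (and the splitting function then supplies $h$ via the implicit function theorem); this is a bifurcation-function/Lyapunov--Schmidt argument rather than the intermediate value theorem, and it also produces the leading-order expansion of $h$ used in Theorem~\ref{th5}.

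In short: the idea of reducing, finding the connection, and invoking transversality plus the IFT is sound, but your proposal lacks the integrable skeleton at $\omega=\mu=0$, $\rho=1/2$, which is precisely what makes both the existence and the transversality computations tractable, and the appeal to Shimizu--Morioka results is not a valid substitute.
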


We also prove that the regions $SA$ and $LA$ in Fig. \ref{bifurcation_diagram} indeed adjoin the point $P_2$.

\begin{theorem}\label{th3}
Let the coefficients of the normal form \eqref{FlowNormalFormGeneral} satisfy
$$
b_1<0, \ \ {\rm Re}\,  (a_0 b_0)<0.
$$
Then, for all sufficiently small $\mu>0$, in the half-plane $\gamma<h(\beta,\mu)$ of the $(\gamma,\beta)$-plane there exists a countable sequence of open disjoint regions $\mathcal {A}^-_k$ which adjoin the point $(\gamma,\beta)=(h(0,\mu),0)$ and correspond to the existence of a symmetric pair of the Lorenz attractors. Similarly, in the half-plane $\gamma>h(\beta,\mu)$ there exists a countable sequence of open disjoint regions $\mathcal {A}^+_k$ which adjoin the point $(\gamma,\beta)=(h(0,\mu),0)$ and correspond to the existence of a Sim\'o angel.
\end{theorem}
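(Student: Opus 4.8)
The plan is to study the Poincaré return map in a fixed small neighbourhood of the four-winged heteroclinic connection $\Gamma$ furnished by Theorem~\ref{th2}, for parameters $(\gamma,\beta)$ close to $P_2=(h(0,\mu),0)$ with $\mu>0$ small and fixed. The cycle $\Gamma$ consists of the two arcs of the $z$-axis joining $O$ to $O^+$ and to $O^-$ (which are $W^u(O)$ and portions of $W^s(O^\pm)$) together with the four unstable separatrices of $O^+$ and $O^-$, which at $\gamma=h(\beta,\mu)$ land on the two-dimensional $W^s(O)$. When $\gamma$ is detuned from $h(\beta,\mu)$ these separatrices miss $W^s(O)$: the separatrix of $O^+$ re-enters a neighbourhood $U_0$ of $O$ at a point whose $z$-coordinate $z_0$ measures the splitting, passes $O$, and leaves $U_0$ towards $O^+$ if $z_0>0$ and towards $O^-$ if $z_0<0$; by the same argument that defines $h$ in Theorem~\ref{th2}, $z_0$ is a regular function of the detuning $\gamma-h(\beta,\mu)$ which vanishes and changes sign together with it. Hence, for $\gamma$ on one side of $h$, the separatrix of $O^+$ returns to $O^+$ and, together with the symmetry $S^2$ (which interchanges the two separatrices of $O^+$), forms a symmetric homoclinic butterfly of $O^+$ --- the Afraimovich--Bykov--Shilnikov configuration \cite{ABS77, ABS82} for the birth of the Lorenz attractor, with its $S$-image near $O^-$ a second such configuration --- while for $\gamma$ on the other side of $h$ the separatrix of $O^+$ passes $O$ towards $O^-$ and, by the symmetry $S$ (which swaps $O^+$ and $O^-$), one obtains an $S$-equivariant heteroclinic cycle linking $O^+$ and $O^-$, from which the Simó angel arises.

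Next I would write the return map to cross-sections $\Pi_0$, $\Pi_\pm$ transverse to $\Gamma$ near $O$, $O^+$, $O^-$ as a composition of local maps near the equilibria with global transition maps along the arcs of $\Gamma$. Linearising \eqref{FlowNormalFormGeneral} at $O^\pm=(0,0,\pm\sqrt{-\mu/b_1})$, one checks (using $b_1<0$, ${\rm Re}\,(a_0 b_0)<0$ and $\mu$ small) that $O^\pm$ is a saddle with a one-dimensional unstable manifold, leading stable eigenvalue $-2\mu$ along the $z$-axis, and transverse expansion rate of order $\sqrt\mu$, so its local map is a standard saddle map. The delicate ingredient is the local map near $O$: for $\beta\neq0$, $O$ is a saddle-focus with eigenvalues $-\gamma\pm i\beta$ and $\mu$, and a trajectory that enters $U_0$ at distance $\sim|z_0|$ from $W^s(O)$ stays in $U_0$ for a time $\sim\mu^{-1}\ln(1/|z_0|)$, during which it contracts transversally by a factor $\sim|z_0|^{\gamma/\mu}$ and turns around the $z$-axis through an angle $\theta(z_0)\sim(\beta/\mu)\ln(1/|z_0|)$. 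Thus, for fixed $\beta\neq0$, the angle $\theta$ increases monotonically to $+\infty$ as $\gamma\to h$, so each of the half-planes $\gamma<h$ and $\gamma>h$ is split into countably many pairwise disjoint strips $\{k\le\theta(z_0)/\pi<k+1\}$ which contract to $P_2$ as $k\to\infty$ and occupy thin horn-shaped sets where $\beta$ is of order $\pi\mu k/\ln(1/|\gamma-h|)$.

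The core of the proof is a renormalisation. Fixing a large integer $k$, restrict $(\gamma,\beta)$ to the $k$-th strip, rescale the coordinates on $\Pi_\pm$ and the residual detuning parameter (the fractional part of $\theta/\pi$ together with $\beta$), and show that the rescaled return map $\mathcal F_k$ converges, as $k\to\infty$ in the $C^r$-topology on compact sets, to a limit model map $\mathcal F_\infty$ that depends on the side. On the side of the homoclinic butterfly, $\mathcal F_\infty$ is conjugate to the Poincaré map of the geometric Lorenz model \cite{ABS77, ABS82} --- equivalently of a Shimizu--Morioka system \eqref{SM_system1} inside its Lorenz-attractor region --- and hence carries a pseudohyperbolic Lorenz attractor by \cite{CTZ18} and the $C^1$-robustness of such attractors, exactly as in the proof of Theorem~\ref{th1}. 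On the other side, $\mathcal F_\infty$ is the $S$-equivariant model map of the Simó angel --- a symmetric gluing on $\Pi_+\cup\Pi_-$ of two Lorenz-type maps --- whose attractor, a four-winged Simó angel (or a symmetric pair of two-winged ones) in the sense of Section~\ref{Pseudohyperbolic_attractors}, can be shown to be pseudohyperbolic and hence $C^1$-robust either by reducing the second iterate of $\mathcal F_\infty$ on one component $\Pi_\pm$ to a geometric Lorenz model, or by a direct analysis of $\mathcal F_\infty$. Since pseudohyperbolicity is a $C^1$-open property (Section~\ref{Pseudohyperbolic_attractors}), for every sufficiently large $k$ there is an open set of the rescaled detuning, hence an open region $\mathcal A^-_k\subset\{\gamma<h\}$ (respectively $\mathcal A^+_k\subset\{\gamma>h\}$) inside the $k$-th strip over which the true return map --- and therefore system~\eqref{FlowNormalFormGeneral} --- has the corresponding attractor; on the $\mathcal A^-_k$ side it is localised near $O^+$, and its $S$-image near $O^-$ provides the symmetric pair. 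The $\mathcal A^\pm_k$ lie in pairwise disjoint strips, and, since each strip narrows onto $P_2$ as $\beta\to0$, every $\mathcal A^\pm_k$ adjoins $P_2$.

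The principal obstacle is the $C^r$-convergence $\mathcal F_k\to\mathcal F_\infty$ together with the identification of the two limit maps: it requires uniform control of the global transition maps and of the saddle-focus passage near $O$, uniform control of the dependence $z_0=z_0(\gamma,\beta,\mu)$, a verification that the rescaled detuning is a regular parameter of the limiting family, and a check that the pseudohyperbolic splitting --- volume contraction together with a dominated transversal contraction --- is inherited by the composed map over the whole admissible range of the rescaled parameter; the local estimates near $O^\pm$, whose transverse expansion degenerates as $\mu\to0$, must be carried out with particular care. The second substantial point is the analysis of the limit model on the $\gamma>h$ side, i.e. proving that its attractor is genuinely a Simó angel and that it is $C^1$-robust, which is where the new content of the theorem is concentrated.
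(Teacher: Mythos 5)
Your high-level picture — detuning $\gamma$ from $h(\beta,\mu)$ splits the heteroclinic connection into a Lorenz-type configuration on one side and a Sim\'o-type configuration on the other, while the saddle-focus rotation near $O$ (angle of order $(\beta/\mu)\ln(1/|z_0|)$) partitions each half-plane into countably many horn-shaped regions shrinking to $P_2$ — matches the paper's mechanism. You also correctly identify the $S$-half-map as the right object on the Sim\'o side. But there are two substantive problems.

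First, a matter of precision: calling the $\gamma<h$ case a ``symmetric homoclinic butterfly of $O^+$'' overstates what happens. Generically the unstable separatrix of $O^+$ merely returns to the cross-section $\Pi^+$ near $O^+$; it misses $W^s(O^+)$. The homoclinic butterfly is a codimension-one event occurring along a curve inside the half-plane, not throughout it. What the paper establishes (Proposition~\ref{th:Poincare_map}) is an explicit formula for the composed return map $T=T_4\circ T_3\circ T_2\circ T_1$ (or $\hat{T}=S\circ T$), with a small remainder $|\delta|^s\eta_{1,2}$, and the ``Lorenz versus Sim\'o'' distinction is simply whether the composition lands back on $\Pi^-$ or crosses to $\Pi^+$. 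No homoclinic bifurcation is invoked. Also, the paper does not reduce the Sim\'o case to the Lorenz case by a second iterate; $\hat{T}$ has exactly the same analytic form \eqref{eq:T} as $T$, so the two cases are handled uniformly.

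Second, and this is the genuine gap: you assert that the rescaled limit map ``carries a pseudohyperbolic Lorenz attractor by \cite{CTZ18}'' and, for the Sim\'o side, that the analogous claim ``can be shown.'' Neither is justified. The rescaled one-dimensional factor of the return map is $\bar{Y}=\sigma(-1+c|Y|^\nu)\,\mathrm{sign}(Y)$ with $\nu$ the saddle index of $O^\pm$; since $\nu=O(\sqrt{\mu})$ here, $\nu$ is small. The Afraimovich--Bykov--Shilnikov expansion condition $c\nu>1$ fails for bounded $c$ and small $\nu$, so the map is \emph{not} uniformly expanding and the geometric Lorenz model does not apply directly. The reference \cite{CTZ18} is a computer-assisted proof for the Shimizu--Morioka system at specific parameter values and is entirely unrelated to this mechanism — it is used in the paper for Theorem~\ref{th1}, which lives in a different part of the $(\gamma,\beta)$-plane. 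The singular hyperbolicity of $T$, $\hat T$ for small $\nu$ is precisely the content of Theorem~\ref{th7}, proved in the companion paper \cite{preprint2} by constructing a coordinate change that restores uniform expansion over an interval of $c$ for every $\nu\in(0,1)$. Without that (or the trivial case $\nu>1/2$), your proposal leaves the central analytic step unproved, both on the Lorenz and on the Sim\'o side.
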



We derive this theorem from Theorem \ref{th2} using Theorem \ref{th6} in Section \ref{Shilnikov_criterion}. In \cite{Sh81} L.P. Shilnikov proposed three cases of homoclinic butterfly bifurcations with additional degeneracy at an equilibrium state leading to the birth of the Lorenz attractor (see also \cite{Rob89,Rob92,Rych90,GKKK22}). Theorem \ref{th6} gives another criterion in the same spirit for the birth of Lorenz-like attractors. In our case, we have the figure-eight heteroclinic connection instead of a homoclinic butterfly and an additional degeneracy (multiple stable eigenvalues) at the equilibrium $O$. In Section \ref{Shilnikov_criterion} we study this heteroclinic bifurcation in a general setting (not related to system \eqref{FlowNormalFormGeneral}).

\newpage

\section{Pseudohyperbolic attractors}\label{Pseudohyperbolic_attractors}

In this section, we define what we mean by the Lorenz and Sim\'o attractors. We base our definitions on the geometric model of the Lorenz attractor by Afraimovich, Bykov, and Shilnikov \cite{ABS77,ABS82} and the notion of pseudohyperbolicity \cite{TS98,TS08}.

Pseudohyperbolicity is a key property that ensures the robustness of chaotic attractors. It is a weak version of hyperbolicity. For the purposes of this paper, it is enough to consider the notion of pseudohyperbolicity which is more restrictive than general \cite{CTZ18, GKT21}. Namely, we characterize pseudohyperbolic attractors by the existence of a splitting of the tangent bundle in a neighborhood of an attractor into a direct sum of two invariant continuous subbundles $E^{ss}$ and $E^{cu}$. The linearized system uniformly contracts any vector in $E^{ss}$, while in $E^{cu}$ it expands $k$-dimensional volumes, where $k$ is the dimension of $E^{cu}$. Pseudohyperbolicity allows the linearized system to contract some vectors in $E^{cu}$ (as opposed to hyperbolicity), but it requires that any possible contraction in $E^{cu}$ must be uniformly weaker than any contraction in $ E^{ss}$. The expansion of volumes in $E^{cu}$ implies that the top Lyapunov exponent is positive for every orbit of the attractor.

\begin{definition} A vector field $V$ in $\mathbb{R}^n$ is pseudohyperbolic in a strictly forward invariant domain $\mathcal{D} \subset \mathbb{R}^n$ if:
\begin{itemize}
\item[$(1)$] at each point $w\in\mathcal{D}$ there exists a pair of transversal subspaces $E^{cu}(w)$ and $E^{ss}(w)$,
depending continuously on the point, such that the families of these subspaces are invariant with respect to the derivative $DV_t$ of the time-$t$ map $V_t$ of the system, i.e., $$
DV_t\, E^{cu}(w) = E^{cu}(V_t(w)), \ \ \ DV_t\, E^{ss}(w) = E^{ss}(V_t(w)) \text{ for all } t \geq 0;
$$
\item[$(2)$] there exist constants $C_{1,2}> 0$, $\sigma_{1,2}> 0$ such that for each $w \in \mathcal{D}$ and all $t \geq 0$
\begin{equation}\label{eq:pseudo_1}
\| DV_t(w)|_{E^{ss}} \| < C_1e^{-\sigma_1 t},
\end{equation}
\begin{equation}\label{eq:pseudo_2}
\|DV_t(w)|_{E^{ss}}\| \cdot \| DV_t(w)|_{E^{cu}}\|^{-1} < C_2e^{-\sigma_2 t};
\end{equation}
\item[$(3)$] there exist constants $C_3 > 0$ and $\sigma_3> 0$ such that for each $w \in \mathcal D$ and all $t \geq 0$
\begin{equation}\label{eq:pseudo_3}
{\rm det}(DV_t(w)|_{E^{cu}}) >  C_3e^{\sigma_3 t}.
\end{equation}
\end{itemize}
\end{definition}

It follows from inequality \eqref{eq:pseudo_2} that the decomposition $E^{cu} \times E^{ss}$ is continuously preserved by small $C^1$-perturbations of the vector field and, by the continuity, the decomposition keeps satisfying conditions (2) and (3). Thus, the property of pseudohyperbolicity is $C^1$-persistent. This means that, robustly with respect to $C^1$-small perturbations, every orbit of any attractor in $\mathcal D$ has a positive top  Lyapunov exponent.

Following \cite{TS98}, we define an attractor as a completely stable chain-transitive invariant set. Namely, denote by $w(t;w_0)$ a trajectory of a vector field passing through a point $w_0$ at $t=0$. A set $\{w_0,w_1,\ldots, w_n\}$ is called an {\it $(\varepsilon,\tau)$-trajectory} if there exists a set of times $0=t_0<t_1<\ldots<t_n$ such that $|t_i-t_{i-1}|>\tau>0$ and $\|w_{i}-w(t_{i-1};w_{i-1})\|<\varepsilon$ for $i=1,\ldots n$. A closed invariant set $A$ is called {\it chain-transitive} if for any points $w_1, w_2\in A$ and for any $\varepsilon>0$, $\tau>0$, the set $A$ contains an $(\varepsilon,\tau)$-trajectory connecting $X_1$ and $X_2$. A compact invariant set $A$ is {\it completely stable} if for an open neighborhood $U$ of $A$ there exist constants $\varepsilon, \tau>0$ and a smaller neighborhood $\tilde{U}: A\subset \tilde{U}\subset U$ such that an $(\varepsilon,\tau)$-trajectory starting at $\tilde{U}$ does not leave $U$.

We now proceed to describe our geometric model. Let $\dot{w}=G(w)$ be a $3$-dimensional smooth vector field invariant with respect to the symmetry $S$. Assume that
\begin{itemize}
    \item[(A1)] there are two symmetric saddle equilibrium states $O^-$, $O^+$ lying on the $z$-axis;
    \item[(A2)] the eigenvalues $\lambda_1, \lambda_2$, and $\lambda_3$ of the equilibrium states $O^-$, $O^+$ satisfy
    $$
    \lambda_1>0>\lambda_2>\lambda_3, \ \ \ \lambda_1+\lambda_2>0,
    $$
    and the eigenspace corresponding to $\lambda_2$ coincides with the $z$-axis.
\end{itemize}
Assumptions A1 and A2 imply that the pseudohyperbolicity conditions are fulfilled at the points $O^-$ and $O^+$, where $E^{cu}$ is the two-dimensional eigenspace corresponding to the eigenvalues
$\lambda_1$ and $\lambda_2$ and $E^{ss}$ is the eigenspace corresponding to the eigenvalue $\lambda_3$. This allows us to assume that
\begin{itemize}
\item[(A3)] the vector field is pseudohyperbolic  with ${\rm dim}(E^{cu})=2$ and ${\rm dim}(E^{ss})=1$ in a bounded open domain $\mathcal{D}$ containing $O^-$ and $O^+$;
\item[(A4)] there are two symmetric two-dimensional cross-sections $\Pi^-$ and $\Pi^+$ which lie in $\mathcal D$ and are transversal to the stable manifolds of $O^-$ and $O^+$, respectively, such that the forward trajectory of any point in $\mathcal{D}\backslash (W^s(O^-)\cup W^s(O^+))$ transversely intersects $\Pi^-\cup \Pi^+$, see Figure~\ref{GeometricModels}.
\end{itemize}

Assumptions A3 and A4 imply that there are at most two attractors in $\mathcal D$. To explain this, recall that a point $w$ is said to be {\it attainable} from a point $w_0$ if there exists $\tau>0$ such that for any $\varepsilon>0$ there exists an $(\varepsilon,\tau)$-trajectory connecting the points $w_0$ and $w$. Denote by $\Lambda(O^-)$ and $\Lambda(O^+)$ the set of all points attainable from $O^-$ and $O^+$, respectively. The sets $\Lambda(O^-)$ and $\Lambda(O^+)$ are chain-transitive closed invariant sets. These sets can intersect and then, due to chain transitivity, they coincide. By similar arguments as in \cite{TS98,TS08}, it follows from A3 and A4 that the union of the stable manifolds $W^s(O^-)\cup W^s(O^+)$ is dense in $\mathcal D$. This means that from any point $w\in \mathcal D$ one can attain $O^-$ or $O^+$, and the sets $\Lambda(O^-)$ and $\Lambda(O^+)$ are the only possible chain-transitive completely stable invariant sets in $\mathcal D$.

Assumption A4 implies that the dynamics of the vector field are determined by the Poincar\'e map $T$ of $\Pi^-\cup \Pi^+$. Here we can have two different situations. Namely, denote by $\Pi^{-}_{1}$ and $\Pi^{-}_{2}$ (or $\Pi^{+}_{1}$ and $\Pi^{+}_{2}$) the two halves of $\Pi^-\backslash W^s(O^{-})$  (resp., $\Pi^+\backslash W^s(O^{+})$). By assumption A4, every forward trajectory of a point in $\Pi^-_1\cup\Pi^-_2$ intersects $\Pi^+\cup \Pi^-$ and the intersection point depends continuously on the initial point. Thus, all trajectories starting at $\Pi^-_1$ (or, by the symmetry, at $\Pi^-_2$)  either return to $\Pi^-$ and do not intersect $\Pi^+$, as in Fig.~\ref{GeometricModels}a, or they intersect $\Pi^+$ first, as in Fig.~\ref{GeometricModels}b.

In the first case, we say that the system satisfying assumptions A1-A4 has {\it a symmetric pair of Lorenz attractors}. Here, any forward trajectory starting at $\Pi^-_1\cup \Pi^-_2$  returns to $\Pi^-$ and does not intersect $\Pi^+$, see Fig.~\ref{GeometricModels}a. By the symmetry, the same holds for the cross-section $\Pi^+$.
The Lorenz attractors are the two disjoint sets $\Lambda(O^-)$ and $\Lambda(O^+)$ of all points attainable from the saddle equilibria $O^-$ and $O^+$, respectively. The absorbing domain $\mathcal{D}$ consists of two connected components --- the absorbing domains of the Lorenz attractors $\Lambda(O^-)$ and $\Lambda(O^+)$. 

In the second case, any forward trajectory starting at $\Pi^-_1\cup \Pi^-_2$ intersects $\Pi^+$ and, by the symmetry, any forward trajectory starting at $\Pi^+_1\cup \Pi^+_2$ intersects $\Pi^-$, see Fig. \ref{GeometricModels}b; the absorbing domain $\mathcal D$ is connected here. 
In this case, we say that the system has {\it a Sim\'o angel attractor}. It is possible in this case that $\Lambda(O^-)$ and $\Lambda(O^+)$ are disjoint, then these sets comprise a pair of two-winged Sim\'o attractors, as in Fig.\ref{Portrait_of_attractors}c. It is also possible for $\Lambda(O^-)$ and $\Lambda(O^+)$ to coincide, then the set $\Lambda(O^-)=\Lambda(O^+)$ is the four-winged Sim\'o angel, as in Fig.\ref{Portrait_of_attractors}b. To study the Sim\'o angels, it is also convenient to consider the half-map $\hat{T}=S\circ T$, which maps $\Pi^-$ and $\Pi^+$ into itself.

\begin{figure}[h]
    \centering
    \includegraphics[width=0.7\linewidth]{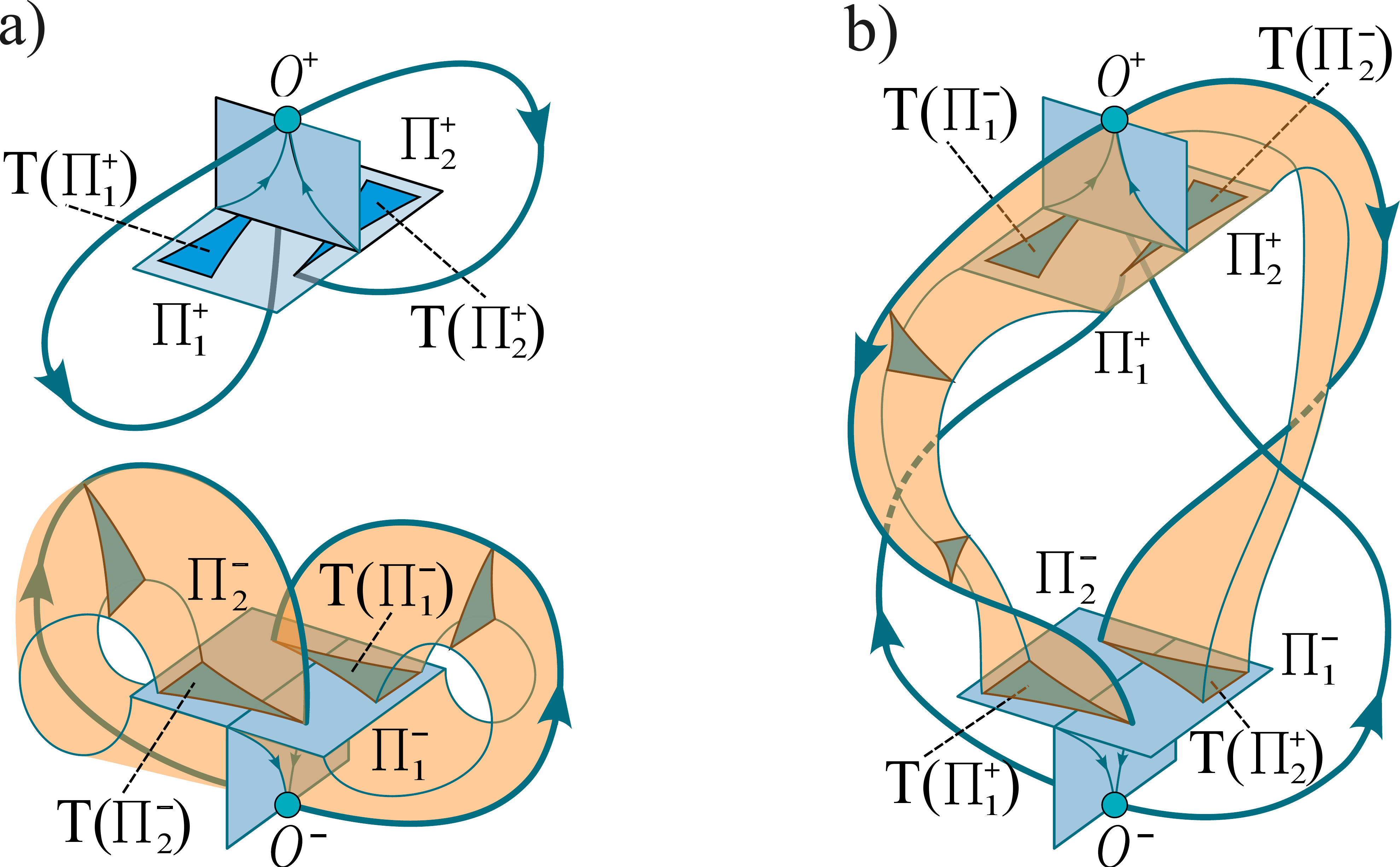}
    \caption{Geometry of the Poincar\'e map $T$ a) in the case of the Lorenz attractors, and b) in the case of the Sim\'o angel.}
    \label{GeometricModels}
\end{figure}

The assumption of pseudohyperbolicity for these attractors implies the hyperbolicity of the corresponding Poincar\'e maps (and vice versa, under assumption A2).

\begin{definition}\label{def:SingularHuyperbolicity}
The map $T$ is singular hyperbolic if:
\begin{itemize}
\item[$(a)$] there are a pair of continuous cone fields $C^{ss}$ and $C^{u}$ on $\Pi^-$ and $\Pi^+$ which complement each other, and the tangents to $W^s(O^-)$ and $W^{s}(O^+)$ lie in the interior of $C^{ss}$; \\
\item[$(b)$] the cones are closed, have a non-empty interior, and are invariant with respect to the derivative $DT$ of $T$:
$$
DT (C^{u}_P)\subset {\rm int}\, C^{u}_{T(P)}, \ \ \ C^{ss}_{T(P)}\subset {\rm int}\, DT(C^{ss}_P) \ \ \text{for} \ \ P\notin W^{s}(O^-)\cup W^{s}(O^+);
$$
\item[$(c)$] $DT$ uniformly contracts all vectors of $C^{ss}$ and uniformly expands all vectors of $C^u$.
     \end{itemize}
\end{definition}

Note that the cone conditions of this definition can be expressed as certain inequalities on the elements of the derivative matrix $DT$ of the Poincar\'e map, as it was done in the Afraimovich-Bykov-Shilnikov geometric model \cite{ABS77,ABS82}.

In Section \ref{Shilnikov_criterion}, we study bifurcations of the four-winged heteroclinic connection shown in Fig. \ref{HeteroclinicConnection}b. For each small $\mu$, this bifurcation corresponds to a point in the $(\alpha,\beta)$-parameter plane (the point $P_2$ in Fig. \ref{bifurcation_diagram}). We call this point a \textit{stem point} because, as we show, infinitely many regions corresponding to the existence of the Lorenz and Sim\'o attractors adjoin this point. Namely, we prove that for parameter values from these regions, one can choose two symmetric cross-sections satisfying assumption A4, and we derive explicit formulas for the Poincar\'e map. We show (see Proposition \ref{th:Poincare_map}) that the map $T$ for the Lorenz attractor and the map $\hat{T}$ for the Sim\'o angel are given by the following equations
\begin{equation}\label{eq:Poincare_map_1}
\begin{aligned}
\bar{X}&=(1+\varepsilon |Y|^{\nu}\,  \phi_1(X,Y))\,{\rm sign}(Y),\\
\bar{Y}&=\sigma \, (-1+|Y|^{\nu}\,(c+ \varepsilon\, \phi_2(X,Y)) )\, {\rm sign}(Y),
\end{aligned}
\end{equation}
where $\sigma$ is equal to $1$ or $-1$, the functions $\phi_{1,2}$ are smooth and bounded, and the coefficient $\varepsilon$ tends to $0$ as the parameters approach the stem point, the coefficient $c$ is a function of the bifurcation parameters whose values run a certain interval for which map \eqref{eq:Poincare_map_1} has the Lorenz attractor. The coefficient $\nu=-\lambda_2/\lambda_1$ is called the saddle index of the equilibria $O^{\pm}$; in system \eqref{FlowNormalFormGeneral}, $\nu$ is $O(\sqrt\mu)$ close to $0$. In the coordinates $(X,Y)$, the cross-section $\Pi=\Pi^-$ (or $\Pi^+$) is a square $\max(|X|,|Y|)<1+O(\varepsilon)$. The curve $W^s_{loc}(O^{\pm})\cap \Pi$ corresponds to the line $Y=0$. It is the discontinuity line of map \eqref{eq:Poincare_map_1}. When $Y\to \pm 0$, the image $(\bar{X},\bar{Y})$ tends to the points $M_1=(1,-\sigma)$ or $M_2=(-1,\sigma)$, where the unstable separatrices intersect $\Pi$.

For small $\varepsilon$ the map is strongly contracting in $X$. 
For $\nu<1$ it is strongly expanding in $Y$ for small $Y$, but may be contracting in $Y$ for $Y$ close to $1$. We study maps of type \eqref{eq:Poincare_map_1} in the upcoming paper \cite{preprint2}. We show there that for any $\nu>0$ there exists an interval of $c$ values for which the attractor of the corresponding map is singular hyperbolic in the sense of Definition~\ref{def:SingularHuyperbolicity}, i.e., there exists a transformation of the coordinate $Y$ such that the map becomes uniformly expanding in $Y$ and, thus, satisfies the Afraimovich-Bykov-Shilnikov conditions. In this way, we show that the Lorenz and Sim\'o attractors are born from the stem point.

As we mentioned earlier, small $\mu$ in the normal form \eqref{FlowNormalFormNumeric} corresponds to small $\nu$ in map~\eqref{eq:Poincare_map_1}. We show in \cite{preprint2} that for small $\nu$ the attractor of map \eqref{eq:Poincare_map_1} is transitive (i.e., there exists a trajectory that is dense in the attractor) and topologically mixing. It is the closure of the set of hyperbolic periodic points and contains an $\omega$-limit set of every point in $\Pi$.

\begin{figure}[h]
    \centering
    \includegraphics[width=0.4\linewidth]{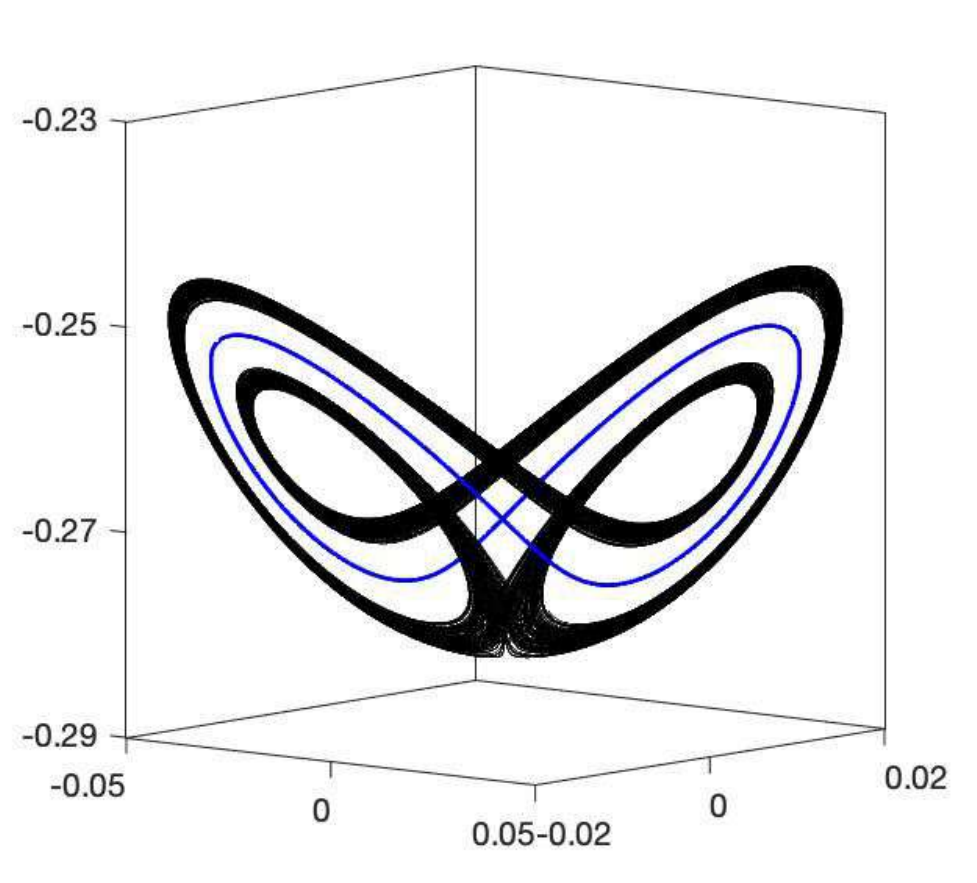}
    \caption{The Lorenz attractor with a trivial lacuna in system \eqref{FlowNormalFormNumeric} for $(\beta, \gamma, \mu) = (0.1775, 0.0475, 0.02)$. The lacuna contains a saddle periodic trajectory.}
    \label{Lorenz_with_lacunae}
\end{figure}

\begin{figure}[h]
    \centering
    \includegraphics[width=0.9\linewidth]{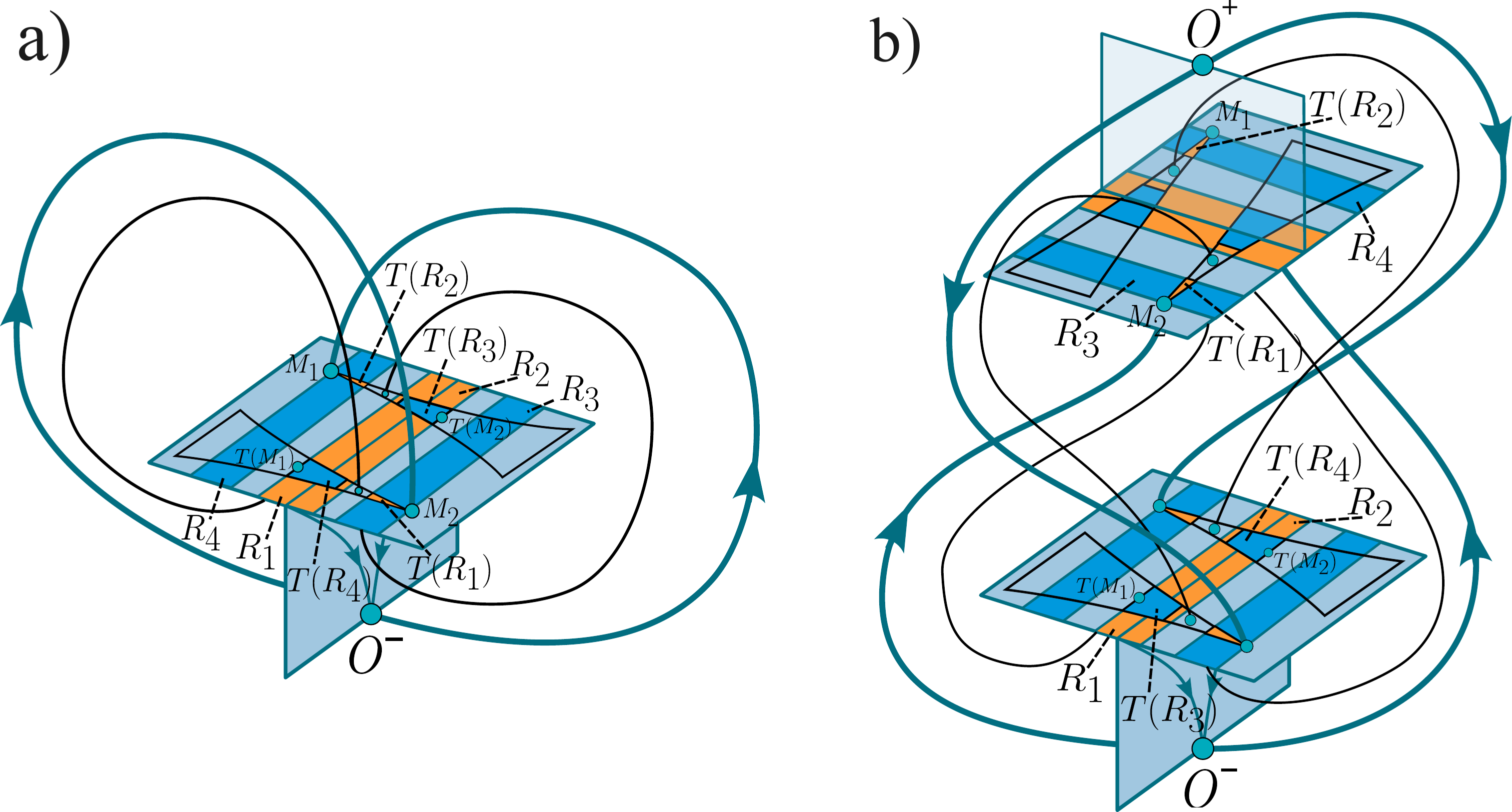}
    \caption{The Poincar\'e map a) for the Lorenz attractor with a trivial lacuna and b) for a pair of two-winged Sim\'o attractors. The points $M_1$ and $M_2$ correspond to the intersection of the unstable separatrices of $O^-$ with $\Pi^-$ or $\Pi^+$. In both cases, the rectangles $R_1$ and $R_2$ adjoin $W_{loc}^s(O^-)\cap \Pi^-$ and contain in their boundaries the points $T(M_1)$ and $T(M_2)$, respectively. The images $T(R_1)$ and $T(R_2)$ are contained in $R_3$ and $R_4$, respectively, so that $M_2$ and $T^2(M_1)$ lie in the boundary of $R_3$, whereas  $M_1$ and $T^2(M_2)$ lie in the boundary of $R_4$. The images $T(R_1)$ and $T(R_2)$ are contained in $R_3 \cup R_4$ and intersect $W_{loc}^s(O^-)\cup \Pi^-$, the points $T^3(M_1)$, $T^3(M_2)$ lie strictly inside $R_3 \cup R_4$.}
    \label{LorenzLacuna}
\end{figure}

It follows from the theory of \cite{preprint2} that for small $\mu$ near the stem point, the attractor on the cross-section consists of two connected components, one containing the point $M_1$ and the other containing the point $M_2$. However, with an increase of $\mu$ or when moving away from the stem point, the so-called lacuna may emerge in the attractor, see Fig. \ref{Lorenz_with_lacunae}. In the case of the symmetric pair of Lorenz attractors, this happens when a figure-eight saddle periodic orbit departs from each of the attractors. In this case, the dynamics of the Poincar\'e map can be described as follows, see Fig.~\ref{LorenzLacuna}a. In the cross-section $\Pi$ there are four rectangles: two of them, $R_1$ and $R_2$, adjoin the line $W^s_{loc}(O^{\pm}) \cap \Pi$ and the other two, $R_3$ and $R_4$, contain the points $M_1$ and $M_2$. The images of $R_1$ and $R_2$ are contained in $R_3$ and $R_4$, respectively. Each of the images $R_3$ and $R_4$ is contained in $R_1\cup R_2$ and intersects $W^s_{loc}(O^{\pm})$. So, the intersection of the attractor with $\Pi$ consists of four connected components, one in $R_3$, one in $R_4$, and two in $R_1\cup R_2$. Each of the attractors $\Lambda(O^-)$ and $\Lambda(O^+)$ is transitive and asymptotically stable. Every orbit, except for those in the stable manifold of the figure-eight periodic orbit, tend to the attractor.

In the case of the Sim\'o attractor, the creation of the lacuna is shown in Fig.~\ref{LorenzLacuna}b. Here, a four-round saddle periodic orbit departs from the four-winged Sim\'o attractor, and the attractor splits into a pair of two-winged Sim\'o attractors (containing $O^+$ and $O^-$, respectively). The dynamics of the half-map $\hat{T}$ are the same as for the map $T$ in the Lorenz case. Let us describe the dynamics of the Poincar\'e map $T$ in more detail. In this case, we can choose two rectangles $R_1, R_2\subset \Pi^-$ adjoining $W^s_{loc}(O^-) \cap \Pi^-$ and two rectangles $R_3, R_4\subset \Pi^+$ containing the points of the first intersection of the unstable separatrices of $O^-$ with $\Pi^+$, see Fig.\ref{LorenzLacuna}b. As in the Lorenz attractor case, the images of $R_1$ and $R_2$ are contained in $R_3$ and $R_4$, respectively. Each of the images $R_3$ and $R_4$ is contained in $R_1\cup R_2$ and intersects $W^s_{loc}(O^-)$. Each of the four rectangles contain four connected components of the intersection of the attractor $\Lambda(O^-)$ with $\Pi^-\cup \Pi^+$, and these rectangles, as well as the attractor $\Lambda(O^-)$, are separated from $W^s_{loc}(O^+)$. By the symmetry, there are also four rectangles which contain four connected components of the intersection of the other attractor $\Lambda(O^+)$ with $\Pi^-\cup \Pi^+$ and are separated from $W^s_{loc}(O^-)$. Both $\Lambda(O^-)$ and $\Lambda(O^+)$ are transitive and asymptotically stable and every orbit that does not tend to the four-round periodic orbit in the lacuna tends to one of these attractors.

In the general case, the Lorenz and Sim\'o attractors may have more complicated lacunae and more connected components of intersection with the cross-sections. It follows from the Lorenz attractor theory \cite{ABS77,ABS82} that lacunae contain a compact invariant hyperbolic set $\Sigma$, which is trivial (has finitely saddle periodic orbits) or equivalent to a suspension of a non-trivial finite Markov chain. When the non-trivial lacuna appears, the attractor may become non-transitive -- this happens when $\Sigma$ collides with the transitive component of the attractor. We have not seen non-trivial lacunae in system~\eqref{FlowNormalFormNumeric}, however they must exist here because, as we show (in Section~\ref{sec:Existence_of_LA}) the system is close to the Shimizu-Morioka model where non-trivial lacunae were found in \cite{Bob24}.

\section{Existence of the Lorenz attractor}\label{sec:Existence_of_LA}

Let us give conditions under which system~\eqref{FlowNormalFormNumeric} has a pair of symmetric Lorenz attractors. Suppose that system \eqref{FlowNormalFormGeneral} satisfies the non-degeneracy condition $a_0, b_0, b_1\neq 0$ and, moreover, we focus on the case
\begin{equation}\label{eq:coefficients_condition}
b_1<0, \ \ {\rm Re}  (a_0 b_0)<0, \ \ {\rm Im} (a_0 b_0)\neq 0.
\end{equation}
Due to the symmetry, the $z$-axis is an invariant line for system \eqref{FlowNormalFormGeneral}. The system restricted to this line has the form
$$\dot{z}=\mu\, z+b_1\, z^3+O(z^4).$$
When $\mu$ changes sign, a pitchfork bifurcation takes place on the $z$-axis. The sign of the coefficient $b_1$ determines the type of this bifurcation. For $b_1<0$ the equilibrium $O(0,0,0)$ undergoes the supercritical pitchfork bifurcation: for $\mu<0$ the equilibrium is stable on the $z$-axis,  and for $\mu>0$ it becomes unstable and two stable equilibria $O^+(0,0,z_0)$ and $O^-(0,0,-z_0)$ are born, where
$$
z_0=\sqrt{\frac{\mu}{|b_1|}}+O(\mu).
$$
Assuming $\mu>0$, we rescale the coordinates and time:
\begin{equation}\label{mu_scaling}
u=z_0 \sqrt[4]{|b_1|}\, \exp\left(\frac{i \, {\rm Arg}(a_0)}{2}\right) \, u_{new}, \;\;\; z=z_0 \, z_{new},\;\;\; t_{new}=z_0|a_0|\; t.
\end{equation}
After that we obtain the system
\begin{equation}\label{FlowNormalFormRescaled}
\begin{aligned}
\dot{u}&=(-\rho+i \omega )\, u+  z u^* +\sqrt{\mu}\, f_1(u,u^*,z) , \\
\dot{z}&=(A+iB)\, u^2+(A-iB)\, (u^*)^2 +\sqrt{\mu} \, f_2(u,u^*,z),
\end{aligned}
\end{equation}
where
\begin{equation}\label{eq:high_order_terms}
\begin{aligned}
f_1(u,u^*,z)&=\frac{a_1}{|a_0|}\,  u^2u^*+ \frac{a_2}{ |a_0|\, \sqrt{|b_1|}} \, z^2u+\frac{a_3\,|a_0|}{a_0^2} \, (u^*)^3+\sqrt{\mu} \, O_3,\\
f_2(u,u^*,z)&=\left(\frac{\sqrt{b_1}}{|a_0|}+O(\sqrt{\mu})\right)\, (z-z^3) +\frac{b_2}{|a_0|}\, z u u^*+\sqrt{\mu}\, O_3.
\end{aligned}
\end{equation}
The $O_3$-terms in \eqref{eq:high_order_terms} are such that system \eqref{FlowNormalFormRescaled} is $S$-symmetric; in particular, $f_1$ vanishes at $u=u^*=0$. Also, $f_2$ and $\partial f_2/\partial (u,u^*)$ vanish at the equilibrium states $(u,u^*,z)=(0,0,\pm 1)$ and $(u,u^*,z)=(0,0,0)$.

The new parameters $\rho$ and $\omega$ are equal to
\begin{equation}\label{eq:rho_omega_definition}
\rho=\frac{\gamma}{z_0  |a_0|}, \ \ \ \omega=\frac{\beta}{z_0 |a_0|},
\end{equation}
and are not necessarily small. We also have
$$
A+i B= a_0 b_0 \, \frac{\sqrt{|b_1|}}{|a_0|^2}.
$$
Note that by \eqref{eq:coefficients_condition}
$$
A<0, \ \ B\neq 0.
$$
Below, we study bifurcations of system \eqref{FlowNormalFormRescaled} in the parameter space $(\rho,\omega,\mu)$. Note that we rescaled the coordinates in such a way that in the new coordinates the equilibrium states lying on the $z$-axis do not move when the parameters change and are fixed $O(0,0,0)$, $O^+(0,0,1)$, $O^-(0,0,-1)$.

In this section, we prove that, for some open region of parameters $(\rho, \omega, \mu)$, system \eqref{FlowNormalFormRescaled} has two symmetric Lorenz attractors, one contains the equilibrium states $O^+$ and the other contains the equilibrium $O^-$. To do this, we show that for some parameter values, system \eqref{FlowNormalFormRescaled} is close to the Shimizu-Morioka model in a neighborhood of the equilibrium $O^+$ and, by the symmetry, in a neighborhood of the equilibrium $O^-$. The Shimizu-Morioka model is a system given by the following differential equations
\begin{equation}\label{SM_system}
\begin{aligned}
\dot{x}&=y,  \\
\dot{y}&=x(1-z) -\lambda y, \\
\dot{z}&=-\alpha z + x^2,
\end{aligned}
\end{equation}
where $\lambda$ and $\alpha$ are positive parameters.

\begin{theorem}\label{ThreeZeros}
Let $B\neq 0$ in system \eqref{FlowNormalFormRescaled}. Take any compact region $\mathbb K$ in the $(\alpha,\lambda)$-plane corresponding to positive $\alpha, \lambda$ and any ball $\mathbb B \subset \mathbb{R}^3$. Then, there exist constants $K_{1,2,3}$ such that for $\rho$ and $\omega$ satisfying
\begin{equation}\label{eq:omega_rho_equations}
\begin{aligned}
|\rho|\leq K_1 \sqrt{\mu}, \ \ \ \
K_2 \mu\leq \omega-{\rm sign}(B)-\frac{{\rm Im}\,(a_2)}{|a_0|\sqrt{|b_1|}}\sqrt{\mu}\leq K_3 \mu, \\
\end{aligned}
\end{equation}
there exists a transformation of coordinates, parameters, and time, smooth for $\mu>0$, which brings system \eqref{FlowNormalFormRescaled} to the form
\begin{equation}\label{eq:SM_plus_small_terms}
\begin{aligned}
\dot{x}&=y,  \\
\dot{y}&=x(1-z) -\lambda y+O(\sqrt{\mu}), \\
\dot{z}&=-\alpha z + x^2 +O(\sqrt{\mu}).
\end{aligned}
\end{equation}
Here, when $\mu$ is sufficiently small, the new coordinates $(x,y,z)$ run the ball $\mathbb B$ and the range of the new parameters $\alpha$ and $\lambda$ covers the region $\mathbb{K}$.
\end{theorem}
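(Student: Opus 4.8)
\textit{Proof strategy.} The plan is to localize the rescaled system \eqref{FlowNormalFormRescaled} near the equilibrium $O^+=(0,0,1)$ (the equilibrium $O^-$ being treated identically by the symmetry $S$), to exhibit a rescaling of the coordinates, of time, and of the parameters --- singular in $\sqrt\mu$ --- under which the leading part of the localized system is exactly the Shimizu--Morioka system \eqref{SM_system}, and then to check that (i) the neglected terms are $O(\sqrt\mu)$ uniformly on a prescribed ball, and (ii) as $(\rho,\omega)$ run over the set \eqref{eq:omega_rho_equations}, the resulting $(\alpha,\lambda)$ sweep out a prescribed compact region.

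\textit{Localization and diagonalization.} First I put $z=1+\zeta$ and expand near $\zeta=0$. Among the terms of \eqref{FlowNormalFormRescaled} carrying a factor $\sqrt\mu$, the only one that is linear in $(u,u^*)$ at $O^+$ comes from $a_2 z^2 u$ in $f_1$: at $z=1$ it contributes $\sqrt\mu\,\frac{a_2}{|a_0|\sqrt{|b_1|}}\,u$ to $\dot u$, which I absorb into the coefficient $(-\rho+i\omega)$. With this absorption, and with $(\rho,\omega)$ in the range \eqref{eq:omega_rho_equations}, the linear part of $\dot u$ at $O^+$ becomes $(-\hat\rho+i(\mathrm{sign}(B)+\hat\omega))\,u+u^*$ with $\hat\rho=O(\sqrt\mu)$ and $\hat\omega=O(\mu)$; the purpose of the $\frac{\mathrm{Im}(a_2)}{|a_0|\sqrt{|b_1|}}\sqrt\mu$ term in \eqref{eq:omega_rho_equations} is precisely to cancel the $\sqrt\mu$-order contribution of $a_2$ to the imaginary part, leaving only an $O(\mu)$ residue. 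I may assume $\mathrm{sign}(B)=1$ (the case $\mathrm{sign}(B)=-1$ reduces to it by the involution $u\leftrightarrow u^*$, which changes the signs of $\beta,\omega,B$ and the orientation of the interval $[K_2,K_3]$). Passing to $u=x_1+ix_2$, and then to the coordinates $p=x_1-x_2$, $q=x_1+x_2$ which diagonalize the nilpotent $2\times2$ block of the linearization at $O^+$, the localized system becomes
\begin{equation*}
\begin{aligned}
\dot p&=(\zeta-\hat\omega)\,q-\hat\rho\,p+[\,\cdot\,],\\
\dot q&=(2+\zeta+\hat\omega)\,p-\hat\rho\,q+[\,\cdot\,],\\
\dot\zeta&=B p^2-B q^2+2A pq-2\kappa\sqrt\mu\,\zeta+[\,\cdot\,],
\end{aligned}
\end{equation*}
where $\kappa=\sqrt{|b_1|}/|a_0|>0$, $A=\mathrm{Re}(a_0b_0)\sqrt{|b_1|}/|a_0|^2$, $B=\mathrm{Im}(a_0b_0)\sqrt{|b_1|}/|a_0|^2>0$, and each term of $[\,\cdot\,]$ is $\sqrt\mu$ times a monomial of degree $\ge2$ in $(p,q,\zeta)$ (in \eqref{FlowNormalFormRescaled} every term of degree $\ge3$ already carries a factor $\sqrt\mu$, and the two lower-degree $\sqrt\mu$-terms have been split off above). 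Only $B\ne0$ will be used in the leading-order analysis.

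\textit{Singular rescaling.} Set $\varepsilon=\sqrt\mu$ and, following \eqref{eq:omega_rho_equations}, write $\hat\rho=\varepsilon r$, $\hat\omega=\varepsilon^2 w$, with $r$ ranging over an interval controlled by $K_1$ and $w$ over $[K_2,K_3]$. Perform the rescaling
\[
p=\varepsilon^{5/2}P,\qquad q=\varepsilon^{3/2}Q,\qquad \zeta=\varepsilon^{2}Z,\qquad \tau=\varepsilon\,t .
\]
Substituting and dividing each equation by the appropriate power of $\varepsilon$, a power count shows that every term of $[\,\cdot\,]$, together with the subleading pieces of the main terms (such as $\zeta p$ and $\hat\omega p$ in the second equation and $Bp^2$, $2Apq$ in the third), becomes $O(\sqrt\mu)$, so that in the new variables the system reads
\begin{equation*}
\begin{aligned}
P'&=(Z-w)Q-rP+O(\sqrt\mu),\\
Q'&=2P-rQ+O(\sqrt\mu),\\
Z'&=-2\kappa Z-B Q^2+O(\sqrt\mu),
\end{aligned}
\end{equation*}
uniformly on compacta. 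Eliminating $P$ from the first two leading equations gives $Q''=-2rQ'+(2Z+n)Q$ with $n:=-2w-r^2$; note that $n>0$ is equivalent to $\hat\rho^2<-2\hat\omega$, i.e. to $O^+$ being a saddle, which is the regime of interest. Rescaling time once more by $\sqrt n$, replacing $Z$ by $-\tfrac2n Z$, and finally rescaling $Q$ so that the coefficient of $Q^2$ in the last equation becomes $1$ (possible since $B>0$) brings the leading system to exactly \eqref{SM_system}, with
\[
\lambda=\frac{2r}{\sqrt n},\qquad \alpha=\frac{2\kappa}{\sqrt n},\qquad n=-2w-r^2 .
\]
The composition of all these substitutions is a transformation of coordinates, time and parameters, smooth for $\mu>0$, under which \eqref{FlowNormalFormRescaled} takes the form \eqref{eq:SM_plus_small_terms}; the error terms remain $O(\sqrt\mu)$ uniformly on a fixed ball, because for each $(r,w)$ the substitution is a fixed invertible linear rescaling with coefficients depending continuously on $(r,w)$, which vary in a compact set.

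\textit{Covering $\mathbb K$ and $\mathbb B$, and the main difficulty.} Given $(\alpha,\lambda)\in\mathbb K$ with $\alpha,\lambda>0$, the equations $\sqrt n=2\kappa/\alpha$, $r=\lambda\kappa/\alpha$, $w=-(n+r^2)/2$ have a unique solution, and as $(\alpha,\lambda)$ runs over the compact set $\mathbb K$ the corresponding $(r,w)$ runs over a compact subset of $(0,\infty)\times(-\infty,0)$; choosing $K_1$ large and $[K_2,K_3]$ a sufficiently wide negative interval makes \eqref{eq:omega_rho_equations} admit exactly these $(r,w)$, so the range of the parameter map covers $\mathbb K$. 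For the coordinates, the preimage of a fixed ball $\mathbb B$ under the rescaling is a neighbourhood of $O^+$ of size $O(\mu^{3/4})$, which for small $\mu$ lies inside the region where the local expansion near $O^+$ is valid, so the new coordinates do run over $\mathbb B$. The main difficulty is the power count in the rescaling step: one must make sure that no term of the localized system produces, after the singular rescaling, a contribution of order $\varepsilon^{0}$ or lower. The only dangerous term is the linear-in-$u$ part of $a_2 z^2 u$ at $z=1$; without its absorption it would contribute a term of order $\varepsilon^{-1}$ to the $P$-equation, and it is exactly the requirement that this be absorbed --- so that the effective frequency differs from $\mathrm{sign}(B)$ only by $O(\mu)$, not $O(\sqrt\mu)$ --- that fixes the precise form of the middle condition in \eqref{eq:omega_rho_equations}. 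Every other term is of order $\varepsilon$ or higher, which is what makes the reduction to \eqref{eq:SM_plus_small_terms} robust.
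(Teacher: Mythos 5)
Your proof takes essentially the same route as the paper: shift to the degenerate equilibrium $O^\pm$, recognize that near the triple-zero parameter value the linearization has a nilpotent $2\times2$ block plus a weakly decaying $z$-direction, pass to the second-order-ODE form for the center coordinate, and apply a singular rescaling by powers of $\sqrt\mu$ to land on the Shimizu--Morioka system plus $O(\sqrt\mu)$ corrections. The only cosmetic difference is bookkeeping: the paper introduces $y_{new}=\dot x$ directly and rescales by powers of the small quantity $\mu_1 = 1-(\omega+D\sqrt\mu)^2-(\rho-C\sqrt\mu)^2+O(\mu)$, whereas you first diagonalize the nilpotent block via $p=x_1-x_2$, $q=x_1+x_2$, rescale by $\varepsilon=\sqrt\mu$ with fixed exponents, and then eliminate $P$; since $\mu_1 \sim n\mu$ with $n=-2w-r^2$, the two scalings coincide and your $\lambda=2r/\sqrt n$, $\alpha=2\kappa/\sqrt n$ agree with the paper's $\lambda=-\mu_2/\sqrt{\mu_1}$, $\alpha=2\sqrt{|b_1|}\sqrt\mu/(|a_0|\sqrt{\mu_1})$. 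Your identification of the $a_2 z^2 u$ term as the sole danger (producing an $\varepsilon^{-1}Q$ contribution in the $P$-equation unless it is absorbed into the frequency) is exactly the reason the $\sqrt\mu$-tuning of $\omega$ appears in \eqref{eq:omega_rho_equations}, and this is also what is implicit in the paper's computation of $\mu_1$.

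One remark you should be aware of: as you compute, absorption of the $a_2$ term shifts the effective frequency to $\omega+D\sqrt\mu$ with $D=\mathrm{Im}(a_2)/(|a_0|\sqrt{|b_1|})$, so for this to equal $\mathrm{sign}(B)+O(\mu)$ you need $\omega-\mathrm{sign}(B)+D\sqrt\mu=O(\mu)$. Condition \eqref{eq:omega_rho_equations} as written has the opposite sign in front of the $D\sqrt\mu$ term, which would leave $\hat\omega=2D\sqrt\mu+O(\mu)$ and spoil the scaling $\hat\omega=\varepsilon^2 w$. You assert the stated condition does produce an $O(\mu)$ residue, which it does not as literally written; but the paper's own formula at the end of Section~3, $\beta=\sqrt\mu\,\frac{|a_0|}{\sqrt{|b_1|}}(\mathrm{sign}(\mathrm{Im}(a_0b_0))-\frac{\mathrm{Im}(a_2)}{|a_0|\sqrt{|b_1|}}\sqrt\mu+O(\mu))$, translates to $\omega=\mathrm{sign}(B)-D\sqrt\mu+O(\mu)$ and confirms your cancellation mechanism with the opposite sign of \eqref{eq:omega_rho_equations}. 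So this is a typo inherited from the theorem statement rather than a flaw in your argument, but your proof as presented does not flag that the stated sign and the claimed cancellation are inconsistent; it would be better to derive the correct sign from the absorption rather than quote and rationalize the printed one.

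Otherwise your power count is correct: with $p\sim\varepsilon^{5/2}$, $q\sim\varepsilon^{3/2}$, $\zeta\sim\varepsilon^2$, $\tau=\varepsilon t$, the surviving leading terms $-BQ^2$ and $-2\kappa Z$ in the $Z$-equation, and the nilpotent coupling $2P$ in the $Q$-equation, reproduce the Shimizu--Morioka structure, and the time/amplitude rescalings by $\sqrt n$ and $\sqrt{2B}/n^{3/4}$ (well defined because $B\ne0$) normalize the coefficients; the requirement $n>0$ restricts $w$ to a negative interval $[K_2,K_3]$, matching the sign structure of the paper's $\mu_1>0$. Your remarks on covering $\mathbb K$ and $\mathbb B$ are correct and at the same level of rigor as the paper's.
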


\begin{proof}
It was shown in \cite{SST93} that the Shimizu-Morioka system is a normal form for a certain local bifurcation of an equilibrium state with triple zero eigenvalues. To prove the theorem we will find values of the parameters when the equilibrium $O^-$ and $O^+$ undergo a degenerate version of such bifurcation. Then, by performing similar calculations as in \cite{SST93}, we bring system \eqref{FlowNormalFormRescaled} to form \eqref{eq:SM_plus_small_terms}.

In system \eqref{FlowNormalFormRescaled}, shift the origin to the point $O^-(0,0,-1)$ and obtain the system
\begin{equation}\label{eq:shifted_system}
\begin{aligned}
\dot{u}&=(-\rho+C\sqrt{\mu}+i (\omega +D\sqrt{\mu})+O(\mu))\, u+(1+O(\mu))\, u^*+ u^* z + \sqrt{\mu}\, O_2, \\
\dot{z}&=\left(-\frac{2\sqrt{b_1}}{|a_0|}\, \sqrt{\mu}+O(\mu)\right)\, z +(A+iB)\, u^2+(A-iB)\, (u^*)^2  +\sqrt{\mu}\, O_2,
\end{aligned}
\end{equation}
where we denote
\begin{equation}\label{eq:CD_def}
C+iD= \frac{a_2}{|a_0|\, \sqrt{|b_1|}}.
\end{equation}
The constants $O(\mu)$ come from the $\sqrt{\mu}$-terms in the functions $f_{1,2}$ of \eqref{eq:high_order_terms}. Note that system \eqref{eq:shifted_system} is symmetric with respect to $u\to -u$. Thus, the quadratic terms in the first equation of \eqref{eq:shifted_system} vanish at $z=0$.

Rewrite system \eqref{eq:shifted_system} in real variables $(x,y,z)$, where $u=x+i y$:
\begin{equation}\label{FlowNormalFormRescaledO+}
\begin{aligned}
\dot{x}&=(1-\rho+C\sqrt{\mu}+O(\mu) )\, x - (\omega+D\sqrt{\mu}+O(\mu)) \, y+ zx+\sqrt{\mu}\, (z\, O_1+O_3),  \\
\dot{y}&=(\omega+D\sqrt{\mu}+O(\mu))\, x + (-1-\rho+C\sqrt{\mu}+O(\mu)) \, y -zy+\sqrt{\mu}\, (z\, O_1+O_3), \\
\dot{z}&=\left(-\frac{2\sqrt{b_1}}{|a_0|}\, \sqrt{\mu}+O(\mu)\right)\, z + 2A\,(x^2-y^2)-4B\, xy+\sqrt{\mu}\, O_2.
\end{aligned}
\end{equation}
The linearization matrix of this system at the origin equals to
$$
\begin{pmatrix}
1-\rho+C\sqrt{\mu}+O(\mu) & -\omega-D\sqrt{\mu}-O(\mu) & 0 \\
\omega+D\sqrt{\mu}+O(\mu) & -1-\rho+C\sqrt{\mu}+O(\mu) & 0 \\
0 & 0 & -\frac{2\sqrt{b_1}}{|a_0|}\, \sqrt{\mu}+O(\mu)
\end{pmatrix}.
$$
For $\mu=\rho=0$ and $\omega=\pm 1$, all three eigenvalues of this matrix are zero.  We consider the situation where $\mu$ is small and, according to \eqref{eq:omega_rho_equations}, $\rho=O(\sqrt{\mu})$ and $\omega={\rm sign}(B)+O(\sqrt{\mu})$. Introduce a new variable
$$
y_{new}=\dot{x}=(1-\rho+C\sqrt{\mu}+O(\mu))\, x - (\omega+D\sqrt{\mu}+O(\mu)) \, y+ zx+\sqrt{\mu}\, (z\, O_1+O_3).
$$
Then system \eqref{FlowNormalFormRescaledO+} takes the form
\begin{equation}\label{FlowNormalFormRescaledO+2}
\begin{aligned}
\dot{x}&=y,  \\
\dot{y}&=\mu_1\,  x +\mu_2\, y+ 2zx+O_3+\sqrt{\mu}\,  z\, O_1, \\
\dot{z}&=\left(-\frac{2\sqrt{b_1}}{|a_0|}\, \sqrt{\mu}+O(\mu)\right)\, z -4|B|\, x^2 -2A\, y^2+4(A+|B|)\, xy+O_3+\sqrt{\mu}\, O_2,
\end{aligned}
\end{equation}
where we have two new small parameters $\mu_1$ and $\mu_2$ given by the formulas
\begin{equation}\label{eq:mu12}
\begin{aligned}
\mu_1&=
\begin{vmatrix}
1-\rho+C\sqrt{\mu}+O(\mu) & \omega+D\sqrt{\mu}+O(\mu) \\
\omega+D\sqrt{\mu}+O(\mu) & 1+\rho-C\sqrt{\mu}-O(\mu) \\
\end{vmatrix}=1-(\omega+D\sqrt{\mu})^2-(\rho-C\sqrt{\mu})^2+O(\mu), \\
\mu_2&=-2\rho+2C\sqrt{\mu}+O(\mu)= -2\rho+O(\sqrt{\mu}).
\end{aligned}
\end{equation}
Next, we put
$$
\lambda=-\frac{\mu_2}{\sqrt{\mu_1}}, \ \  \alpha=\frac{2\sqrt{b_1}}{|a_0|}\,\frac{\sqrt{\mu}}{\sqrt{\mu_1}}.
$$
It is easy to see from \eqref{eq:mu12} that by choosing the constants $K_1, K_2, K_3$ in \eqref{eq:omega_rho_equations} we can make $\alpha,\lambda$ run over any given region $\mathcal{D}$ if $\mu$ is small enough.

We complete the proof by noting that the following rescaling of coordinates and time brings system \eqref{FlowNormalFormRescaledO+2} to the required form \eqref{eq:SM_plus_small_terms}:
\begin{equation}\label{last_scaling}
x=\frac{\mu_1^{3/4}}{\sqrt{8|B|}}\, x_{new}, \ \ y=\frac{\mu_1^{5/4}}{\sqrt{8|B|}}\, y_{new}, \ \ z=-\frac{\mu_1}{2}\,  z_{new}, \ \ t_{new}=\sqrt{\mu_1}\, t.
\end{equation}

\end{proof}

As we mentioned above, since the Shimizu-Morioka system possesses the Lorenz attractor for an open region of the parameters $\alpha$, $\lambda$ \cite{S93,CTZ18}, and the Lorenz attractor is preserved by any $C^1$-small perturbation of a system \cite{ABS77,ABS82}, we obtain that system \eqref{eq:SM_plus_small_terms} has the Lorenz attractor for some region of parameters $\gamma,\beta$ for all sufficiently small $\mu>0$.

According to \eqref{eq:rho_omega_definition} and \eqref{eq:omega_rho_equations}, the Lorenz attractor exists in system \eqref{FlowNormalFormGeneral} for all small $\mu>0$ and some region of $\gamma,\beta$ satisfying
$$
\gamma =O(\mu), \ \ \ \beta =\sqrt{\mu}\,  \frac{|a_0|}{\sqrt{|b_1|}}\, \left({\rm sign}({\rm Im}(a_0 b_0)) -\frac{{\rm Im}(a_2)}{|a_0|\sqrt{|b_1|}}\, \sqrt{\mu}+O(\mu)\right).
$$

{\bf Remark 1.} {\it In the case $b_1>0$, in system \eqref{FlowNormalFormGeneral} the equilibrium states $O^+$ and $O^-$ are born for $\mu<0$. For parameter values $(\rho,\omega,\mu)$ close to the point $(0,{\rm sign}(B),0)$, applying the same arguments as in the proof of Theorem \ref{ThreeZeros}, we can transform  system \eqref{FlowNormalFormGeneral} to the form
$$
\begin{aligned}
\dot{x}&=y,  \\
\dot{y}&=x(1-z) +\lambda y+O(\sqrt{|\mu|}), \\
\dot{z}&=\alpha z - x^2 +O(\sqrt{|\mu|}),
\end{aligned}
$$
where $\alpha$ is positive. After the transformation $t\to -t, \ \ y\to -y$, this system takes form \eqref{eq:SM_plus_small_terms}. Hence, a pair of Lorenz repellers is born in this case.}

{\bf Remark 2.} {\it The normal form \eqref{FlowNormalFormGeneral} also serves as the normal form for the bifurcation of a fixed point of a diffeomorphism with multipliers $(-1,i,-i)$. It means that there are coordinates in a neighborhood of the fixed point such that the diffeomorphism is close to the composition of the symmetry $S$ and the time-one map generated by system \eqref{FlowNormalFormGeneral}. In particular, the H\'enon map
\begin{equation}\label{HenonMap}
    \bar{x}=y,\;\;\;
    \bar{y}=z,\;\;\;
    \bar{z}=M_1+Bx+M_2y-z^2.
\end{equation}
has the point $\left(\frac{1}{2},\frac{1}{ 2},\frac{1}{2}\right)$ with  multipliers $(-1,i,-i)$ at $M_1=\frac{7}{4}$, $M_2=-1$, $B=-1$, and system \eqref{FlowNormalFormNumeric} is the normal form for the corresponding bifurcation.

Theorem \ref{ThreeZeros} implies the emergence of the discrete Lorenz attractors (under certain conditions on the coefficients of the normal form) as a result of the bifurcation of a fixed point with multipliers $(-1,i,-i)$. In particular, this gives an analytic proof of the existence of the discrete Lorenz attractors near such bifurcation in the 3D H\'enon map \eqref{HenonMap}, see \cite{preprint}.}

\section{Four-winged heteroclinic connection}\label{sec4}

In this section, we show that if $A<0$ in system \eqref{FlowNormalFormRescaled}, then there exists a bifurcation surface $\rho=h(\mu,\omega)$ corresponding to the four-winged heteroclinic connection such as shown in Fig.~\ref{HeteroclinicConnection}a.

\begin{theorem}\label{th5}
Let $A<0$ in system \eqref{FlowNormalFormRescaled}. Then in the parameter space $(\rho,\omega,\mu>0)$ there exists a bifurcation surface
\begin{equation}\label{eq:het_curve}
\rho=1/2+k_1\,\sqrt{\mu}+k_2\, \omega+O(\mu+\omega^2),
\end{equation}
where
$$
k_1=\frac{\sqrt{b_1}}{12|a_0|}+ \left({\rm Re}(a_1)+\frac{{\rm Im}(a_1) B}{3A} \right)\,\frac{1}{12|A|\,|a_0|}+\left({\rm Re}(a_2)-\frac{{\rm Im}(a_2) B}{6 A}\right)\, \frac{1}{3|a_0|\,\sqrt{|b_1|}}+
$$
$$
+\left( {\rm Re}(a_3/a_0^2)+\frac{{\rm Im}(a_3/a_0^2)B}{3A}\right)\, \frac{|a_0|}{12|A|}+\frac{b_2}{24|A||a_0|},
$$
$$
k_2=\frac{(\pi^2-9)B}{3A}.
$$
such that system \eqref{FlowNormalFormRescaled}, \eqref{eq:high_order_terms} has an $S$-symmetric four-winged heteroclinic connection such that the one-dimensional unstable separatrices of the equilibrium states $O^+$ and $O^-$ tend to the equilibrium state $O$ as $t\to+\infty$, see Fig. \ref{HeteroclinicConnection}. As $\rho$ changes, the heteroclinic connections split with a non-zero velocity.
\end{theorem}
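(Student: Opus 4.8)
The plan is to exhibit the four-winged connection explicitly at the distinguished parameter value $(\rho,\omega,\mu)=(1/2,0,0)$ and then to continue it by a Melnikov-type splitting argument. At $\omega=\mu=0$ the plane $\{y=0\}$ (that is, $u=u^*$) is invariant for \eqref{FlowNormalFormRescaled}, and the restricted flow is the integrable planar system $\dot x=(z-\rho)x$, $\dot z=2Ax^2$ with first integral $H_\rho=(z-\rho)^2-2Ax^2$. On the level $H_{1/2}=1/4$ one reads off the heteroclinic orbit $\Gamma_0$: $x(t)=\frac{1}{2\sqrt{2|A|}}\,\mathrm{sech}(t/2)$, $z(t)=(1+e^{t})^{-1}$, which leaves $O^{+}=(0,0,1)$ as $t\to-\infty$ along its one-dimensional unstable direction and tends to $O=(0,0,0)$ along the stable ($u$-)eigendirections as $t\to+\infty$; it is convenient that on $\Gamma_0$ one has $\dot z=-z(1-z)$ and $x^{2}=z(1-z)/(2|A|)$. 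Since $S\colon(u,z)\mapsto(iu,-z)$ and $S^{2}\colon(u,z)\mapsto(-u,z)$ are symmetries commuting with the flow, $S$ carries $\Gamma_0$ to the corresponding orbit of $O^{-}$ and $S^{2}$ carries it to the second unstable separatrix of $O^{+}$; hence the connection of Fig.~\ref{HeteroclinicConnection} exists as soon as a single scalar condition holds, namely that one unstable separatrix of $O^{+}$ land on $W^{s}(O)$.

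Next I would introduce the split function. For $(\rho,\omega,\mu)$ near $(1/2,0,0)$ the one-dimensional manifold $W^{u}(O^{+})$ (carried by the eigenvalue close to $1/2$, which stays bounded away from $0$) and the stable manifold $W^{s}(O)$ of $O$ (two-dimensional for $\mu>0$, and equal to the persistent strong stable manifold $W^{ss}(O)$ at $\mu=0$) depend smoothly on $(\sqrt\mu,\omega,\rho)$; note that the $z$-axis is invariant for all parameter values, so $O$ and $O^{\pm}$ do not move. Fixing a cross-section $\Sigma$ through $\Gamma_0(0)=(x(0),0,1/2)$ transverse to $\Gamma_0$, where $\nabla H_{1/2}\neq 0$, let $\Delta(\sqrt\mu,\omega,\rho)$ be the signed distance along $\nabla H_{1/2}$ from $W^{u}(O^{+})\cap\Sigma$ to $W^{s}(O)\cap\Sigma$, so $\Delta(0,0,1/2)=0$. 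The Melnikov identity gives $\Delta=|\nabla H_{1/2}(\Gamma_0(0))|^{-1}\int_{-\infty}^{+\infty}\dot H_{1/2}\,dt+(\text{higher order})$, the integrand being computed, to first order in $(\delta,\omega,\sqrt\mu)$ with $\delta=\rho-\tfrac12$, along $\Gamma_0$; the integral converges because the integrand decays exponentially at both ends. Differentiating $H_{1/2}$ along the full field \eqref{FlowNormalFormRescaled} one finds, after the $z$-dependent terms cancel,
\[
\dot H_{1/2}=4A\,\delta\,x^{2}-8B\,(z-\tfrac12)\,xy+2(z-\tfrac12)\sqrt\mu\,f_2-4A\,x\sqrt\mu\,\mathrm{Re}(f_1)+O(\omega^{2}+\mu),
\]
where $y=y(t)$ is the response forced by $\dot y+(\tfrac12+z)y=\omega x+\sqrt\mu\,\mathrm{Im}(f_1)$ along $\Gamma_0$, so $y=O(\omega)+O(\sqrt\mu)$.

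The term $4A\delta x^{2}$ yields $\partial_\rho\Delta|_{(0,0,1/2)}$ proportional to $4A\int_{-\infty}^{\infty}x(t)^{2}\,dt=4A\cdot\frac{1}{2|A|}=-2\neq 0$; this both shows that the connections split with nonzero velocity in $\rho$ and, since all objects are smooth in $\sqrt\mu$, lets the implicit function theorem produce a smooth surface $\rho=h(\sqrt\mu,\omega)$ with $h(0,0)=1/2$ solving $\Delta=0$. Its Taylor coefficients are then $k_1=-\tfrac12\,\partial_{\sqrt\mu}\!\left(\int\dot H_{1/2}\,dt\right)$ and $k_2=-\tfrac12\,\partial_{\omega}\!\left(\int\dot H_{1/2}\,dt\right)$. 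For $k_1$ the relevant integrals are those of $x\,\mathrm{Re}(f_1)$ and $(z-\tfrac12)f_2$ evaluated on $\Gamma_0$ (using \eqref{eq:high_order_terms}), together with $(z-\tfrac12)x\,y_\mu$; since $\dot z=-z(1-z)$ on $\Gamma_0$, the first two reduce to elementary integrals $\int_0^1 z^{a}(1-z)^{b}\,dz$ and produce the $\mathrm{Re}(a_i)$, $b_1$ and $b_2$ terms of the stated $k_1$ (e.g. $\int x^{4}\,dt=\frac{1}{24A^{2}}$ gives the $\mathrm{Re}(a_1),\mathrm{Re}(a_3/a_0^{2})$ terms and $\int x^{2}z^{2}\,dt=\frac{1}{6|A|}$ gives the $\mathrm{Re}(a_2)$ term), while the forced $y_\mu$ fed into $-8B(z-\tfrac12)xy$ produces the $\mathrm{Im}(a_i)B/A$ terms. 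For $k_2$ the only first-order contribution is $-8B\int(z-\tfrac12)x\,y_\omega\,dt$ with $y_\omega$ the solution of the forced equation; writing $e^{-\int_s^{t}(\tfrac12+z)}$ through $\mathrm{sech}$ and $\tanh$ and integrating by parts brings in $\int s^{2}\,\mathrm{sech}^{2}s\,ds=\pi^{2}/6$ together with rational pieces, which assemble into $k_2=(\pi^{2}-9)B/(3A)$.

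I expect the main obstacle to be precisely the two first-order contributions containing the forced coordinate $y(t)$: one must first solve the nonautonomous linear equation for $y$ along $\Gamma_0$ and then evaluate the resulting convergent double integrals, which is where $\pi^{2}$ and the $B/A$-corrections enter; the remaining $k_1$-integrals are mechanical through the substitution $dt=-dz/(z(1-z))$. A secondary technical point is justifying the smooth dependence of $W^{u}(O^{+})$ and $W^{s}(O)$, hence of $\Delta$, on $\sqrt\mu$ down to $\mu=0$, where $O$ and $O^{\pm}$ acquire a zero eigenvalue in the $z$-direction: this uses that the manifolds entering the split function are the strongly (un)stable ones, associated with eigenvalues bounded away from zero, and that the $z$-axis is a permanent invariant curve, so the degeneracy of the center behaviour at $\mu=0$ does not obstruct the continuation.
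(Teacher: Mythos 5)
Your proposal is correct in outline but takes a genuinely different route from the paper. You measure the splitting via the Melnikov function $\int\dot H_{1/2}\,dt$ built from the first integral $H_\rho=(z-\rho)^2-2Ax^2$ of the unperturbed planar flow on $\{y=0\}$, and then invoke the implicit function theorem to solve $\Delta=0$. The paper instead sets $u=x_0+U$, $z=z_0+Z$, writes the variational system around $\Gamma_0$, constructs the full fundamental matrix $\Phi(t)$ explicitly, turns the heteroclinic existence problem into a system of integral equations via variation of constants, proves unique solvability by a contraction mapping in a weighted space, and reads off the bifurcation surface from the solvability condition $\int_{-\infty}^{\infty}G_2\,ds=0$. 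The two are consistent (the paper's $G_2$ is, up to a scalar factor, your $\dot H_{1/2}/|\nabla H|$, since $\nabla H$ is the bounded solution of the adjoint variational equation), but they buy different things: the paper's route constructs the heteroclinic orbit outright and provides a clean framework for higher-order corrections without worrying about the validity of a Melnikov formula in 3D with a degenerating center eigenvalue; your route is shorter, the Melnikov kernel is available for free from $H$, and the decomposition of the first-order integrand into the $f_1$, $f_2$, and forced-$y$ pieces makes the structure of $k_1$, $k_2$ transparent. The technical caveat you flag — smooth dependence on $\sqrt\mu$ down to $\mu=0$ in spite of the vanishing $z$-eigenvalues at $O$ and $O^\pm$ — is real and your resolution (the manifolds entering $\Delta$ are the strong (un)stable ones, carried by eigenvalues bounded away from zero, and the $z$-axis stays invariant) is the right one; this is precisely what the paper's contraction argument circumvents by never invoking the invariant-manifold theorem for $O$ directly.

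One small correction: with your sign conventions the leading term is $\partial_\rho\int\dot H_{1/2}\,dt=4A\int x_0^2\,dt=-2$, so the IFT gives $k_1=+\tfrac12\,\partial_{\sqrt\mu}\bigl(\int\dot H_{1/2}\,dt\bigr)$ and $k_2=+\tfrac12\,\partial_{\omega}\bigl(\int\dot H_{1/2}\,dt\bigr)$; your stated formulas carry an extraneous minus sign. (With the correct sign the elementary integrals you quote, $\int x_0^4\,dt=\tfrac{1}{24A^2}$, $\int x_0^2z_0^2\,dt=\tfrac{1}{6|A|}$, $\int(z_0-\tfrac12)(z_0-z_0^3)\,dt=\tfrac{1}{12}$, do reproduce the paper's $\mathrm{Re}(a_i)$, $\sqrt{b_1}$ and $b_2$ contributions to $k_1$.) The remaining $\mathrm{Im}(a_i)B/A$ pieces of $k_1$ and the $(\pi^2-9)B/(3A)$ value of $k_2$ require solving the first-order linear ODE for the forced $y$ and evaluating the resulting double integrals, which you correctly identify as the only non-mechanical step; you do not carry this out, so the match to the stated $k_2$ is asserted rather than proved, but the structure is right.
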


\begin{proof}
Consider system \eqref{FlowNormalFormRescaled}, \eqref{eq:high_order_terms} at $\omega=\mu=0$ and write it in real variables:
\begin{equation}\label{eq:system_rho_mu_zero}
\begin{aligned}
\dot{x}&=-\rho\, x + zx,  \\
\dot{y}&= - \rho \, y -zy, \\
\dot{z}&=2A\, (x^2-y^2)-4B\, xy.
\end{aligned}
\end{equation}
The coordinate planes $x=0$ and $y=0$ are invariant with respect to this system. The restriction of system \eqref{eq:system_rho_mu_zero} to the plane $y=0$ is
\begin{equation}\label{eq:system_rho_mu_zero_restriction}
\begin{aligned}
\dot{x}&=-\rho\,  x + zx,  \\
\dot{z}&=2A\, x^2.
\end{aligned}
\end{equation}
It is easy to check that the function $E(x,z)=(z-\rho)^2-2A\, x^2$ is an integral of system \eqref{eq:system_rho_mu_zero_restriction}. For $A<0$ the curves $E(x,z)=const$ are ellipses. When $\rho=\frac{1}{2}$ the ellipse $E(x,y)=\frac{1}{4}$ passes throw the points $O^+$ and $O$. Thus, the unstable separatrices of $O^+$ tend to $O$ as $t\to+\infty$. Namely, it is easy to see that
$$
x_0(t)=\frac{e^{t/2}}{\sqrt{2|A|}\, (1+e^t)}, \ \ y_0(t)=0, \ \ z_0(t)=\frac{1}{1+e^{t}}, \ \ t\in(-\infty,+\infty),
$$
is a solution of system \eqref{eq:system_rho_mu_zero_restriction}, asymptotic to $O^+$ as $t\to-\infty$ and asymptotic to $O$ as $t\to+\infty$. By the symmetry, $(x=-x_0(t), y=0, z=z_0(t))$ gives us the second unstable separatrix of $O^+$, which also tends to $O$ as $t\to+\infty$. Also, by the symmetry, the unstable separatrices of $O^-$ lie in the invariant plane $x=0$ and tend to $O$ as $t\to+\infty$ too.

Note that this heteroclinic connection splits with a non-zero velocity when $\rho$ changes. Indeed, the ellipse $E(x,z)=(1-\rho)^2$ passes throw the point $O^+$ and contains trajectories from the one-dimensional unstable manifold $W^u(O^+)$. The ellipse $E(x,z)=\rho^2$ passes through the point $O$ and contains trajectories from the two-dimensional stable manifold $W^s(O)$. These two ellipses intersect the cross-section $(z=\rho, x>0)$ at the points $(x,y,z)=((1-\rho)/\sqrt{2|A|},0,\rho)$ and $(x,y,z)=(\rho/\sqrt{2|A|},0,\rho)$. So, the distance between the intersection points is $(1-2\rho)/\sqrt{2|A|}$ and has a non-zero derivative with respect to $\rho$. Since the stable manifold $W^s(O)$ intersects the invariant plane $y=0$ transversely, the distance between these points plays the role of the distance between $W^s(O)$ and $W^u(O^+)$ in the cross section $z=\rho$. Therefore, we can conclude that the heteroclinic connection at $\rho=1/2, \omega=0, \mu=0$ splits with a non-zero velocity as $\rho$ varies.  Since the invariant manifolds $W^s(O)$ and $W^u(O^+)$ depend smoothly on parameters, this implies that the distance between $W^s(O)$ and $W^u(O^+)$ on the cross-section changes with a non-zero velocity, as $\rho$ varies, for all $(\rho,\omega,\sqrt{\mu})$ sufficiently close to $(1/2,0,0)$. This proves that the heteroclinic connection established below does indeed split with a non-zero velocity, as claimed in the theorem.

Further we denote $\hat{\rho}=1/2-\rho$, and study system \eqref{FlowNormalFormRescaled} when $\omega$, $\hat{\rho}$ and  $\mu>0$ are small. Then, in complex coordinates, system \eqref{FlowNormalFormRescaled} can be written in the form
$$
\begin{aligned}
\dot{u}&=-1/2\, u+  u^* z+(\hat{\rho}+i\omega)\, u+ \sqrt{\mu}\, f_1(u,z), \\
\dot{z}&=(A+iB)\, u^2+(A-iB)\, (u^*)^2  +\sqrt{\mu}\, f_2(u,z),
\end{aligned}
$$
where $f_1, f_2$ are given by \eqref{eq:high_order_terms}.

For this system, for small $\hat{\rho}$, $\omega$ and $\sqrt{\mu}$,  we look for a heteroclinic solution of the form
$$
u(t)=x_0(t)+U(t), \ \ \
z(t)=z_0(t)+Z(t),
$$
where $U(t)=X(t)+i  Y(t)$, and $X(t), Y(t), Z(t)$ are real-valued bounded smooth functions such that
\begin{equation}\label{boundary_condition}
\lim_{t\to\pm\infty} (X(t), Y(t), Z(t))=0.
\end{equation}
The functions $U(t), Z(t)$ solve the following non-autonomous system of differential equations:
$$
\begin{aligned}
\dot{U}&=-1/2\, U+z_0(t)\, U^*+x_0(t)\, Z+g_1(U,Z)+i g_2(U,Z),\\
\dot{Z}&=2(A+iB)\, x_0(t)\, U+2(A-iB)\, x_0(t)\, U^*+g_3(U,Z),
\end{aligned}
$$
where $g_i(U,Z)$ are real-valued functions given by the formulas
\begin{equation}\label{eq:g_def}
\begin{aligned}
&g_1(U,Z)+i g_2(U,Z)=U^* Z+(\hat{\rho}+i\omega)\, u+\sqrt{\mu}\, f_1(u,z), \\
&g_3(U,Z)=(A+iB)\, U^2+(A-iB)\, (U^*)^2+\sqrt{\mu}\, f_2(u,z).
\end{aligned}
\end{equation}
In real coordinates this system has the form
\begin{equation}\label{nonhomogeneous_system}
\begin{aligned}
\dot{X}&=(z_0(t)-1/2)\, X+x_0(t)\, Z+g_1(X,Y,Z),\\
\dot{Y}&=-(z_0(t)+1/2)\, Y+g_2(X,Y,Z),\\
\dot{Z}&=4(A\, X-B\, Y)\, x_0(t)+g_3(X,Y,Z).
\end{aligned}
\end{equation}
We need to find a solution of this system that satisfies boundary condition \eqref{boundary_condition}. To do this, we first solve the linear homogeneous system of equations
$$
\begin{aligned}
\dot{X}&=(z_0(t)-1/2)\, X+x_0(t)\, Z,\\
\dot{Y}&=-(z_0(t)+1/2)\, Y,\\
\dot{Z}&=4(A\, X-B\, Y)\, x_0(t).
\end{aligned}
$$
This system has three linearly independent solutions:
\begin{equation}\label{eq:fundamental_sol}
\begin{aligned}
(X_1,Y_1,Z_1)&=(\dot{x}_0(t) \, ,\,0 \, , \, \dot{z}_0(t)),\\
(X_2,Y_2,Z_2)&=\left(\dot{x}_0(t)\, \int_0^t\frac{ds}{x_0^2(s)}+\frac{1}{2x_0(t)} \, ,\,0 \, , \, \dot{z}_0(t)\, \int_0^t\frac{ds}{x_0^2(s)}\right),\\
(X_3,Y_3,Z_3)&=\left(c_1\, X_1+c_2\, X_2\ ,\ \frac{e^{-t}}{x_0(t)} \ ,\ c_1\, Z_1+c_2\, Z_2 \right),
\end{aligned}
\end{equation}
where $c_1$ and $c_2$ are given by the formulas
$$
c_1(t)=-\frac{4B}{A}\,\int\limits_t^{+\infty}  e^{-s}\, \frac{X_2(s)}{x_0(s)}\, ds, \ \
c_2(t)=\frac{4B}{A}\,\int\limits_t^{+\infty}  e^{-s} \, \frac{X_1(s)}{x_0(s)}\, ds.
$$
Note the following asymptotics as $t\to\pm\infty$:
\begin{equation}\label{eq:asymptotics}
\begin{aligned}
x_0(t)\sim e^{-|t|/2}, \ X_1(t)=O(e^{-|t|/2}), \ \ X_2(t)=O(t\, e^{-|t|/2}), \ \  X_3(t)= O(e^{-t}\, e^{-|t|/2}), \\
z_0(t)\sim e^{-|t|}, \ Z_1(t)=O(e^{-|t|}), \ \ Z_2(t)\to\mp 2|A|, \ \ Z_3(t)=O(e^{-t}), \ \
Y_3(t)\sim e^{-t}\, e^{|t|/2}.
\end{aligned}
\end{equation}

Now we write an integral equation for the sought heteroclinic solution of system \eqref{nonhomogeneous_system}. Denote
$$
\Phi(t)=
\begin{pmatrix}
X_1& X_2 & X_3 \\
0& 0 & Y_3 \\
Z_1& Z_2 & Z_3 \\
\end{pmatrix}, \ \
F(t)=
\begin{pmatrix}
g_1(X(t),Y(t),Z(t)) \\
g_2(X(t),Y(t),Z(t)) \\
g_3(X(t),Y(t),Z(t))
\end{pmatrix}.
$$
By the variation of constants method, we obtain
$$
(X,Y,Z)^{T}=\Phi(t)\, \int \Phi^{-1}(s)\, F(s)\, ds.
$$
Since
$$
\Phi^{-1}(t)=\frac{1}{Ax_0(t)}\,
\begin{pmatrix}
-Z_2 & (-X_2 Z_3+Z_2 X_3)/Y_3 & X_2 \\
Z_1 & (X_1 Z_3-Z_1 X_3)/Y_3 & -X_1 \\
0 & (-X_1 Z_2+Z_1 X_2)/Y_3 & 0 \\
\end{pmatrix},
$$
we have
\begin{equation}\label{eq:G_def}
\Phi^{-1}(t)\, F(s)=
\frac{1}{A x_0(t)}\,
\begin{pmatrix}
-Z_2\, g_1+X_2\, g_3+(-X_2 Z_3/Y_3+Z_2 X_3/Y_3)\, g_2\\
Z_1\, g_1-X_1\, g_3 +(X_1 Z_3/Y_3-Z_1 X_3/Y_3)\, g_2\\
(-X_1 Z_2+Z_1 X_2)\, g_2/Y_3 \\
\end{pmatrix}:=\begin{pmatrix}
G_1 \\
G_2 \\
G_3 \\
\end{pmatrix}.
\end{equation}

Therefore,
\begin{equation}\label{eq:general_solution}
\begin{aligned}
X(t)&=X_1(t)\, \int\limits_{t_1}^{t} G_1(s)\, ds+X_2(t)\, \int\limits_{t_2}^{t} G_2(s)\, ds+X_3(t)\, \int\limits_{t_3}^{t} G_3(s)\, ds, \\
Y(t)&= Y_3(t)\, \int\limits_{t_3}^t G_3(s)\, ds, \\
Z(t)&=Z_1(t)\, \int\limits_{t_1}^t G_1(s)\, ds+Z_2(t)\, \int\limits_{t_2}^t G_2(s)\, ds+Z_3(t)\, \int\limits_{t_3}^t G_3(s)\, ds,
\end{aligned}
\end{equation}
where different integration limits $t_1, t_2, t_3$ correspond to different solutions of system \eqref{nonhomogeneous_system}. The unstable separatrix of $O^+$ corresponds to a solution such that $X, Y, Z$ tend to $0$ as $t\to-\infty$, and thus satisfies the integral equations
\begin{equation}\label{eq:unstable_manifold}
\begin{aligned}
X(t)&=X_1(t)\, \int\limits_{0}^t G_1(s)\, ds+X_2(t)\, \int\limits_{-\infty}^t G_2(s)\, ds+X_3(t)\, \int\limits_{-\infty}^t G_3(s)\, ds, \\
Y(t)&= Y_3(t)\, \int\limits_{-\infty}^t G_3(s)\, ds, \\
 Z(t)&=Z_1(t)\,  \int\limits_{0}^t G_1(s)\, ds+Z_2(t)\, \int\limits_{-\infty}^t G_2(s)\, ds+Z_3(t)\, \int\limits_{-\infty}^t G_3(s)\, ds.
\end{aligned}
\end{equation}
We took $t_2=-\infty$ because $Z_2(t)$ tends to a non-zero constant as $t\to-\infty$, and $t_3=-\infty$ because $Y_3(t)\to \infty$ as $t\to-\infty$. We assume $t_1=0$ because solutions corresponding to different $t_1$ must give us the same trajectory of system \eqref{FlowNormalFormRescaled} (the unstable separatrix) up to a time shift.

The solutions corresponding to trajectories of $W^s(O)$ satisfy the integral equations
\begin{equation}\label{eq:stable_manifold}
\begin{aligned}
X(t)&=X_1(t)\, \left(C_1+ \int\limits_{0}^t G_1(s)\, ds\right)-X_2(t)\, \int\limits_{t}^{+\infty} G_2(s)\, ds+X_3(t)\, \left(C_3+ \int\limits_{-\infty}^t G_3(s)\, ds\right), \\
Y(t)&= Y_3(t)\, \left(C_3+ \int\limits_{-\infty}^t G_3(s)\, ds\right), \\
 Z(t)&=Z_1(t)\,\left(C_1+ \int\limits_{0}^t G_1(s)\, ds\right)-Z_2(t)\, \int\limits_{t}^{+\infty} G_2(s)\, ds+Z_3(t)\,\left(C_3+ \int\limits_{-\infty}^t G_3(s)\, ds\right),
\end{aligned}
\end{equation}
with arbitrary constants $C_1, C_3$.

The sought heteroclinic solution connecting $O^+$ and $O$ must satisfy both \eqref{eq:unstable_manifold} and \eqref{eq:stable_manifold}. Therefore, it satisfies the following system of integral equations
\begin{equation}\label{intergal_system}
\begin{aligned}
X(t)&=X_1(t)\, \int\limits_{0}^t G_1(s)\, ds+X_2(t)\, \int\limits_{-\infty}^t G_2(s)\, ds+X_3(t)\, \int\limits_{-\infty}^t G_3(s)\, ds, \\
Y(t)&= Y_3(t)\, \int\limits_{-\infty}^t G_3(s)\, ds, \\
 Z(t)&=
 \begin{cases}Z_1(t)\, \int\limits_{0}^t G_1(s)\, ds+Z_2(t)\, \int\limits_{-\infty}^t G_2(s)\, ds+Z_3(t)\, \int\limits_{-\infty}^t G_3(s)\, ds,  \ \  \text{for $t\leq 0$}, \\
Z_1(t)\, \int\limits_{0}^t G_1(s)\, ds-Z_2(t)\, \int\limits_t^{+\infty} G_2(s)\, ds+Z_3(t)\, \int\limits_{-\infty}^t G_3(s)\, ds \ \  \text{for $t> 0$},
\end{cases}
\end{aligned}
\end{equation}
and
\begin{equation}\label{intergal_condition}
\int\limits_{-\infty}^{+\infty} G_2(s)\, ds=0.
\end{equation}

Let us show that for small $\hat{\rho}$, $\omega$ and $\sqrt{\mu}$ system \eqref{intergal_system} has a unique solution in the space of vector functions $(X(t),Y(t),Z(t))$ such that $X(t), Y(t)$ are continuous for all $t$ and $Z(t)$ is also continuous except for the point $t=0$, and $(X(t),Y(t),Z(t))$ are bounded in the weighted norm
$$
\|(X,Y,Z)\|_{w}=\max \left(\sup_{t\in \mathbb{R}}\, \frac{|X(t)|}{e^{-|t|/4}}, \ \ \sup_{t\in \mathbb{R}}\, \frac{|Y(t)|}{e^{-|t|/4}}, \ \ \sup_{t\in \mathbb{R}}\, \frac{|Z(t)|}{e^{- |t|/2}}\right).
$$
By the contraction mapping principle, it is enough to show that there exists a small constant $\varepsilon$ such that for any sufficiently small $(\hat{\rho},\omega,\sqrt{\mu})$, the right-hand sides of equations \eqref{intergal_system} define a contracting map of the ball $\mathcal{B}_{\varepsilon}=\{\|X,Y,Z\|_{w}\leq \varepsilon\}$ into itself. Indeed, let us first check that the set $\mathcal B_{\varepsilon}$ is invariant under these transformations for some $\varepsilon$. Consider a function $(X,Y,Z)\in \mathcal B_{\varepsilon}$ and let $(\bar{X},\bar{Y},\bar{Z})$ be its image. Since in \eqref{eq:high_order_terms} the function $f_1$ vanishes at $u=u^*=0$ and $f_2$ vanishes at the equilibrium states $(u,u^*,z)=(0,0,\pm 1)$ and $(u,u^*,z)=(0,0,0)$ along with its derivatives with respect to $u$ and $u^*$, it follows from \eqref{eq:asymptotics}, that for $(X,Y,Z)\in \mathcal{B}_{\varepsilon}$
\begin{equation}\label{eq:f_estimates}
\begin{aligned}
f_1(u,u^*,z)=O(|u|)=O(e^{-|t|/4}),&  \ \ \
f_2(u,u^*,z)=O(|z|\, |z-1|+|u|^2)= O(e^{-|t|/2}), \\
\frac{\partial f_2}{\partial (u,u^*)}&=O(|z|\, |z-1|+|u|)=O(e^{-|t|/4}).
\end{aligned}
\end{equation}
Therefore, by \eqref{eq:g_def}, \eqref{eq:asymptotics}, and \eqref{eq:f_estimates},
$$
g_{1,2}(X,Y,Z)= O(\varepsilon^2,\hat{\rho},\omega,\sqrt{\mu})\, e^{-|t|/4}, \ \ \  \
g_3(X,Y,Z)=O(\varepsilon^2,\sqrt{\mu})\, e^{-|t|/2}.
$$
Thus, by \eqref{eq:asymptotics} and \eqref{eq:G_def}, the functions $G_{1,2,3}$ satisfy the estimates
$$
G_1(t)=O(\varepsilon^2,\hat{\rho},\omega,\sqrt{\mu})\, e^{|t|/4}, \ \ \
G_2(t)=O(\varepsilon^2,\hat{\rho},\omega,\sqrt{\mu})\, e^{- |t|/2}, \ \ \
G_3(t)=O(\varepsilon^2,\hat{\rho},\omega,\sqrt{\mu})\, e^{t}\, e^{-3|t|/4}.
$$
In particular, these estimates imply that the integrals in \eqref{intergal_system} are well defined and the following asymptotics hold
$$
\int\limits_{0}^t G_1(s)\, ds=O(\varepsilon^2,\hat{\rho},\omega,\sqrt{\mu})\, e^{|t|/4}, \ \ \ \ \ \int\limits_{-\infty}^t G_2(s)\, ds \stackrel{t\to -\infty}{=}O(\varepsilon^2,\hat{\rho},\omega,\sqrt{\mu}) \, e^{-|t|/2},
$$
$$
\int\limits_t^{+\infty} G_2(s)\, ds \stackrel{t\to +\infty}{=} O(\varepsilon^2,\hat{\rho},\omega,\sqrt{\mu})\, e^{-|t|/2}, \ \ \ \ \ \
\int\limits_{-\infty}^t G_3(s)\, ds =O(\varepsilon^2,\hat{\rho},\omega,\sqrt{\mu}) \, e^{t}\, e^{-3|t|/4}.
$$
The obtained estimates imply, together with \eqref{eq:asymptotics}, that the image functions $(\bar{X},\bar{Y},\bar{Z})$ satisfy
$$
\bar{X}(t)=O(\varepsilon^2,\hat{\rho},\omega,\sqrt{\mu})\, e^{-|t|/4},\ \ \ \bar{Y}(t)=O(\varepsilon^2,\hat{\rho},\omega,\sqrt{\mu})\, e^{-|t|/4}, \ \ \ \bar{Z}(t)=O(\varepsilon^2,\delta_1,\delta_2)\, e^{-|t|/2}.
$$
For any sufficiently small $\varepsilon$, if $\hat{\rho}$, $\omega$, and $\sqrt{\mu}$ are small enough, then all the coefficients $O(\varepsilon^2,\hat{\rho},\omega,\sqrt{\mu})$ in the above formulas are less than $\varepsilon$ in the absolute value. It is also obvious that the functions $\bar{X},\bar{Y}$ are continuous and $\bar{Z}$ can only have a discontinuity at $t=0$. Therefore, the vector function $(\bar{X},\bar{Y},\bar{Z})$ belongs to the ball $\mathcal{B}_{\varepsilon}$, as required.

Now we check that the transformation $(X,Y,Z)\to (\bar{X},\bar{Y},\bar{Z})$ is contracting in $\mathcal{B}_{\varepsilon}$. For a pair of vector functions $(X_1, Y_1,Z_1), (X_2, Y_2,Z_2)\in \mathcal{B}_{\varepsilon}$, we have, by \eqref{eq:g_def}, \eqref{eq:asymptotics} and \eqref{eq:f_estimates},
$$
|g_1(X_1, Y_1,Z_1)-g_1(X_2, Y_2,Z_2)|<(2\varepsilon+|\hat{\rho}|+|\omega|+O(\sqrt{\mu}))\, \|(X_1, Y_1,Z_1)-(X_2, Y_2,Z_2)\|_{w}\, e^{-|t|/4},
$$
$$
|g_2(X_1, Y_1,Z_1)-g_2(X_2, Y_2,Z_2)|<(2\varepsilon+|\hat{\rho}|+|\omega|+O(\sqrt{\mu}))\, \|(X_1, Y_1,Z_1)-(X_2, Y_2,Z_2)\|_{w}\, e^{-|t|/4},
$$
$$
|g_3(X_1, Y_1,Z_1)-g_3(X_2, Y_2,Z_2)|<(8(|A|+|B|)\, \varepsilon+O(\sqrt{\mu}))\, \|(X_1, Y_1,Z_1)-(X_2, Y_2,Z_2)\|_{w}\, e^{-|t|/2}.
$$
By \eqref{eq:asymptotics}, \eqref{eq:G_def}, and \eqref{intergal_system}, we obtain
$$
|\bar{X}_1(t)-\bar{X}_2(t)|<O(\varepsilon,\hat{\rho},\omega,\sqrt{\mu})\, \|(X_1, Y_1,Z_1)-(X_2, Y_2,Z_2)\|_{w}\, e^{-|t|/4} ,
$$
$$
|\bar{Y}_1(t)-\bar{Y}_2(t)|<O(\varepsilon,\hat{\rho},\omega,\sqrt{\mu})\, \|(X_1, Y_1,Z_1)-(X_2, Y_2,Z_2)\|_{w}\, e^{-|t|/4},
$$
$$
|\bar{Z}_1(t)-\bar{Z}_2(t)|<O(\varepsilon,\hat{\rho},\omega,\sqrt{\mu})\, \|(X_1, Y_1,Z_1)-(X_2, Y_2,Z_2)\|_{w}\, e^{-|t|/2}.
$$
These inequalities mean
$$
\|(\bar{X}_1,\bar{Y}_1,\bar{Z}_1)-(\bar{X}_2,\bar{Y}_2,\bar{Z}_2)\|_{w}=O(\varepsilon,\hat{\rho},\omega,\sqrt{\mu}) \, \|(X_1, Y_1,Z_1)-(X_2, Y_2,Z_2)\|_{w},
$$
which means the contraction. Thus, system \eqref{intergal_system} has a unique small solution in $\mathcal{B}_{\varepsilon}$, as required.

Note that we can choose $\varepsilon=O(\hat{\rho},\omega,\sqrt{\mu})$ and, therefore, the solution $(X,Y,Z)$ is $O(\hat{\rho},\omega,\sqrt{\mu})$. Recall that condition \eqref{intergal_condition} defines the parameter values for which this solution corresponds to the heteroclinic solution
$$
(u(t),z(t))=(x_0(t)+X(t)+i Y(t), z_0(t)+Z(t))=(x_0(t),z_0(t))+O(\hat{\rho},\omega,\sqrt{\mu})
$$
of system \eqref{FlowNormalFormRescaled}. Due to \eqref{eq:high_order_terms}, \eqref{eq:fundamental_sol}, and \eqref{eq:G_def}, the integral in  \eqref{intergal_condition} can be written as
$$
\int\limits_{-\infty}^{+\infty} G_2(s)\, ds=\int\limits_{-\infty}^{+\infty} \frac{Z_1}{Ax_0}\, (\hat{\rho} \, x_0+\sqrt{\mu}\, {\rm Re}\, f_1(x_0,z_0))\, ds - \int\limits_{-\infty}^{+\infty} \frac{X_1}{Ax_0}\, \sqrt{\mu} \, f_2(x_0,z_0)\, ds+
$$
$$
+\int\limits_{-\infty}^{+\infty}
\frac{X_1 Z_3-Z_1 X_3}{Ax_0 Y_3}\, (\omega \, x_0+\sqrt{\mu}\, {\rm Im}\, f_1(x_0,z_0)))+O(\mu+\hat{\rho}^2+\omega^2)=
$$
$$
=\frac{1}{A}\, \hat{\rho}+ \frac{k_1}{A}\, \sqrt{\mu}+\frac{k_2}{A}\, \omega+O(\mu+\hat{\rho}^2+\omega^2),
$$
where
$$
k_1=\frac{\sqrt{b_1}}{12|a_0|}+ \left({\rm Re}(a_1)+\frac{{\rm Im}(a_1) B}{3A} \right)\,\frac{1}{12|A|\,|a_0|}+\left({\rm Re}(a_2)-\frac{{\rm Im}(a_2) B}{6 A}\right)\, \frac{1}{3|a_0|\,\sqrt{|b_1|}}+
$$
$$
+\left( {\rm Re}(a_3/a_0^2)+\frac{{\rm Im}(a_3/a_0^2)B}{3A}\right)\, \frac{|a_0|}{12|A|}+\frac{b_2}{24|A||a_0|},
$$
$$
k_2=\frac{(\pi^2-9)B}{3A}.
$$
So, condition \eqref{intergal_condition} gives us the sought  bifurcation surface given by equation \eqref{eq:het_curve}.
\end{proof}

\section{Shilnikov-like criterion for the birth of pseudohyperbolic attractors from the heteroclinic connection}\label{Shilnikov_criterion}

In this section we study bifurcations of the four-winged heteroclinic connection when the equilibrium state $O$ is a dicritical node on its stable manifolds, see Fig. \ref{HeteroclinicConnection}b. We prove that, under additional assumptions, perturbations of a system such that the heteroclinic connection splits and $O$ becomes a saddle-focus lead to the appearance of the Lorenz and Sim\'o attractors. Then, we check that the heteroclinic connection in system \eqref{FlowNormalFormGeneral} provided by Theorems \ref{th2} and \ref{th5} satisfies the required assumptions. As a result, we prove Theorem \ref{th3}.

Let $\dot{w}=G_{\theta,\delta}(w)$, where $w=(x,y,z)\in\mathbb{R}^3$, be a two-parameter family of smooth vector fields depending smoothly on the parameters $\theta$ and $\delta$. Assume that the vector fields are invariant with respect to the $\mathbb{Z}_4$-symmetry generated by the matrix
$$
S=
\begin{pmatrix}
0 & -1 & 0\\
1 & 0 & 0\\
0 & 0 & -1
\end{pmatrix}.
$$
Due to the symmetry $S$,  the $z$-axis is an invariant line of the vector field and the point $O(0,0,0)$ is a symmetric equilibrium state. We suppose that
\begin{itemize}
\item[(B1)] the equilibrium state $O(0,0,0)$ has a two-dimensional stable manifold and a one-dimensional unstable manifold which coincides with a segment of the $z$-axis. For $\theta\neq 0$ the equilibrium state $O$ is a saddle-focus, and for $\theta=0$ it is a dicritical node on the stable manifold.
\end{itemize}
In a neighborhood of the equilibrium state $O$, one can introduce coordinates in such a way that the system, after an appropriate rescaling of time, can be written locally in the form (see \cite{book1, book2})
\begin{equation}\label{LinearFlowO}
\begin{aligned}
\dot{x}&=- x-\theta \, y+f_{11}(x,y,z)\, x+f_{12}(x,y,z)\, y,  \\
\dot{y}&=\theta \, x-  y+f_{21}(x,y,z)\, x+f_{22}(x,y,z)\, y,\\
\dot{z}&=\hat{\nu} \, z,
\end{aligned}
\end{equation}
where $\hat{\nu}>0$, and  the non-linear terms satisfy the identities
\begin{equation}\label{LinearFlowO_high_terms}
f_{ij}(x,y,0)\equiv 0, \ \ f_{ij}(0,0,z)\equiv 0.
\end{equation}
The functions $f_{ij}$, as well as $\hat{\nu}$, are smooth functions of small parameters $\theta$, and $\delta$. Note that one can make the coordinate transformation, which brings the system to the form \eqref{LinearFlowO}, such that the system remains $S$-symmetric. 

We further assume that there is a pair of symmetric equilibrium states $O^-$ and $O^+$ on the $z$-axis and the following holds
\begin{itemize}
\item[(B2)] the equilibrium states $O^-$ and $O^+$ are saddles with a one-dimensional unstable manifold and a two-dimensional stable manifold whose the leading direction coincides with the $z$-axis.
\end{itemize}
We also assume that there are no other equilibrium states lying on the $z$-axis between $O^-$ and $O^+$. Therefore, the unstable manifold of the point $O$ contains the points $O^-$ and $O^+$ in its closure.

In a neighborhood of the saddle $O^-$ one can introduce local coordinates $(x_1,y_1,z_1)$ and change time (see \cite{book1,book2}) such that the system takes the form
\begin{equation}\label{LinearFlowO-}
\begin{aligned}
\dot{x}_1&=-\nu_{ss}\, x_1+g_{11}(y_1,z_1)\, z_1+ g_{12}(x_1,y_1,z_1)\, x_1,  \\
\dot{y}_1&=y_1,\\
\dot{z}_1&=-\nu\, z_1+g_{21}(y_1,z_1)\, z_1+ g_{22}(x_1,y_1,z_1)\, x_1,
\end{aligned}
\end{equation}
where $0<\nu<\nu_{ss}$ and
\begin{equation}\label{LinearFlowO-_high_terms}
g_{21}(0,z_1)\equiv 0, \ \   g_{22}(x_1,0,z_1)\equiv 0, \ \  g_{11}(y_1,0)\equiv 0, \ \  g_{21}(y_1,0)\equiv 0.
\end{equation}

The functions $g_{ij}$ and the coefficients $\nu$, $\nu_{ss}$ are smooth functions of the parameters $\theta$, $\delta$, and the coordinate transformation is chosen in such a way that system \eqref{LinearFlowO-} is $S^2$-symmetric, i.e., it is symmetric with respect to $x_1\to -x_1$, $y_1\to -y_1$.

We assume that the coefficients $\nu$ and $\hat{\nu}$ satisfy the following inequality
\begin{itemize}
\item[(B3)] $0<\hat{\nu}<\nu<(1+\hat{\nu})/2<1$.
\end{itemize}
For convenience, the local coordinates are chosen such that the positive $z_1$ corresponds to positive $z$. By the symmetry, near the equilibrium state $O^+$ the system also has form \eqref{LinearFlowO-} in appropriate local coordinates.

The one-dimensional unstable manifold $W^u(O^-)$ (as well as $W^u(O^+)$) consists of two trajectories (separatrices) which we denote by $\Gamma_1^-$ and $\Gamma_2^-$ (resp., $\Gamma_1^+$ and $\Gamma_2^+$). We assume that the following holds
\begin{itemize}
\item[(B4)] for $\delta=0$ the unstable separatrices $\Gamma_1^-$, $\Gamma_2^-$ (and, by the symmetry, $\Gamma_1^+$, $\Gamma_2^+$) lie in the stable manifold $W^s(O)$. When $\delta$ changes, the corresponding heteroclinic connections split with a non-zero velocity.
\end{itemize}
So, for $\delta=\theta=0$, the system $\dot{w}=G_{\delta,\theta}(w)$ has the four-winged heteroclinic connection as shown in Fig. \ref{HeteroclinicConnection}b. The parameter $\delta$ plays the role of the distance between the unstable manifold $W^u(O^-)$ and the stable manifold $W^s(O)$. More precisely, in a small neighborhood of the point $O$ we consider a cylindrical cross-section $\Pi_{cyl}$, corresponding to a constant value of $x^2+y^2$. We perform a linear scaling of the coordinates near $O$ so that the cross-section is given by
$$\Pi_{cyl}=\{(r,\phi,z) \; |\; r=1,\; |z|<1 \},
$$
where $x=r \cos\phi$,  $y=r \sin \phi$. We fix the choice of the coordinates by letting the coordinates of the intersection point of $\Gamma_1^-$ with the cross-section $\Pi_{cyl}$ be equal to $(z,\phi)=(\delta,0)$. This is achieved by a scaling $z$ and a linear rotation in the $(x,y)$-plane. Obviously, this does not change the form of system \eqref{LinearFlowO}, \eqref{LinearFlowO_high_terms}. By the symmetry, the separatrix $\Gamma_2^-$ intersects the cross-section $\Pi_{cyl}$ at the point $(z,\phi)=(\delta,\pi)$.

For the equilibrium $O^-$, there is an invariant extended unstable smooth manifold $W^{uE}(O^-)$ tangent to the $(y_1,z_1)$-plane at the point $O^-$. This manifold contains the unstable separatrices $\Gamma_{1,2}^-$ of $O^-$. We assume that
\begin{itemize}
\item[(B5)] For $\delta=\theta=0$ the extended unstable manifold $W^{uE}(O^-)$ and the stable manifold $W^s(O)$ transversely intersect at points of $\Gamma_{1,2}^-$.
\end{itemize}

The two-dimensional stable manifolds $W^s(O^-)$ and $W^s(O^+)$ contain the $z$-axis, and these manifolds are foliated by strong stable foliations \cite{book1, book2}. Denote by $V(z)$ the tangent subspace to the strong stable foliations at a point $(0,0,z)$, where $0<|z|<1$.
If the equilibrium $O$ is a dicritical node on its stable manifolds, then there are two limit subspaces
$$V^-=\lim_{z\to -0} V(z), \ \ \  V^+=\lim_{z\to +0} V(z).$$
Due to the symmetry, the limit subspaces are orthogonal to each other. Denote by $\Omega^{\pm}\in (-\pi/2,\pi/2]$ the angle between the subspace $V^-$ (or $V^+$) and the $x$-axis for $\theta=\delta=0$. As we mentioned, the symmetry implies that $|\Omega^--\Omega^+|=\pi/2$.

\begin{figure}[h]
    \centering
    \includegraphics[width=0.5\linewidth]{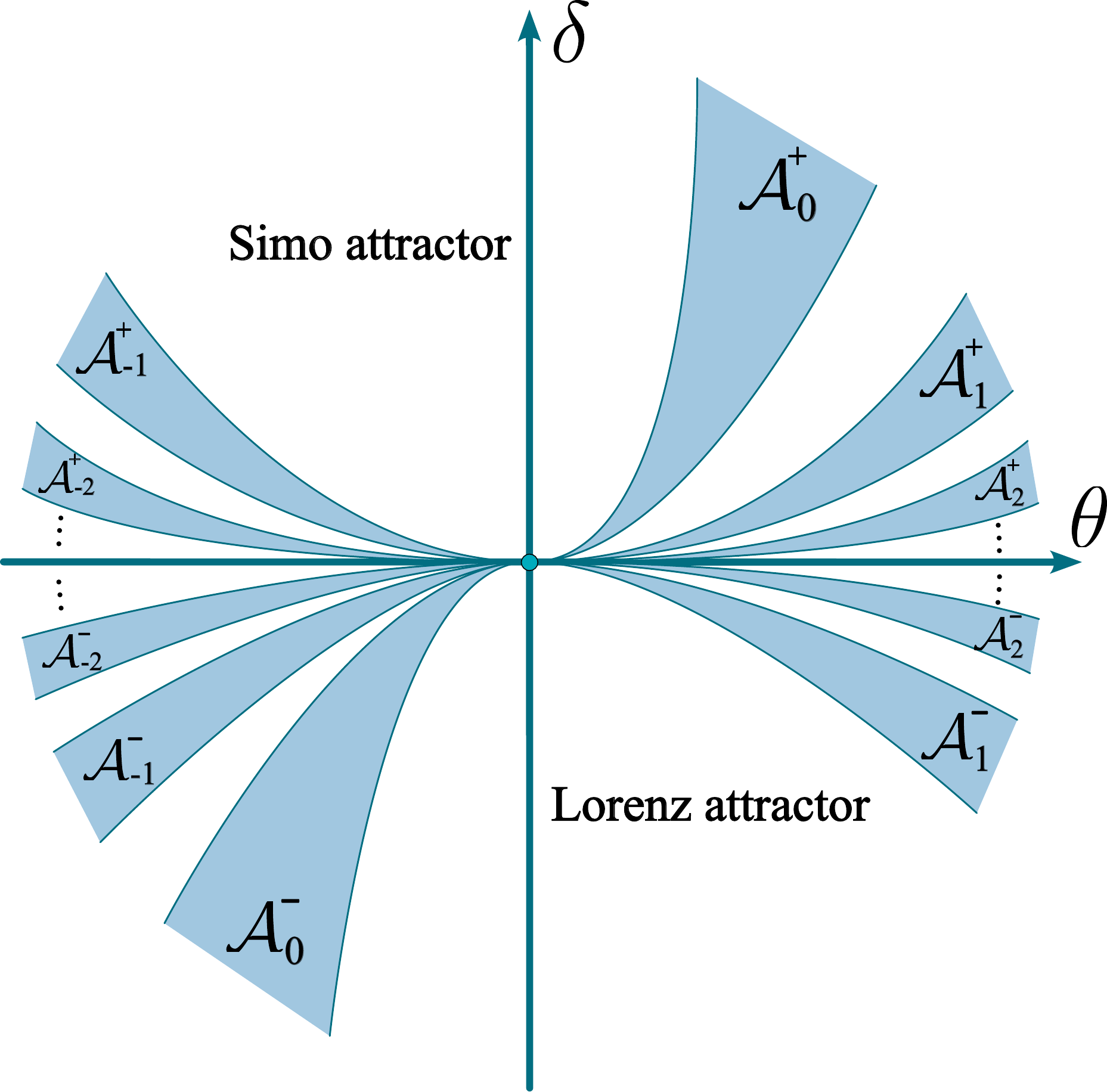}
    \caption{Regions of the existence of the attractors near the stem point. For $\delta>0$ the regions $\mathcal{A}^+_k$ correspond to the Sim\'o angel; for $\delta<0$ the regions $\mathcal{A}^-_k$ correspond to the Lorenz attractor.}
    \label{StemPoint}
\end{figure}

\begin{theorem}\label{th6}
Let a family of vector fields satisfy assumptions B1-B5. Then, in the half plane $(\theta,\delta<0)$, there exists a countable sequence of open disjoint regions $\mathcal {A}^-_k$ corresponding to the existence of symmetric pairs of Lorenz attractors, and, in the half plane $(\theta,\delta>0)$, there exists a countable sequence of open disjoint regions $\mathcal {A}^+_k$ corresponding to the existence of four-winged Sim\'o angels, see Fig. \ref{StemPoint}.

For the sequence $\mathcal {A}^{-}_k$, the index $k$ runs through all integer values except for $k=0$ if $\Omega^{-}=0$. The region $\mathcal {A}^{-}_k$ lies in the quadrant $\theta\, (\Omega^{-}-\pi k)<0$, adjoins the point $(\theta,\delta)=(0,0)$ and is given by
\begin{equation}\label{eq:domain_A-_equation}
K_1^{-}\, \exp\left(-\frac{\nu|\Omega^{-}-\pi k|}{|\theta|}\right)<\delta<K_2^{-}\, \exp\left(-\frac{\nu|\Omega^{-}-\pi k|}{|\theta|}\right)
\end{equation}
for some positive constants $K_{1,2}^-$ independent of $k$.

Similarly, for the sequence $\mathcal {A}^{+}_k$, the index $k$ runs through all integer values except for $k=0$ if $\Omega^{+}=0$. The region $\mathcal {A}^{+}_k$ lies in the quadrant $\theta\, (\Omega^{+}-\pi k)<0$, adjoins the point $(\theta,\delta)=(0,0)$ and is given by
\begin{equation}\label{eq:domain_A+_equation}
K_1^{+}\, \exp\left(-\frac{\nu|\Omega^{+}-\pi k|}{|\theta|}\right)<\delta<K_2^{+}\, \exp\left(-\frac{\nu|\Omega^{+}-\pi k|}{|\theta|}\right)
\end{equation}
for some positive constants $K_{1,2}^+$ independent of $k$.
\end{theorem}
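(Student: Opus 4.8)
The plan is to reduce the return dynamics near the four-winged connection to the one-dimensional-like map \eqref{eq:Poincare_map_1} and then apply the hyperbolicity theory for such maps from \cite{preprint2}. First I would fix the geometry. Near $O$, use the cylindrical cross-section $\Pi_{cyl}=\{r=1,\,|z|<1\}$ adapted to the normalized local form \eqref{LinearFlowO}--\eqref{LinearFlowO_high_terms}; near each saddle $O^{\pm}$, use a cross-section $\Pi^{\pm}$ transverse to the leading stable direction (the $z$-axis) of the normalized form \eqref{LinearFlowO-}--\eqref{LinearFlowO-_high_terms}, coordinatized by the strong-stable coordinate $x_1$ and the unstable coordinate $y_1$. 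Assumptions B2--B3 give precisely the eigenvalue inequalities of A2 for $O^{\pm}$ (here $\lambda_1=1$, $\lambda_2=-\nu$, $\lambda_3=-\nu_{ss}$, and $\lambda_1+\lambda_2=1-\nu>0$ because $\nu<1$), and together with the passage estimates near $O$ they will yield the pseudohyperbolic splitting of A3 and the volume-expansion condition \eqref{eq:pseudo_3} over a full loop; the bounded domain $\mathcal{D}$ of A3 and the transversality of A4 are produced together with the Poincaré map. By the $\mathbb{Z}_4$-symmetry it suffices to analyze the first-return map $T$ on $\Pi^-\cup\Pi^+$ and, in the Simó case, the half-map $\hat T=S\circ T$, which sends each $\Pi^{\pm}$ into itself.

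Second, I would compute $T$ as a composition of four maps: the local map near $O^-$ from $\Pi^-$ to a cross-section transverse to $W^u(O^-)$; the global diffeomorphism along the separatrix $\Gamma^-_1$ (and, by $S^2$-symmetry, along $\Gamma^-_2$) to $\Pi_{cyl}$; the local map near $O$ from $\Pi_{cyl}$ to a small disk transverse to the $z$-axis; and the global diffeomorphism back to $\Pi^-$ (for $\delta<0$) or to $\Pi^+$ (for $\delta>0$). The local maps are controlled by the usual Shilnikov-type $C^1$ estimates for systems in forms \eqref{LinearFlowO} and \eqref{LinearFlowO-} (see \cite{book1,book2}); the quantitative facts I need are that the passage near $O^-$ contracts the leading stable coordinate by a factor of order $|y_1|^{\nu}$ and the strong-stable coordinate by order $|y_1|^{\nu_{ss}}$ (negligible since $\nu_{ss}>\nu$), while the passage near $O$ takes time of order $\ln(1/\mathrm{dist})$ and therefore rotates the $(x,y)$-plane through an angle proportional to $\theta\ln(1/\mathrm{dist})$ before the orbit leaves along the $z$-axis. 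Assumption B5 ensures that the cusped image of $\Pi^-$ under the first two maps reaches $\Pi_{cyl}$ as a wedge on which the $z$-coordinate is a non-degenerate function of $|y_1|^{\nu}$ and of the splitting parameter $\delta$. After tracking the rescalings that normalize the image of the separatrix to the point $M_1=(1,-\sigma)$, I expect the composed map (namely $T$ in the Lorenz case, $\hat T$ in the Simó case) to take exactly the form \eqref{eq:Poincare_map_1}, with $\varepsilon$ collecting the super-small factors (powers $|y_1|^{\nu_{ss}-\nu}$ and powers of the splitting) so that $\varepsilon\to0$ as $(\theta,\delta)\to(0,0)$, with the coefficient $c=c(\theta,\delta)$ entering through the position of $M_1$ relative to $W^s_{loc}(O^{\pm})$, and with the sign $\sigma$ and the alternative ``returns near $O^-$'' versus ``crosses to $O^+$'' fixed by $\mathrm{sign}(\delta)$ via B4. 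This is packaged as a separate proposition (the analogue of Proposition~\ref{th:Poincare_map}).

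Third, with \eqref{eq:Poincare_map_1} in hand I would invoke \cite{preprint2}: for every $\nu\in(0,1)$ there is an interval $I_\nu$ of values of $c$ for which, when $\varepsilon$ is small, the attractor of \eqref{eq:Poincare_map_1} is singular hyperbolic in the sense of Definition~\ref{def:SingularHuyperbolicity}, hence satisfies the Afraimovich--Bykov--Shilnikov cone conditions; combined with A1--A4 this upgrades the vector-field attractor to a symmetric pair of Lorenz attractors when it is $T$ that has form \eqref{eq:Poincare_map_1} (the $\delta<0$ case) and to a four-winged Simó angel when it is $\hat T$ (the $\delta>0$ case). It remains to identify, for each sign of $\delta$, the locus of $(\theta,\delta)$ where $\varepsilon$ is small and $c(\theta,\delta)\in I_\nu$. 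Since $c$ is governed by the exit direction of $\Gamma^-_1$ from the neighborhood of $O$ measured against the limiting strong-stable directions $V^{\pm}$, and this direction winds proportionally to $\ln(1/|\delta|)$ as $\delta\to0$ and is only defined modulo $\pi$, the condition $c\in I_\nu$ is satisfied on countably many $\delta$-branches indexed by $k\in\mathbb{Z}$: on the $k$-th branch the winding must bring the exit direction $O(\theta)$-close to the critical direction $\Omega^{\pm}-\pi k$, which forces $(\theta,\delta)$ into the quadrant $\theta(\Omega^{\pm}-\pi k)<0$, and solving for $\delta$ produces the exponentially thin intervals \eqref{eq:domain_A-_equation} and \eqref{eq:domain_A+_equation}; the value $k=0$ is excluded when $\Omega^{\pm}=0$ because then the critical direction is the $z$-axis itself and no nontrivial return, hence no attractor, is created. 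Disjointness of the regions and their accumulation at $(\theta,\delta)=(0,0)$ are immediate from their explicit form.

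The main obstacle is the second step: the honest derivation of the Poincaré map in the form \eqref{eq:Poincare_map_1}. The difficulty is threefold. One must carry the $C^1$ (cone) estimates for the passage near $O$ uniformly as $\theta\to0$, i.e. in the dicritical-node limit, where the standard saddle-focus normal-form bounds degenerate and the rotation rate vanishes together with the distinction between $V^+$ and $V^-$. One must verify that the numerous rescalings conspire so that the genuinely two-dimensional corrections are of size $\varepsilon\to0$ rather than $O(1)$, which is what makes \cite{preprint2} applicable. And one must extract the precise dependence of $c$ on the winding angle and on $V^{\pm}$ sharply enough to invert it and obtain the exponential description of $\mathcal{A}^{\pm}_k$ together with the quadrant and the $k=0$ exclusion. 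Everything else is either routine hyperbolic-theory bookkeeping (the symmetry reduction, the verification of A1--A4, the disjointness) or is imported from \cite{preprint2}.
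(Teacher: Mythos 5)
Your proposal follows essentially the same route as the paper: reduce the flow to a Poincar\'e (or half-Poincar\'e) map on small cross-sections near $O^{\pm}$ by composing two local Shilnikov-type passage maps (near $O^{\pm}$ and near $O$) with two global diffeomorphisms, bring the composition to the model form \eqref{eq:Poincare_map_1} after suitable rescalings, and invoke the singular-hyperbolicity result of \cite{preprint2} to pick out the exponentially thin $(\theta,\delta)$-windows indexed by the winding number $k$. What you flag as the main obstacle --- the uniform control of the passage near the degenerating dicritical node, the bookkeeping showing the two-dimensional corrections shrink to $o(|\delta|^s)$, and the inversion extracting the exponential domains and the $k=0$ exclusion --- is precisely the content of Proposition~\ref{th:Poincare_map}, whose proof carries out the asymptotics $\varepsilon^{\nu}/|\delta|\to 0$, $c\to p^{\nu-1}$, and $\tilde\eta_{1,2}=o(|\delta|^s)$ that your outline calls for; your heuristic for the $k=0$ exclusion (``critical direction is the $z$-axis'') is imprecise, since the relevant direction lies in the $(x,y)$-plane, but the reason in the paper is simply that $\Omega^{\pm}-\pi k=0$ makes the exponent in \eqref{eq:domain_equation_A-} vanish, so $\delta$ fails to tend to $0$ along that branch.
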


The statement of the theorem follows from Proposition \ref{th:Poincare_map} and Theorem \ref{th7} (see also discussion in Section \ref{Pseudohyperbolic_attractors}) which we give below. Proposition \ref{th:Poincare_map} describes the Poincar\'e map (or in the case of the attractor Sim\'o, the half-map) on a small cross-section, which plays the role of the cross-section $\Pi^-$ in assumption A5 in Section \ref{Pseudohyperbolic_attractors}.

Namely, near the point $O^-$ we choose a small cross-section $\Pi^-$ such that in the local coordinates it has the form (after rescaling $z_1$)
$$\Pi^-=\{(x_1,y_1,z_1)\;|\; z_1=1\}.$$
Let $\Pi^+=S(\Pi^-)$ be the symmetric cross-section in a neighborhood of the point $O^+$.  We construct the Poincar\'e map $T$ along trajectories of the system that maps the section $\Pi^-$ to $\Pi^+$ in the case $\delta>0$ and to $\Pi^-$ in the case $\delta<0$.

First, in a neighborhood of the point $O^-$, we choose two cross-sections $\Pi_1$ and $\Pi_2$ (see Fig.~\ref{Poincare_map}), which are given in the local coordinates by
$$\Pi_1=\{(x_1,y_1,z_1)\;|\; y_1=1\}, \ \ \Pi_2=\{(x_1,y_1,z_1)\;|\; y_1=-1\},$$
and consider the local Poincar\'e map $T_1:\Pi^-\backslash \{y_1=0\}\to \Pi_1\cup \Pi_2$. Namely, for sufficiently small $y_1\neq 0$, a trajectory starting at a point $(x_1,y_1,1)$ on the cross-section $\Pi^-$ intersects the cross-section $\Pi_1$ or $\Pi_2$, depending on the sign of $y_1$, at the point $(\bar{x}_1,{\rm sign}(y_1),\bar{z}_1)$, where $\bar{x}_1$, $\bar{z}_1$ are given by the formulas (\cite{book1,book2})
\begin{equation}\label{eq:T1_map}
T_1:\;
\begin{aligned}
\bar{x}_1&=o(|y_1|^{\tilde{\nu}}),\\
\bar{z}_1&=|y_1|^{\nu}+o(|y_1|^{\tilde{\nu}}),
\end{aligned}
\end{equation}
where $\nu<\tilde{\nu}<\min(2\nu,\nu_{ss})$. The expressions $o(|y_1^{\tilde{\nu}}|)$ refer to functions of $(x_1,y_1)$ whose derivatives containing $r$ differentiations with respect to $y_1$ are estimated as $o(|y_1|^{\tilde{\nu}-r})$.

For a small $\delta$, in a neighborhood of $\Gamma^-_1$ and $\Gamma^-_2$ we can consider the global Poincar\'e map $T_2:\Pi_1\cup \Pi_2 \to \Pi_{cyl}$, which is a local diffeomorphism and can be  written in the form
\begin{equation}\label{eq:T2_map}
\begin{array}{c}
T_2 |_{\Pi_1}: \;
\begin{aligned}
\bar{\phi}&=a_{11}\, x_1+a_{12}\, z_1 +O_2,\\
\bar{z}&=\delta+a_{21}\, x_1+a_{22}\, z_1+O_2,
\end{aligned} \\ \\
T_2 |_{\Pi_2}: \;
\begin{aligned}
\bar{\phi}&=\pi-a_{11}\, x_1+a_{12}\, z_1 +O_2,\\
\bar{z}&=\delta-a_{21}\, x_1+a_{22}\, z_1+O_2,
\end{aligned}
\end{array}
\end{equation}
where $a_{ij}$ are smooth functions of the parameters $\theta$ and $\delta$. Note that $T_2|_{\Pi_1}$ and $T_2|_{\Pi_2}$ are related by the symmetry $S^2$. Note that assumption B5 implies that  $a_{22}\neq 0$ for sufficiently small $\theta$ and $\delta$. Indeed, the manifold $W^s_{loc}(O)$ is $\bar{z}=0$ and, when condition \eqref{LinearFlowO-_high_terms} is satisfied, $W^{uE}_{loc}(O^-)$ is tangent to $x_1=0$, see \cite{book1}. The transversality of $W^s_{loc}(O)$ and $T_2(W^{uE}_{loc}(O^-))$  is obviously equivalent to $\partial \bar{z}/\partial z\neq 0$ for $(x_1,z_1)=(0,0)$.

Consider a neighborhood of the point $O$. Denote by $\Pi_{bot}$ and $\Pi_{top}$ the bottom and the top of the cylinder, respectively, i.e.,
$$
\Pi_{bot}=\{(r,\phi,z) \; |\; r<1,\; z=-1 \}, \ \ \Pi_{top}=\{(r,\phi,z) \; |\; r<1,\; z=1 \}.
$$
The local Poincar\'e map $T_3:\Pi_{cyl}\backslash \{z=0\}\to \Pi_{bot}\cup \Pi_{top}$ is given by (\cite{book1,book2})
\begin{equation}\label{eq:T3_map}
T_3:\;
\begin{aligned}
\bar{x}&=e^{-\tau}\, \cos(\phi+\theta \tau)+O(e^{-2\tau}), \\
\bar{y}&=e^{-\tau}\, \sin(\phi+\theta \tau)+O(e^{-2\tau}),
\end{aligned}
\end{equation}
where the transition time $\tau$ from the cross-section $\Pi_{cyl}$ to $\Pi_{bot}\cup \Pi_{top}$ is equal to
$$
\tau=\frac{1}{\hat{\nu}}\, \ln\frac{1}{|z|}.
$$
The map $T_3$ maps the point $(\rho=1,\phi,z)\in \Pi_{cyl}$ to the point $(\bar{x},\bar{y},\bar{z}={\rm sign}(z))\in \Pi_{bot}\cup \Pi_{top}$. Here, the expressions $O(e^{-2\tau})$ denote smooth functions of $(\tau,\phi)$ which are $O(e^{-2\tau})$ along with their derivatives.

Consider the global Poincar\'e map $T_4:\Pi_{bot}\cup \Pi_{top}\to \Pi_-\cup\Pi_+$. This map, restricted to the cross-section $\Pi_{bot}$, is a local diffeomorphism that maps the point $(x,y,z)=(0,0,-1)\in \Pi_{bot}$ to the point $(x_1,y_1,z_1)=(0,0,1)\in \Pi^-$.  Therefore, it can be written in the form

\begin{equation}\label{eq:T4_map}
T_4 |_{\Pi_{bot}}: \;
\begin{aligned}
\bar{x}_1&=b_{11}\, x+b_{12}\, y +O_2,\\
\bar{y}_1&=b_{21}\, x+b_{22}\, y+O_2,
\end{aligned}
\end{equation}
where $b_{ij}$ are smooth functions of the parameters $\theta$ and $\delta$. One can check that when conditions \eqref{LinearFlowO_high_terms} and \eqref{LinearFlowO-_high_terms} are fulfilled, $b_{21}/b_{22}=\tan(\Omega^-)$ for $\theta=\delta=0$ (we assume $\Omega^-=\pi/2$ in the case $b_{22}=0$). So, we
denote $B_i=\sqrt{b_{i1}^2+b_{i2}^2}$ and choose $\omega^-_i$, smoothly depending on the parameters,  such that $\tan(\omega^-_i)=b_{i1}/b_{i2}$
(we put $\omega^-_i=\pi/2$ if $b_{i2}=0$). We also fix the choice of $\omega_2^-$ by letting $\omega^-_2$ be equal to $\Omega^-$ for $\theta=\delta=0$. Since $T_4$ is a diffeomorphism, i.e., $b_{11}b_{22}-b_{12}b_{21}\neq 0$, one has $B_i\neq 0$ and $\omega^-_1\neq \omega^-_2$. In this notation, the map $T_4 |_{\Pi_{bot}}$ takes the form
\begin{equation}\label{eq:T4_bot}
T_4 |_{\Pi_{bot}}: \;
\begin{aligned}
\bar{x}_1&=B_1 \cos(\omega_1^-) \, x+B_1 \sin(\omega_1^-)\, y +O_2,\\
\bar{y}_1&=B_2 \cos(\omega_2^-) \, x+B_2 \sin(\omega_2^-)\, y +O_2.
\end{aligned}
\end{equation}
Since we introduce the symmetric local coordinates in a neighborhood of the point $O^+$, the map $T_4 |_{\Pi_{top}}$ has the form
\begin{equation}\label{eq:T4_top}
T_4 |_{\Pi_{top}}: \;
\begin{aligned}
\bar{x}_1&=B_2 \cos(\omega_2^+) \, x+B_2 \sin(\omega_2^+)\, y +O_2,\\
\bar{y}_1&=B_1 \cos(\omega_1^+) \, x+B_1 \sin(\omega_1^+)\, y +O_2,
\end{aligned}
\end{equation}
where $\omega^+_2=\Omega^+$ for $\theta=\delta=0$. Due to the symmetry, we can choose the coefficients $\omega_i^{\pm}$ so that $|\omega_i^--\omega_i^+|=\pi/2$.

For sufficiently small $\delta<0$ the separatrices $\Gamma^-_1$ and $\Gamma^-_2$, after passing through the cross-section $\Pi_{cyl}$, intersect the cross-sections $\Pi_{bot}$ and $\Pi^-$. So, points on the cross-section $\Pi^-$ with sufficiently small $y_1\neq 0$ return to this cross-section, and one can define the Poincar\'e map $T:\Pi^-\to\Pi^-$, which is a composition of the maps $T_4\circ T_3\circ T_2\circ T_1$, see Fig. \ref{Poincare_map}a.

If $\delta>0$,  then trajectories, starting at the cross-section $\Pi^+$ sufficiently close to the line $y_1=0$, intersect the cross-section $\Pi^{top}$ and, then, arrive to $\Pi^+$, i.e., the map $T=T_4\circ T_3\circ T_2\circ T_1$ acts from $\Pi^-$ to $\Pi^+$ in this case. So we consider the half-map $\hat{T}=S\circ T$, which again takes the cross-section $\Pi^-$ into itself, see Fig. \ref{Poincare_map}b.

\begin{figure}[h]
    \centering
    \includegraphics[width=0.7\linewidth]{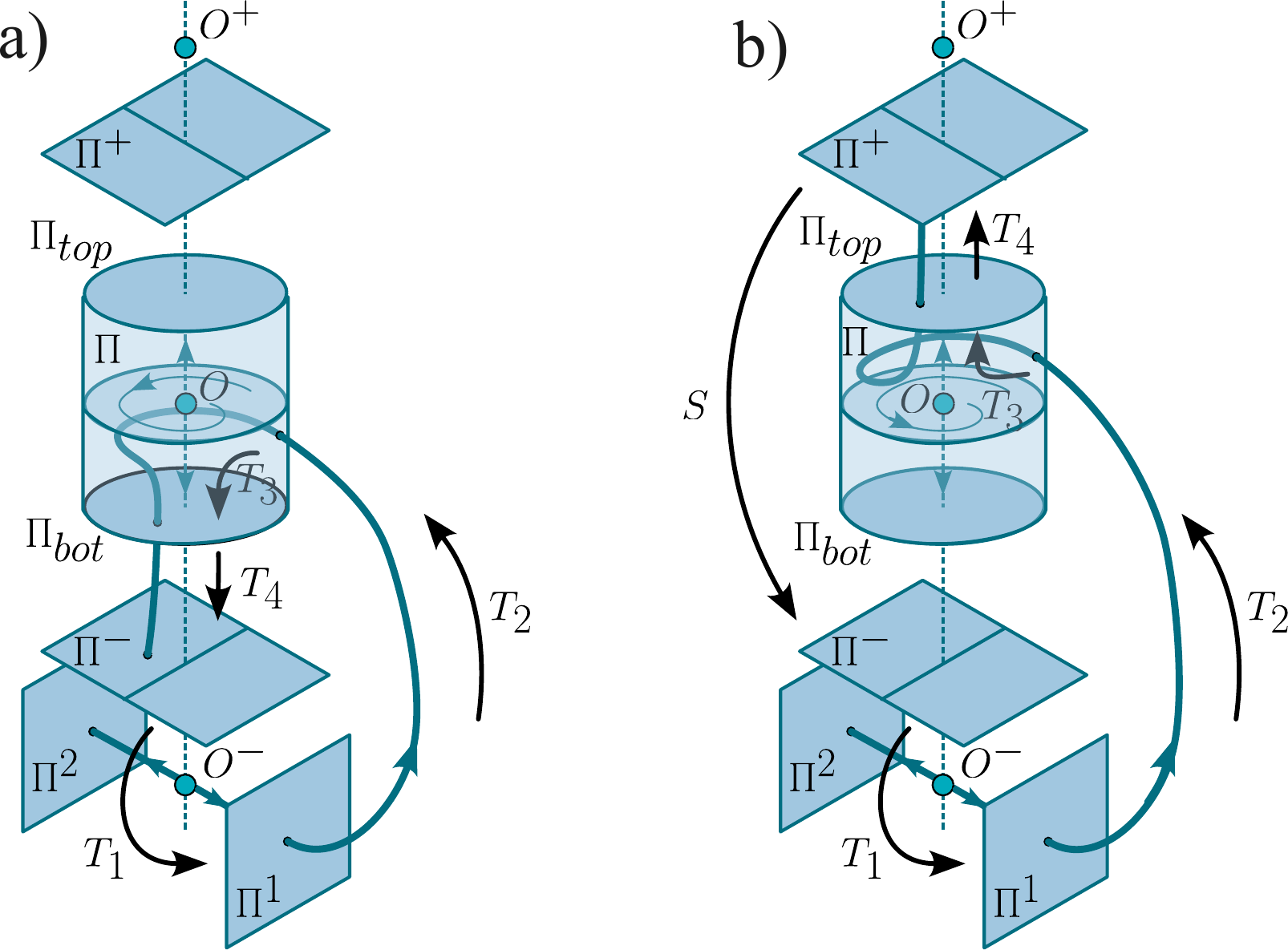}
    \caption{Construction of a) the Poincar\'e map  $T$ in the case $\delta<0$ (the Lorenz attractor case), and b) the half-map $\hat{T}$ in the case $\delta>0$ (the Sim\'o attractor case).}
    \label{Poincare_map}
\end{figure}

\begin{proposition}\label{th:Poincare_map}
Under assumptions B1-B5,  in the $(\theta,\delta)$-plane there are two sequences of regions $\tilde{\mathcal A}^{-}_k$ and $\tilde{\mathcal A}^{+}_k$ such that the following holds:

{\rm (1)} The regions $\tilde{\mathcal A}^{-}_k$ lie in the open quadrants $\{ \delta<0$, $(\Omega^--\pi k)\, \theta<0\}$ (we exclude $k=0$ if $\Omega^{-}=0$) and are given by
\begin{equation}\label{eq:domain_equation_A-}
\tilde{\mathcal A}^{-}_k:\;
\delta=-\exp \left(-\frac{\hat{\nu} |\Omega^--\pi k|}{|\theta|}+\Psi^-_k(\theta)+ p\, \hat{\Psi}^-_k(\theta)\right),
\end{equation}
where $\Psi^-_k(\theta)$ and $\hat{\Psi}^-_k(\theta)\neq 0$ are  uniformly bounded, and $p$ runs an interval between an arbitrarily small $p_1>0$ and an arbitrarily large $p_2>0$.

{\rm (2)} Similarly, the region $\tilde{\mathcal A}^{+}_k$  lies in the open quadrants $\{ \delta>0$, $(\Omega^+-\pi k)\, \theta <0\}$ (we exclude $k=0$ if $\Omega^{-}=0$) and given by
\begin{equation}\label{eq:domain_equation_A+}
\tilde{\mathcal A}^{+}_k:\;
\delta=\exp \left(-\frac{\hat{\nu} |\Omega^+-\pi k|}{|\theta|} +\Psi^+_k(\theta)+ p\, \hat{\Psi}^+_k(\theta)\right),
\end{equation}
where $\Psi^+_k(\theta)$ and $\hat{\Psi}^+_k(\theta)\neq 0$ are  uniformly bounded, and $p$ runs an interval between an arbitrarily small $p_1>0$ and an arbitrarily large $p_2>0$.

{\rm (3)} For any $(\theta,\delta)\in \tilde{A}^{\pm}_k$ there exists a linear rescaling of the coordinates in the cross-section $\Pi^-$ which brings the map $T$ for $\delta<0$ and the map $\hat{T}$ for $\delta>0$ to the form
\begin{equation}\label{eq:T}
\begin{aligned}
\bar{X}&=(1+\hat{c}\, |Y|^\nu+|\delta|^{s}\, \eta_1(X,Y))\cdot {\rm sign}(Y),\\
\bar{Y}&=\sigma \, (-1+c\, |Y|^{\nu}+|\delta|^{s}\, \eta_2(X,Y))\cdot {\rm sign}(Y),
\end{aligned}
\end{equation}
where $\sigma=(-1)^{k}\, {\rm sign}(a_{22}\, (\Omega^{\pm}-\pi k))\, {\rm sign}(\delta)$, the functions $\eta_{1,2}$ satisfy the following estimates
\begin{equation}\label{eq:eta12_estimates}
|\eta_{1,2}|=o(Y^{\tilde{\nu}}), \ \ \ \left|\frac{\partial \eta_{1,2}}{\partial X}\right|=o(|Y|^{\tilde{\nu}}), \ \ \ \left|\frac{\partial \eta_{1,2}}{\partial Y}\right|=o(Y^{\tilde{\nu}-1}),
\end{equation}
and $s$ is a positive constant such that $0<s<\min \left(\nu/\hat{\nu}-1, (\tilde{\nu}-\nu)/\hat{\nu}\right)$.

{\rm (4)} The coefficient $c$ is a positive smooth function of the parameters $\theta$ and $\delta$ such that for a fixed $p\in [p_1,p_2]$
$$
\lim_{(\theta,\delta)\to (0,0)} c=p^{\nu-1}.
$$
The coefficient $\hat{c}=o(|\delta|^s)$ is a smooth function of $\theta$ and $\delta$.
\end{proposition}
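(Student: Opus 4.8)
The plan is to realise the return map as the composition $T=T_4\circ T_3\circ T_2\circ T_1$ of the four maps \eqref{eq:T1_map}--\eqref{eq:T4_top}, to substitute them one into another keeping only leading terms, and then to read off \eqref{eq:T} after an explicit linear rescaling of the coordinates on $\Pi^-$. First I would carry out the substitution. Feeding $T_1$ into $T_2$ and using $\nu<\tilde\nu<\min(2\nu,\nu_{ss})$ to absorb every product of remainders, one gets a point on $\Pi_{cyl}$ with $z=\delta+a_{22}\,|y_1|^{\nu}+o(|y_1|^{\tilde\nu})$ and $\phi=(0\text{ or }\pi)+a_{12}\,|y_1|^{\nu}+o(|y_1|^{\tilde\nu})$. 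The crucial point is that the time $\tau=\frac1{\hat\nu}\ln\frac1{|z|}$ spent near $O$ tends to $+\infty$ like $\frac1{\hat\nu}\ln\frac1{|\delta|}$ as $\delta\to0$, so under $T_3$ the image reaches $\Pi_{bot}$ (for $\delta<0$) or $\Pi_{top}$ (for $\delta>0$) at radius $|z|^{1/\hat\nu}$ and polar angle $\phi+\theta\tau$, which winds by an unboundedly large amount; composing with $T_4$ collapses the two components into $B_i\,|z|^{1/\hat\nu}\cos(\omega_i^{\mp}-\phi-\theta\tau)$. Thus $T$ acquires a closed form whose only non-smooth ingredients are the power $|z|^{1/\hat\nu}$ and the phase $-\frac\theta{\hat\nu}\ln|z|$, and whose dependence on $y_1$ enters through $|y_1|^{\nu}$ inside $z$.

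Second, I would fix the rescaling. The two unstable separatrices of $O^-$ hit $\Pi^-$ at points $M_1=-M_2$ whose distance from $O^-$ is $\sim|\delta|^{1/\hat\nu}$ and whose polar direction is governed by the winding angle $\theta\tau_0$, $\tau_0=\frac1{\hat\nu}\ln\frac1{|\delta|}$. I choose the linear change of coordinates on $\Pi^-$ so that $\Pi^-$ becomes the square $\max(|X|,|Y|)<1+O(\varepsilon)$, the line $W^s_{loc}(O^-)\cap\Pi^-=\{y_1=0\}$ becomes $\{Y=0\}$, and $M_1,M_2$ become the corners $(1,-\sigma),(-1,\sigma)$; this is legitimate because $a_{22}\neq0$ (a consequence of B5, as noted after \eqref{eq:T2_map}) makes the $z_1$-component of $T_2$ non-degenerate. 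Expanding $|z|^{1/\hat\nu}$ and the cosine in $|y_1|^{\nu}$ around $|y_1|=0$ and substituting $|y_1|^{\nu}$ in terms of the rescaled $|Y|^{\nu}$ then produces exactly \eqref{eq:T}: the coefficient of $|Y|^{\nu}$ in $\bar Y$ is the sought $c$, the corresponding coefficient in $\bar X$ is $\hat c$ and is $o(|\delta|^{s})$ because the $X$-direction also carries the strong-stable contraction, and the sign $\sigma=(-1)^{k}\,{\rm sign}(a_{22}(\Omega^{\pm}-\pi k))\,{\rm sign}(\delta)$ is read off from the parity of the number of half-turns of the phase together with the orientation signs of $T_2$ and $T_4$. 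The constant left free in the rescaling is the parameter $p$, and a direct computation yields statement (4), $c\to p^{\nu-1}$ as $(\theta,\delta)\to(0,0)$ along the region.

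Third, I would locate the regions $\tilde{\mathcal A}^{\pm}_k$. The formula \eqref{eq:T} is valid exactly when the image returns into $\Pi^-$ (for $\delta<0$, giving $T$) or into $\Pi^+$ (for $\delta>0$, giving $\hat T=S\circ T$), and when $M_1,M_2$ sit in the admissible position above; both conditions amount to requiring the winding phase $-\frac\theta{\hat\nu}\ln|\delta|$ to lie, modulo $\pi$ and up to a uniformly bounded correction, in an interval whose centre has magnitude $|\Omega^{\pm}-\pi k|$, the integer $k$ labelling which turn of the spiral carries the fold. Since $\ln|\delta|<0$, this is equivalent to $|\ln|\delta||=\frac{\hat\nu|\Omega^{\pm}-\pi k|}{|\theta|}+(\text{uniformly bounded})$; splitting the bounded term into its value at the centre of the interval and the deviation from it produces the functions $\Psi^{\pm}_k(\theta)$ and $p\,\hat\Psi^{\pm}_k(\theta)$ of \eqref{eq:domain_equation_A-}--\eqref{eq:domain_equation_A+}, with $\hat\Psi^{\pm}_k\neq0$ because, by B4 and B5, the separatrix--to--$W^s(O)$ distance and the $W^{uE}(O^-)$--$W^s(O)$ transversality change with non-zero velocity. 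The quadrant $\theta(\Omega^{\pm}-\pi k)<0$ comes from the sign bookkeeping together with $\tau>0$, and $k=0$ is excluded when $\Omega^{\pm}=0$ because then $\Omega^{\pm}-\pi k=0$ would force $|\delta|$ to be $O(1)$, not near the stem point. For $\delta>0$ the analysis of $\hat T=S\circ T$ is identical once one uses that $S$ interchanges $\Pi^-\leftrightarrow\Pi^+$ and $V^-\leftrightarrow V^+$ (their angles differing by $\pi/2$), so $\Omega^+$ replaces $\Omega^-$ everywhere.

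I expect the main obstacle to be the error analysis behind the rescaling: the rescaling is singular as $\delta\to0$ (the $y_1$-scale and the image-scale are related by a fixed power of $\delta$), so one must check that the $o(|y_1|^{\tilde\nu})$-remainders of $T_1$ together with their $y_1$-derivatives, the $O(e^{-2\tau})=O(|\delta|^{2/\hat\nu})$-remainders of $T_3$, and the quadratic remainders of $T_2,T_4$ are all carried into remainders of the form $|\delta|^{s}\eta_{1,2}$ obeying \eqref{eq:eta12_estimates}. The admissible exponents $0<s<\min(\nu/\hat\nu-1,(\tilde\nu-\nu)/\hat\nu)$ are precisely what survives after balancing the scale gap (which uses $\hat\nu<\nu$ from B3) against the gain of $\tilde\nu$ over $\nu$ in $T_1$. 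A secondary point is to verify that the square $\max(|X|,|Y|)<1+O(\varepsilon)$ is forward invariant under \eqref{eq:T}, i.e.\ that the images of its two halves lie back inside it; here one uses B2, B3 and $a_{22}\neq0$ once more.
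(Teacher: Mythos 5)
Your proposal mirrors the paper's proof in its essential structure: compose $T_4\circ T_3\circ T_2\circ T_1$, introduce a linear rescaling of $(x_1,y_1)$ on $\Pi^-$ tied to the landing points $M_{1,2}$ of the separatrices and to the time $\tau_0=-\frac{1}{\hat\nu}\ln|\delta|$ of passage near $O$, locate the regions via the winding phase $\theta\tau_0+\omega_2^{\mp}$ being $\pi k$ up to a term of order $\varepsilon^{1-\nu}$, and push the remainders through a power-counting argument that balances the scale gap $\hat\nu<\nu$ against the gain $\tilde\nu>\nu$ to obtain the exponent $s$. One small correction of emphasis: the parameter $p$ is not a ``constant left free in the rescaling'' — the scaling factors are fully determined by $(\theta,\delta)$ as $\varepsilon_{ss}=h_1(\tau_0,0)$, $\varepsilon=|h_2(\tau_0,0)|$, and $p$ instead parametrises the sheets of $\tilde{\mathcal A}^\pm_k$ in \eqref{eq:domain_equation_A-}--\eqref{eq:domain_equation_A+}; item (4) then asserts $c\to p^{\nu-1}$ along a fixed-$p$ slice, which is a consequence of the constraint \eqref{eq:sin_value} rather than of a choice made in the rescaling.
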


\begin{proof}
For definiteness, we consider the case $\delta<0$. In the case $\delta>0$, the calculations are absolutely similar with the replacement of $\omega_{1,2}^-$ by $\omega_{1,2}^+$. Since the Poincar\'e map $T$ is symmetric with respect to $x_1\to -x_1$, $y_1\to -y_1$ and a linear change of coordinates preserves this symmetry, it is enough to derive equation \eqref{eq:T} for positive $Y$ only.

By \eqref{eq:T1_map} and \eqref{eq:T2_map}, for positive $y_1$ the composition $T_2\circ T_1:\Pi^-\backslash \{y_1=0\}\to\Pi_{cyl}$ can be written as
\begin{equation}\label{eq:T1T2_map}
T_2\circ T_1|_{y_1>0}: \;
\begin{aligned}
\bar{\phi}&=a_{12} \, y_1^{\nu}+o(y_1^{\tilde{\nu}}),\\
\bar{z}&=\delta+a_{22} \, y_1^{\nu}+o(y_1^{\tilde{\nu}}),
\end{aligned}
\end{equation}
If $\delta<0$ and $y_1$ is small enough, then $\bar{z}<0$. Therefore, we only consider the map $T_3$ for negative $z$. By
\eqref{eq:T3_map} and \eqref{eq:T4_bot}, for $z<0$ the composition of the maps $T_3$ and $T_4$ is equal to
\begin{equation}\label{eq:T3T4_delta_negative}
T_4\circ T_3 |_{z<0}: \;
\begin{aligned}
\bar{x}_1&=B_1 e^{-\tau}\, \sin \left(\phi+\theta \tau +\omega^-_1\right)+O(e^{-2\tau}),\\
\bar{y}_1&=B_2 e^{-\tau}\, \sin \left(\phi+\theta \tau +\omega^-_2\right)+O(e^{-2\tau}),
\end{aligned}
\end{equation}
where
\begin{equation}\label{eq:tau_def}
\tau=-\frac{\ln|z|}{\hat{\nu}}.
\end{equation}
For simplicity of notation, we denote the right-hand side of equations \eqref{eq:T3T4_delta_negative} as
\begin{equation}\label{eq:h_definition}
\begin{aligned}
h_{1,2}(\tau,\phi)&=B_{1,2} e^{-\tau}\, \sin \left(\Phi_{1,2}\right)+O(e^{-2\tau}),\\
\end{aligned}
\end{equation}
where
\begin{equation}\label{eq:Phi_definition}
\Phi_{1,2}=\phi+\theta \tau +\omega^-_{1,2}.
\end{equation}
We also denote
\begin{equation}\label{eq:tau0_definition}
\tau_0=-\frac{\ln|\delta|}{\hat{\nu}}, \ \ \ \
\Phi_0=\theta \tau_0+\omega^-_2.
\end{equation}

Recall that $\omega_2^-$ is a function of $(\theta, \delta)$ such that $\omega_2^-(0,0)=\Omega^-$. Let us define the functions $\Psi_k^-$, $\hat{\Psi}_k^-$ in formula \eqref{eq:domain_equation_A-} such that
$$
\Psi_k^-(\theta)=\frac{\hat{\nu}}{\theta}\, \bigl(\omega_2^-(\theta,\delta_0(\theta))-\Omega^- \bigr),
$$
where $\delta_0(\theta)$ is obtained from \eqref{eq:domain_equation_A-} at $p=0$, i.e.,
$$
\delta_0(\theta)=-\exp \left(-\frac{\hat{\nu} |\Omega^--\pi k|}{|\theta|}+\Psi^-_k(\theta)\right),
$$
and
$$
\hat{\Psi}_k^-=\frac{\hat{\nu}}{\theta}\, {\rm sign}(a_{22}\, \theta)\, {\rm sign}(\delta) \,
\left(B_2^{\nu} \bigr(\delta_0(\theta)\bigl)^{\nu/\hat{\nu}-1}\, \frac{|a_{22} \theta|}{ \hat{\nu}}\right)^{1/(1-\nu)}.
$$
Then in the region $\tilde{\mathcal A}^{-}_k$ we have the following relation\footnote{We use the notation $\xi_1\sim \xi_2$ if $\xi_1/\xi_2\to 1$ as $\theta,\delta\to 0$.}:
\begin{equation}\label{eq:Domain_equation}
\Phi_0-\pi k\sim p\cdot {\rm sign}(a_{22}\, \theta)\, {\rm sign}(\delta) \,\left(B_2^{\nu} e^{-\tau_0(\nu-\hat{\nu})}\, \frac{|a_{22} \theta|}{\hat{\nu}}\right)^{1/(1-\nu)}
\end{equation}
or
\begin{equation}\label{eq:sin_value}
\begin{aligned}
\sin(\Phi_0)\sim -p\cdot \sigma\,\left(B_2^{\nu} e^{-\tau_0(\nu-\hat{\nu})}\, \frac{|a_{22} \theta|}{\hat{\nu}}\right)^{1/(1-\nu)},
\end{aligned}
\end{equation}
where
$$\sigma=(-1)^{k+1}\, {\rm sign}(a_{22}\, \theta)\, {\rm sign}(\delta)=(-1)^{k}\, {\rm sign}(a_{22}\, (\Omega^--\pi k))\, {\rm sign}(\delta),$$
and the parameter $p>0$ runs over a compact interval $[p_1,p_2]$. It also follows from formula \eqref{eq:Domain_equation} that
\begin{equation}\label{eq:theta_equiv}
\frac{\theta}{\omega_2^--\pi k} \sim -\frac{1}{\tau_0}=\frac{\hat{\nu}}{\ln |\delta|}.
\end{equation}
Note that if $\Omega^-\neq 0$, then $\omega_2^--\pi k$ is uniformly bounded away from $0$ for all $k\in \mathbb{Z}$ and all sufficiently small $\theta,\delta$. If $\Omega^-=0$, then we exclude $k=0$, and for the rest of $k\in \mathbb{Z}$, the value of $\omega_2^--\pi k$ is uniformly bounded away from $0$.

We rescale the coordinates in the cross-sections $\Pi^-$ and $\Pi_{cyl}$ by the following transformation
\begin{equation}\label{rescaling}
x_1=\varepsilon_{ss}\, X, \ \ y_1=\varepsilon\, Y, \ \
z=|\delta|\, Z, \ \ \phi=\varepsilon^{\nu} \varphi,
\end{equation}
where small $\varepsilon>0$ and $\varepsilon_{ss}$ are given by (see \eqref{eq:h_definition})
\begin{equation}\label{eq:eps_equation}
\begin{aligned}
&\varepsilon_{ss} = h_1(\tau_0,0) = B_1 e^{-\tau_0}\, \sin (\Phi_0+\omega^-_1-\omega^-_2)+O(e^{-2\tau_0}),\\
&\varepsilon = |h_2(\tau_0,0)| = |B_2 e^{-\tau_0}\, \sin (\Phi_0)+O(e^{-2\tau_0})|.
\end{aligned}
\end{equation}
Since $\omega_1^-\neq \omega_2^-$, it follows from \eqref{eq:sin_value} that
\begin{equation}\label{eq:order_of_tilde_eps}
\varepsilon_{ss}\sim (-1)^k B_1 e^{-\tau_0} \sin(\omega_1^--\omega_2^-).
\end{equation}
It also follows from \eqref{eq:sin_value} and assumption A3 that  $\sin(\Phi_0)\gg e^{-\tau_0}$. Hence
\begin{equation}\label{eq:sign_of_h_2}
h_2(\tau_0,0)\sim B_2 e^{-\tau_0}\, \sin(\Phi_0),  \ \ \  \ {\rm sign}(h_2(\tau_0,0))=-\sigma,
\end{equation}
and
\begin{equation}\label{eq:order_of_eps}
\varepsilon^{1-\nu}\sim |B_2 e^{-\tau_0}\, \sin (\Phi_0)|^{1-\nu}\sim p^{1-\nu}\cdot  B_2 e^{-(1-\hat{\nu})\tau_0}\, \frac{|a_{22}\theta|}{\hat{\nu}}\sim p^{1-\nu}\cdot B_2 \frac{e^{-(1-\hat{\nu})\tau_0}}{\tau_0}\, \frac{|a_{22}(\omega_2^--\pi k)|}{\hat{\nu}}.
\end{equation}

After scaling \eqref{rescaling}, the map $T_2\circ T_1$ given by \eqref{eq:T1T2_map} takes the form
\begin{equation}\label{eq:T1T2plus_rescaled}
T_2\circ T_1|_{Y>0}: \;
\begin{aligned}
\bar{\phi}&=a_{12}\, Y^{\nu}+ \varepsilon^{\tilde{\nu}-\nu}\, o(Y)^{\tilde{\nu}},\\
\bar{Z}&={\rm sign}(\delta)+\frac{\varepsilon^{\nu}}{|\delta|} \left(a_{22}\, Y^{\nu}+\varepsilon^{\tilde{\nu}-\nu}\, o(Y)^{\tilde{\nu}}\right),
\end{aligned}
\end{equation}
and the map $T_4\circ T_3$ given by \eqref{eq:T3T4_delta_negative} takes the form
\begin{equation}\label{eq:T3T4_rescaled}
T_4\circ T_3 |_{Z<0}:\,
\begin{aligned}
\varepsilon_{ss}\, \bar{X}&= h_1\bigl(\tau_0-\ln|Z|/\hat{\nu},\varepsilon^{\nu} \varphi\bigr),\\
\varepsilon\, \bar{Y}&=h_2\bigl(\tau_0-\ln|Z|/\hat{\nu},\varepsilon^{\nu} \varphi\bigl).
\end{aligned}
\end{equation}
Consider a subdomain of $\Pi^-$ given by $0<Y<1$. Note that by \eqref{eq:order_of_eps} we have
\begin{equation}\label{eq:eps_delta_relation}
\frac{\varepsilon^{\nu}}{|\delta|}=\varepsilon^{\nu} e^{\hat{\nu} \tau_0}\sim  const\,\cdot \left(\frac{e^{-(\nu-\hat{\nu})\tau_0}}{\tau_0^{\nu}}\right)^{\frac{1}{1-\nu}}=O(e^{-(\nu-\hat{\nu})\tau_0})=O(|\delta|^{\nu/\hat{\nu}-1}),
\end{equation}
so $\varepsilon^{\nu}$ is much smaller than $|\delta|$. In particular, for $0<Y<1$ and sufficiently small $\varepsilon$, the coordinate $\bar{Z}$ in \eqref{eq:T1T2plus_rescaled} is close to ${\rm sign}(\delta)=-1$. So for such $Y$ the map $T$ is the composition of \eqref{eq:T1T2plus_rescaled} and \eqref{eq:T3T4_rescaled}. Let us show that this composition can be brought to form \eqref{eq:T}.

Denote
\begin{equation}\label{eq:hat_c_def}
\hat{c}=\frac{1}{\varepsilon_{ss}}\, \left(\frac{\partial h_1(\tau_0,0)}{\partial \tau}\, \left(-\frac{\varepsilon^{\nu}}{\delta} \frac{a_{22}}{\nu}\right)+\frac{\partial h_1(\tau_0,0)}{\partial \phi}\, \varepsilon^{\nu} a_{12} \right),
\end{equation}
\begin{equation}\label{eq:c_def}
c=\frac{\sigma}{\varepsilon}\, \left(\frac{\partial h_2(\tau_0,0)}{\partial \tau}\,\left(-\frac{\varepsilon^{\nu}}{\delta} \frac{a_{22}}{\nu}\right)+\frac{\partial h_2(\tau_0,0)}{\partial \phi}\,  \varepsilon^{\nu} a_{12} \right),
\end{equation}
and let
\begin{equation}\label{eq:tilde_eta_def}
\begin{aligned}
\tilde{\eta}_{1}(X,Y)&=\frac{1}{\varepsilon_{ss}}\, h_1\bigl(\tau_0-\ln|\bar{Z}|/\hat{\nu},\varepsilon^{\nu}\bar{\varphi}\bigr)-\hat{c}\, |Y|^{\nu}-1,\\
\tilde{\eta}_{2}(X,Y)&=\frac{\sigma}{\varepsilon}\, h_2\bigl(\tau_0-\ln|\bar{Z}|/\hat{\nu},\varepsilon^{\nu}\bar{\varphi}\bigr)-c\, |Y|^{\nu}+1,
\end{aligned}
\end{equation}
where $\bar{Z}, \bar{\varphi}$ are given by \eqref{eq:T1T2plus_rescaled} and $h_{1,2}$ are given by \eqref{eq:h_definition}. It follows from
\eqref{eq:T1T2plus_rescaled} and \eqref{eq:T3T4_rescaled} that in this notation the map $T$ is given by
$$
T:\,
\begin{aligned}
\bar{X}&=(1+\hat{c}\, |Y|^\nu+ \tilde{\eta}_1(X,Y))\cdot {\rm sign}(Y),\\
\bar{Y}&=\sigma \, (-1+c\, |Y|^{\nu}+\tilde{\eta}_2(X,Y))\cdot {\rm sign}(Y).
\end{aligned}
$$
This map has the same form as map \eqref{eq:T}. Therefore, to prove the proposition, we need to establish asymptotics for $c$ and $\hat{c}$, and estimate the functions $\tilde{\eta}_{1,2}$.

To do this, note that the derivatives of $h_{1,2}$ are equal to
\begin{equation}\label{eq:derivative_of_h}
\begin{aligned}
\frac{\partial h_{1,2}(\tau,\phi)}{\partial \tau}&=B_{1,2} e^{-\tau} \bigl(-\sin (\Phi_{1,2})+\theta \cos (\Phi_{1,2}) \bigr)+O(e^{-2\tau})=O(e^{-\tau}),\\
\frac{\partial h_{1,2}(\tau,\phi)}{\partial \phi}&=B_{1,2} e^{-\tau} \cos (\Phi_{1,2}) +O(e^{-2\tau})=O(e^{-\tau}).
\end{aligned}
\end{equation}
By \eqref{eq:sin_value} and \eqref{eq:theta_equiv}, we also have $\sin(\Phi_0) \ll \theta$. Therefore
\begin{equation}\label{eq:h2_tau_deriv}
\frac{\partial h_{2}(\tau_0,0)}{\partial \tau}\sim B_{2} e^{-\tau_0}\, (-1)^k  \theta.
\end{equation}
It follows from \eqref{eq:order_of_tilde_eps} and \eqref{eq:derivative_of_h}  that
$$
\frac{1}{\varepsilon_{ss}}\,\frac{\partial h_{1}(\tau_0,0)}{\partial (\tau,\phi)}=O(1),
$$
and hence, by \eqref{eq:hat_c_def}, $\hat{c}=O(\varepsilon^\nu/\delta)$. By \eqref{eq:theta_equiv}, we have $\theta\gg \delta$, and, therefore, by \eqref{eq:order_of_eps}, \eqref{eq:c_def} and \eqref{eq:h2_tau_deriv},
$$
c\sim \frac{\sigma}{\varepsilon} \cdot  \frac{\partial h_{2}(\tau_0,0)}{\partial \tau} \cdot \left(-\frac{\varepsilon^{\nu}}{\delta} \frac{a_{22}}{\hat{\nu}}\right)\sim B_2 e^{-(1-\hat{\nu})\tau_0}\, \frac{|a_{22} \theta|}{\hat{\nu}}\, \varepsilon^{\nu-1}\sim p^{\nu-1}.
$$
This is in an agreement with item (4) of the proposition.

Next, we estimate the functions $\tilde{\eta}_{1,2}(X,Y)$.
By \eqref{eq:T1T2plus_rescaled}, we have
\begin{equation}\label{eq:tau_tau_0_difference}
-\frac{\ln |\bar{Z}|}{\hat{\nu}}=-\frac{\varepsilon^{\nu}}{\delta} \frac{a_{22}}{\hat{\nu}}\, |Y|^{\nu}+\max \left(\frac{\varepsilon^{2\nu}}{|\delta|^2}, \frac{\varepsilon^{\tilde{\nu}}}{|\delta|}\right)\,  o(|Y|^{\tilde{\nu}}).
\end{equation}
It follows from \eqref{eq:tau_tau_0_difference} that
$$
\frac{\partial h_{1,2}(\tau_0-\ln|\bar{Z}|/\hat{\nu},\varepsilon^{\nu}\bar{\varphi})}{\partial (\tau,\phi)}=\frac{\partial h_{1,2}(\tau_0,0)}{\partial (\tau,\phi)}\, \left(1+\frac{\varepsilon^{\nu}}{\delta}\, O(|Y|^{\nu})\right)
$$
Therefore, by \eqref{eq:T1T2plus_rescaled} and \eqref{eq:tau_tau_0_difference},
\begin{multline}\label{eq:Y_deriv_result}
\frac{\partial h_{1,2}\bigl(\tau_0-\ln|\bar{Z}|/\hat{\nu},\varepsilon^{\nu}\bar{\varphi}\bigr)}{\partial Y}=\\
=\frac{\partial h_{1,2}\bigl(\tau_0-\ln|\bar{Z}|/\hat{\nu},\varepsilon^{\nu}\bar{\varphi}\bigr)}{\partial \tau}\, \frac{\partial (-\ln|\bar{Z}|/\hat{\nu})}{\partial Y}
+\frac{\partial h_{1,2}\bigl(\tau_0-\ln|\bar{Z}|/\hat{\nu},\varepsilon^{\nu}\bar{\varphi}\bigr)}{\partial \phi}\, \frac{\partial (\varepsilon^{\nu}\bar{\varphi})}{\partial Y}=\\
=\left(-\frac{\partial h_{1,2}(\tau_0,0)}{\partial \tau}\, \frac{\varepsilon^{\nu}}{\delta} \, \frac{a_{22}}{\nu}+
\frac{\partial h_{1,2}(\tau_0,0)}{\partial \phi}\, \varepsilon^{\nu} a_{12}\right) \, \frac{\partial |Y|^{\nu}}{\partial Y}+e^{-\tau_0}\, \max \left(\frac{\varepsilon^{2\nu}}{|\delta|^{2}},\frac{\varepsilon^{\tilde{\nu}}}{|\delta|} \right)\, o(|Y|^{\tilde{\nu}-1})
\end{multline}
and
\begin{multline}\label{eq:X_deriv_result}
\frac{\partial h_{1,2}(\tau_0-\ln|\bar{Z}|/\hat{\nu},\varepsilon^{\nu}\bar{\varphi})}{\partial X}=\\
=\frac{\partial h_{1,2}\bigl(\tau_0-\ln|\bar{Z}|/\hat{\nu},\varepsilon^{\nu}\bar{\varphi}\bigr)}{\partial \tau}\, \frac{\partial (-\ln|\bar{Z}|/\hat{\nu})}{\partial X}
+\frac{\partial h_{1,2}\bigl(\tau_0-\ln|\bar{Z}|/\hat{\nu},\varepsilon^{\nu}\bar{\varphi}\bigr)}{\partial \phi}\, \frac{\partial (\varepsilon^{\nu}\bar{\varphi})}{\partial X}=\\
=e^{-\tau_0}\, \max \left(\frac{\varepsilon^{2\nu}}{|\delta|^2},\frac{\varepsilon^{\tilde{\nu}}}{|\delta|} \right)\, o(|Y|^{\tilde{\nu}}).
\end{multline}
Comparing \eqref{eq:Y_deriv_result}, \eqref{eq:X_deriv_result}  with  \eqref{eq:hat_c_def}, \eqref{eq:c_def} and \eqref{eq:tilde_eta_def},
we obtain
$$
\tilde{\eta}_1(X,Y)=\frac{e^{-\tau_0}}{\varepsilon_{ss}}\, \max \left(\frac{\varepsilon^{2\nu}}{|\delta|^2},\frac{\varepsilon^{\tilde{\nu}}}{|\delta|} \right)\, o(|Y|^{\tilde{\nu}}),
$$
$$
\tilde{\eta}_2(X,Y)=\frac{e^{-\tau_0}}{\varepsilon}\, \max \left(\frac{\varepsilon^{2\nu}}{|\delta|^2},\frac{\varepsilon^{\tilde{\nu}}}{|\delta|} \right)\, o(|Y|^{\tilde{\nu}}).
$$
Then, we have, by  \eqref{eq:order_of_eps},
$$
\frac{e^{-\tau_0}}{\varepsilon}\, \max \left(\frac{\varepsilon^{2\nu}}{|\delta|^2},\frac{\varepsilon^{\tilde{\nu}}}{|\delta|} \right)=\frac{e^{-(1-\hat{\nu})\tau_0}}{\varepsilon^{1-\nu}}\, \max \left(\frac{\varepsilon^{\nu}}{|\delta|},\varepsilon^{\tilde{\nu}-\nu} \right)\sim \frac{\rm const}{\theta}\, \max \left(\frac{\varepsilon^{\nu}}{|\delta|},\varepsilon^{\tilde{\nu}-\nu}\right),
$$
and, by \eqref{eq:theta_equiv} and \eqref{eq:eps_delta_relation},
$$
\frac{\rm 1}{\theta}\, \max \left(\frac{\varepsilon^{\nu}}{|\delta|},\varepsilon^{\tilde{\nu}-\nu}\right)= \ln|\delta|\cdot O \left(|\delta|^{\nu/\hat{\nu}-1}+|\delta|^{(\tilde{\nu}-\nu)/\hat{\nu}}\right)=o(\delta^{s}),
$$
where $0<s<\min(\nu/\hat{\nu}-1,(\tilde{\nu}-\nu)/\hat{\nu})$. Similarly,
$$
\frac{e^{-\tau_0}}{\varepsilon_{ss}}\, \max \left(\frac{\varepsilon^{2\nu}}{|\delta|^2},\frac{\varepsilon^{\tilde{\nu}}}{|\delta|} \right)=\frac{\varepsilon}{\varepsilon_{ss}}\, \frac{e^{-\tau_0}}{\varepsilon}\, \max \left(\frac{\varepsilon^{2\nu}}{|\delta|^2},\frac{\varepsilon^{\tilde{\nu}}}{|\delta|} \right)=o(\delta^{s}).
$$
Thus, the functions $\delta^{-s} \tilde{\eta}_{1,2}(X,Y)$ satisfy conditions \eqref{eq:eta12_estimates} of item 3 of the proposition. This completes the proof.
\end{proof}

To prove the existence of pseudohyperbolic attractors (Lorenz attractors in the regions $\mathcal A_k^-$ and Sim\'o attractors in the regions $\mathcal A_k^+$), we need to show that map \eqref{eq:T} is singular hyperbolic for an interval of values of $p$. We choose a compact interval $[p_1,p_2]$  that strictly  covers the interval $c(p)\in [1,2]$. For $c< 2$ and sufficiently small $\delta$, the open subdomain of $\Pi^-$ given by the inequality $|Y|<1$ is forward invariant with respect to the map. We need to establish the existence of invariant cone fields in this subdomain. The derivative of $T$ is equal to
$$
DT=\begin{pmatrix}
   \frac{\partial \bar{X}}{\partial X} & \frac{\partial \bar{X}}{\partial Y} \\
    \frac{\partial \bar{Y}}{\partial X} & \frac{\partial \bar{Y}}{\partial Y}
\end{pmatrix}=
\begin{pmatrix}
    |\delta|^{s} o(|Y|^{\tilde{\nu}}) & |\delta|^{s} \, O(|Y|^{\nu-1}) \\
    |\delta|^{s} o(|Y|^{\tilde{\nu}}) & c\nu\, |Y|^{\nu-1}+ |\delta|^{s} \, o(|Y|^{\tilde{\nu}-1})
\end{pmatrix}.
$$
It is easy to check that there are cone fields of the form
$$
C^{ss}: \, \{\, o(|\delta|^s) |Y|^{\tilde{\nu}-\nu+1}\,  |dX|\geq |dY|\, \}, \ \ \ C^u: \, \{\, |dX|\leq O(|\delta|^s) \,|dY|\, \},
$$
which are invariant with respect to $DT$. The derivative $DT$ contracts every vector of $C^{ss}$ by the factor $O(|\delta|^s) |Y|^{\tilde{\nu}}\ll 1$. The vectors in $C^{u}$ are multiplied by the factor $(c\nu+o(|\delta|^s))|Y|^{\nu}$. If $c\nu>1$, then, for sufficiently small $\delta$, the derivative $DT$ is uniformly expanding in $C^u$ and the map $T$ is singular hyperbolic. This proves Theorem \ref{th6} for $\nu>1/2$.

In general case, the question of the singular hyperbolicity of the map $T$ becomes non-trivial because we do not have an immediate expansion in $C^u$. We give a complete solution to this problem in \cite{preprint2}. There we describe the exact region in the $(c,\nu)$-plane where the map $T$ has a singular hyperbolic attractor ($DT$ becomes expanding in $C^u$ for an appropriate choice of variables).  In particular, results of  \cite{preprint2} imply

\begin{theorem}\label{th7}
There is an open connected region of values of $c$ and $\nu$ such that map \eqref{eq:T} has a singular hyperbolic attractor for sufficiently small $\delta$. This region for all $\nu\in (0,1)$ contains an interval of $c$ from $[1,2]$.
\end{theorem}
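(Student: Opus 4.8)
The plan is to reduce map \eqref{eq:T} to a one-dimensional factor map and then to determine when that map is expanding in a suitable metric; the one-dimensional analysis is carried out in \cite{preprint2}, which I sketch here.

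\emph{Reduction to one dimension.} By \eqref{eq:eta12_estimates} the entries $\partial\bar X/\partial X$ and $\partial\bar Y/\partial X$ of $DT$ are $|\delta|^{s}\,o(|Y|^{\tilde\nu})$, so on the forward-invariant square $\{|X|<1,\,|Y|<1\}$ (recall $c<2$) the strong-stable cone field $C^{ss}$ around the $X$-axis is $DT$-invariant and uniformly contracted, while the complementary cone field $C^{u}$ is $DT$-invariant with vectors stretched by $(c\nu+o(|\delta|^{s}))|Y|^{\nu-1}$. A graph-transform argument then places the attractor on a $DT$-invariant $C^{1}$ curve $X=\Xi_{\delta}(Y)$ with $\Xi_{\delta},\Xi_{\delta}'=O(|\delta|^{s})$, so that the dynamics on the attractor is conjugate to the one-dimensional factor map
\[
f_{\delta}(Y)=\sigma\bigl(-1+c\,|Y|^{\nu}+|\delta|^{s}\,\tilde\eta(Y)\bigr)\,{\rm sign}(Y),\qquad |\tilde\eta|=o(|Y|^{\tilde\nu}),\ \ |\tilde\eta'|=o(|Y|^{\tilde\nu-1}).
\]
Since $\tilde\nu>\nu$, the $|\delta|^{s}$-terms are dominated by the $|Y|^{\nu-1}$-expansion, and since singular hyperbolicity (Definition~\ref{def:SingularHuyperbolicity}) is a $C^{1}$-open property, it is enough to show, for $(c,\nu)$ in an appropriate region, that the limit map $f_{0}(Y)=\sigma(-1+c\,|Y|^{\nu})\,{\rm sign}(Y)$ on $[-1,1]\setminus\{0\}$ admits a homeomorphism $\psi$ of $[-1,1]$, smooth off the origin, such that $g=\psi\circ f_{0}\circ\psi^{-1}$ is uniformly expanding; in the coordinate $\psi$ the map $T$ for $\delta<0$ and the half-map $\hat T$ for $\delta>0$ then satisfy the Afraimovich-Bykov-Shilnikov cone conditions, which --- combined with Proposition~\ref{th:Poincare_map} --- gives the symmetric pairs of Lorenz attractors in the regions $\mathcal A_{k}^{-}$ and the four-winged Sim\'o angels in $\mathcal A_{k}^{+}$ of Theorem~\ref{th6}.

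\emph{The one-dimensional map.} The map $f_{0}$ has two branches related by $Y\mapsto-Y$, with $f_{0}(\pm0)=\mp\sigma$, and $|f_{0}'(Y)|=c\nu|Y|^{\nu-1}$, which is enormous near the discontinuity $Y=0$ and equals $c\nu$ at $|Y|=1$; when $c\nu>1$ the map is already uniformly expanding and the theorem follows at once (this handles $\nu>1/2$ with $c$ near the top of the range, cf.\ the discussion preceding the theorem). In general I would: (i) locate the fixed points and short periodic orbits of $f_{0}$ and compute their multipliers as explicit functions of $(c,\nu)$, together with the itinerary of the discontinuity point; (ii) prove, via a bounded-distortion estimate for $f_{0}$ away from $Y=0$, that $f_{0}$ admits an adapted expanding metric precisely when no periodic orbit has multiplier of modulus $\le 1$ and the orbit of the discontinuity does not accumulate on a periodic orbit, so that the admissible region $\mathcal R\subset\{0<\nu<1\}$ is cut out by countably many analytic inequalities along periodic orbits, of which only finitely many are active once $\delta$ is small; (iii) construct $\psi$ --- equivalently the density of the adapted metric --- on $\mathcal R$ by a summation over preimages of the natural weights, with estimates uniform in $\nu$. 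Openness of $\mathcal R$ then follows from (ii) and the finiteness of the active inequalities together with continuous dependence of $f_{0}$ on $(c,\nu)$; connectedness of $\mathcal R$ and the fact that $\mathcal R\cap\{\nu=\nu_{0}\}$ meets $[1,2]$ for every $\nu_{0}\in(0,1)$ would follow from the explicit description of $\mathcal R$ in (i)--(ii): monotonicity of the dominant multipliers in $c$ makes each $\nu$-section of $\mathcal R$ an interval, and the multiplier formulas pin down the subinterval of $[1,2]$ contained in it.

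The hard part is (ii)--(iii): producing the adapted expanding metric uniformly as $\nu\to0$, where $|f_{0}'|$ sweeps all of $[c\nu,\infty)$ so that the bounded-distortion control is delicate, and then showing that only finitely many periodic-orbit conditions matter and that $\mathcal R$ is connected and meets the segment $c\in[1,2]$ for every $\nu$. This is exactly the analysis developed in \cite{preprint2}.
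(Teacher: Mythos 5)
The paper does not actually prove Theorem~\ref{th7}: the discussion preceding the statement establishes only the easy case $c\nu>1$ (which covers every $\nu>1/2$, since then $(1/\nu,2)\subset[1,2]$ is nonempty), and the full result is deferred entirely to \cite{preprint2}. Your proposal has the same structure --- you treat $c\nu>1$ directly and defer the rest to \cite{preprint2} --- so at the level of what is actually argued the two coincide. The additional speculative sketch of what \cite{preprint2} must contain deserves two caveats. The graph transform does not place the attractor on a $DT$-invariant $C^1$ curve $X=\Xi_\delta(Y)$: a singular-hyperbolic attractor is a two-dimensional Cantor-book, not a curve; what the dominated splitting from \eqref{eq:eta12_estimates} furnishes is an invariant $C^1$ stable foliation of the forward-invariant square, and the one-dimensional factor map $f_\delta$ is the quotient of $T$ (resp.\ $\hat T$) by that foliation, to which the attractor is semi-conjugate. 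More importantly, the criterion you state in step (ii) --- that $f_0$ admits an adapted expanding metric precisely when every periodic orbit is a uniform repeller and the critical orbit does not accumulate on a periodic orbit --- is not the accepted criterion for one-dimensional Lorenz maps and is likely false as stated: when $c\nu\ll1$ most of $[-1,1]$ has $|f_0'|<1$, and absence of sinks is not by itself enough to produce a uniformly expanding metric. What governs expandability is the kneading data of the critical values $f_0(\pm0)=\mp\sigma$ and the accumulated expansion along their forward orbit before it re-enters a neighbourhood of the discontinuity $Y=0$; a correct version of step (ii) would have to be phrased in those terms, which is consistent with the paper's remark that \cite{preprint2} describes ``the exact region in the $(c,\nu)$-plane.'' These caveats concern the soundness of your sketch rather than fidelity to the paper, which itself supplies no argument for Theorem~\ref{th7} beyond the citation.
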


\noindent Proposition \ref{th:Poincare_map} and Theorem \ref{th7} give Theorem \ref{th6}.

Now we finish the proof of Theorem~\ref{th3}.

\begin{proposition}
Let $A<0$ in system \eqref{FlowNormalFormRescaled} and let $\rho=h(\mu,\omega)$ be the curve of the four-winged heteroclinic bifurcation given by \eqref{eq:het_curve}. Then, for all sufficiently small fixed $\mu>0$, system \eqref{FlowNormalFormRescaled} satisfies assumptions B1-B5 with $\theta$ and $\delta$ being equal to $\omega/\rho$ and $h(\mu,\omega)-\rho$, respectively.
\end{proposition}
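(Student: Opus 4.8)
\smallskip
\noindent\textbf{Proof plan.} The plan is to verify the five assumptions B1--B5 for system \eqref{FlowNormalFormRescaled} one at a time, with the identifications $\theta=\omega/\rho$, $\delta=h(\mu,\omega)-\rho$. Since the map $(\omega,\rho)\mapsto(\theta,\delta)$ is a local diffeomorphism near $(\omega,\rho)=(0,h(\mu,0))$, varying $(\omega,\rho)$ there is the same as varying $(\theta,\delta)$ near the origin, with $(\theta,\delta)=(0,0)$ at $\omega=0$, $\rho=h(\mu,0)$. All the spectral and geometric data needed is already available: the germs at $O$ and at $O^{\pm}$ come from \eqref{FlowNormalFormRescaled} and from the linearisations computed in the proof of Theorem~\ref{ThreeZeros}, and the heteroclinic connection comes from Theorem~\ref{th5}.

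\emph{B1--B3 (local structure).} The linearisation of \eqref{FlowNormalFormRescaled} at $O=(0,0,0)$ has the eigenvalues $-\rho\pm i\omega$ in the $u$-plane and $\kappa:=\sqrt{\mu}\,\sqrt{|b_1|}/|a_0|+O(\mu)$ along the invariant $z$-axis (the latter is the derivative at $z=0$ of the $z$-axis dynamics $\dot z=\sqrt{\mu}\,(\sqrt{|b_1|}/|a_0|)(z-z^3)+\dots$; note the linearisation at $O$ is block diagonal, so the $z$-axis is an eigendirection). Hence $W^{u}(O)$ is the segment of the $z$-axis between $O^{-}$ and $O^{+}$, $W^{s}(O)$ is two-dimensional, and $O$ is a saddle-focus for $\omega\neq0$ and a dicritical node on $W^{s}(O)$ for $\omega=0$; rescaling time by $\rho$ brings the germ at $O$ to the form \eqref{LinearFlowO}--\eqref{LinearFlowO_high_terms} with $\theta=\omega/\rho$, $\hat\nu=\kappa/\rho$, by the standard $S$-equivariant reduction to Shilnikov coordinates \cite{book1,book2}. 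This is B1. At $O^{-}$ (and, by the symmetry, at $O^{+}$) the computation in the proof of Theorem~\ref{ThreeZeros} gives the eigenvalues $\lambda_{1,2}=-\rho\pm\sqrt{1-\omega^{2}}+O(\sqrt{\mu})$ of the $(x,y)$-block and $\lambda_{3}=-2\kappa+O(\mu)$ along the $z$-axis; since $\rho=h(\mu,\omega)=\tfrac12+O(\sqrt{\mu}+\omega)$, one has $\lambda_{1}>0>\lambda_{3}>\lambda_{2}$ with $\lambda_{1}\to\tfrac12$, $\lambda_{2}\to-\tfrac32$, $\lambda_{3}\to0$, so $O^{-}$ is a saddle whose leading stable direction is the $z$-axis, and there are no further equilibria on the $z$-axis between $O^{-}$ and $O^{+}$ because the $z$-axis dynamics has only the zeros $0,\pm1$. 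Rescaling time by $\lambda_{1}$ brings the germ at $O^{-}$ to the form \eqref{LinearFlowO-}--\eqref{LinearFlowO-_high_terms}, $S^{2}$-equivariantly, with $\nu=-\lambda_{3}/\lambda_{1}$, $\nu_{ss}=-\lambda_{2}/\lambda_{1}$, so $0<\nu<\nu_{ss}$; this is B2. Finally $\hat\nu/\nu=\lambda_{1}/(2\rho)\to\tfrac12$ and $\hat\nu,\nu=O(\sqrt{\mu})\to0$ as $(\mu,\omega)\to(0,0)$ along $\rho=h(\mu,\omega)$, so $0<\hat\nu<\nu<(1+\hat\nu)/2<1$ for all sufficiently small $\mu>0$ and $\omega$; since these inequalities are strict, B3 holds for all $(\theta,\delta)$ near the origin.

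\emph{B4 (heteroclinic connection).} This is exactly Theorem~\ref{th5}: at $\rho=h(\mu,\omega)$, i.e.\ $\delta=0$, system \eqref{FlowNormalFormRescaled} has the $S$-symmetric four-winged connection with $\Gamma_{1,2}^{\pm}\subset W^{s}(O)$, and these connections split with nonzero velocity as $\rho$ varies, hence also as $\delta$ varies since $\partial\delta/\partial\rho=-1$. As explained before Proposition~\ref{th:Poincare_map}, a rescaling of $z$ near $O$ together with a rotation in the $(x,y)$-plane then normalises the intersection of $\Gamma_{1}^{-}$ with $\Pi_{cyl}$ to $(z,\phi)=(\delta,0)$, without changing the forms \eqref{LinearFlowO}, \eqref{LinearFlowO-}.

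\emph{B5 (transversality) --- the main obstacle.} What remains is the transversality of $W^{uE}(O^{-})$ and $W^{s}(O)$ along $\Gamma_{1}^{-}$ at $\theta=\delta=0$, i.e.\ at $\omega=0$, $\rho=h(\mu,0)$. As observed in the proof of Proposition~\ref{th:Poincare_map}, this is equivalent to $a_{22}\neq0$, i.e.\ to $\partial\bar z/\partial z\neq0$ at the origin for the global map $T_{2}$, and $a_{22}$ can be read off from the variational equation along the heteroclinic orbit. I would compute it in the (singular) limit $\mu\to0^{+}$, $\omega=0$: there the heteroclinic orbit tends to the explicit solution $\gamma_{0}(t)=(x_{0}(t),0,z_{0}(t))$ of Theorem~\ref{th5}, which lies in the plane $\{y=0\}$; at $\omega=\mu=0$ the unstable and leading-stable eigendirections of $O^{-}$ are the $x$- and $z$-axes (the $z$-axis is always an eigendirection, the unstable one being $O(\sqrt{\mu}+\omega)$-close to the $x$-axis for small parameters), so $W^{uE}(O^{-})$ stays $O(\sqrt{\mu}+\omega)$-close to $\{y=0\}$ along the orbit, while $T_{O}W^{s}(O)=\{z=0\}$ exactly, and, since $x_{0}(t)\sim e^{-t/2}\gg z_{0}(t)\sim e^{-t}$ as $t\to+\infty$, the orbit reaches $O$ along the $x$-axis; the planes $\{y=0\}$ and $\{z=0\}$ meet transversally in the $x$-axis, with an $O(1)$ margin. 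Equivalently, in the variational language of the proof of Theorem~\ref{th5} this non-degeneracy is precisely the fact that the in-plane fundamental solution $(X_{2},Z_{2})$ has $Z_{2}(t)\to\mp2|A|\neq0$ (which uses only $A\neq0$): a perturbation in the leading-stable ($z$-)direction at $\Pi_{1}$ is carried along $\gamma_{0}$ by the $(X_{2},Z_{2})$ mode and arrives at $\Pi_{cyl}$ with $z$-component $\sim Z_{2}(t)\to-2|A|\neq0$, so $a_{22}\neq0$. Transversality being an open condition, it then persists for all sufficiently small $\mu>0$ and $\omega$. The delicate point, which I expect to require the most care, is this continuity step: $\mu=0$ is singular for the rescaled system (the whole $z$-axis becomes a line of equilibria and $W^{uE}(O^{-})$ degenerates there), so the comparison with the planar picture must be carried out through uniform $O(\sqrt{\mu}+\omega)$ control of the heteroclinic orbit and of the tangent-plane field along it; this can be obtained by running the contraction-mapping estimates of the proof of Theorem~\ref{th5} for the first-variation equation as well, or, equivalently, by evaluating $a_{22}$ directly from \eqref{eq:fundamental_sol}, \eqref{eq:G_def} with the $\sqrt{\mu}$-perturbations retained. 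Once B1--B5 are established, the Proposition follows.
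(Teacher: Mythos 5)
Your proposal follows essentially the same path as the paper: B1, B2 are routine, B3 is checked via the eigenvalue/saddle-index asymptotics at $O$ and $O^{\pm}$ as $(\rho,\mu)\to(1/2,0)$, B4 is Theorem~\ref{th5}, and B5 is verified at the degenerate parameter values $\omega=\mu=0$ where the relevant invariant manifolds are explicit coordinate planes, with persistence by openness. Your eigenvalue computations and the ratio $\hat\nu/\nu\to 1/2$ match the paper's, and the variational-equation reformulation of $a_{22}\neq 0$ via $Z_2(t)\to\mp 2|A|$ is a valid (if unnecessary) supplement.

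One concrete error in the B5 step: you attribute to $O^-$ the eigendirections and heteroclinic orbit that actually belong to $O^+$. At $O^-(0,0,-1)$ the linearisation of \eqref{eq:system_rho_mu_zero} gives $\dot x=(-\rho+z)x$, $\dot y=(-\rho-z)y$, so at $z=-1$ the \emph{unstable} direction is the $y$-axis (not the $x$-axis), $W^{uE}(O^-)$ is the $(y,z)$-plane, and the separatrix of $O^-$ is $(0,x_0(t),-z_0(t))\subset\{x=0\}$; the orbit $\gamma_0(t)=(x_0(t),0,z_0(t))\subset\{y=0\}$ that you work with is the separatrix of $O^+$, whose extended unstable manifold is the $(x,z)$-plane. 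Since the two cases are conjugate under $S$, your transversality conclusion ("$\{y=0\}$ and $\{z=0\}$ meet transversally") is still the right picture — it is just labelled $O^+$ in the paper's conventions. Also, your worry about the singular limit $\mu\to 0$ is overstated: in the rescaled system the planes $\{x=0\}$, $\{y=0\}$, $\{z=0\}$ are exactly invariant at $\omega=\mu=0$ with $O^\pm=(0,0,\pm1)$ fixed, so the manifolds are concretely there at $\mu=0$ and depend smoothly on the parameters; no extra uniform control of the variational flow is needed, which is exactly the paper's (one-line) argument for B5.
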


\begin{proof} Assumptions B1, B2 are obviously fulfilled and assumption B4 follows from Theorem \ref{th5}. Let us check assumption B3. It needs to be checked at $\omega=0$ and $\rho=h(\mu,0)$. Therefore, we have $(\rho,\mu)$ close to $(1/2,0)$. Then, for the equilibrium $O$ we have
$$
\hat{\nu}=\frac{1}{\rho}\, \left(\frac{\sqrt{b_1}}{|a_0|}\, \sqrt{\mu}+O(\mu)\right)>0
$$
and for the equilibria $O^+$ and $O^-$ we have
$$
\nu= \frac{\frac{2\sqrt{b_1}}{|a_0|}\, \sqrt{\mu}+O(\mu)}{1-\rho+C\sqrt{\mu}+O(\mu)}>0.
$$
Obviously, $\nu/\hat{\nu}\to 2$ as $(\rho,\mu)\to (1/2,0)$, which implies $0<\hat{\nu}<\nu$. The inequality $\nu<(\hat{\nu}+1)/2<1$ is true because $\nu$ and $\hat{\nu}$ tend to $0$ as $\mu\to 0$. So, assumption B3 holds.

It remains to check B5. For $\omega=\mu=0$ the extended unstable manifold of $O^-$ coincides with the $(y,z)$-plane and the stable manifold of $O$ coincides with the $(x,y)$-plane. So, these manifolds intersect transversely. Since these manifolds depend smoothly on the parameters, the transversal intersection is preserved for close values of the parameters, and assumption B5 is also true.
\end{proof}

This completes the proof of Theorem~\ref{th3}.
\bigskip

Different values of $k$ in Theorem~\ref{th3} correspond to the unstable separatrices making different numbers of rotations along the $z$-axis before hitting the cross-section. We illustrate this in Figure~\ref{fig_3Simo}, where we plot portraits of attractors found at the point $s_1$ (Fig.~\ref{fig_3Simo}a), $s_2$ (Fig.~\ref{fig_3Simo}b), and $s_3$ (Fig.~\ref{fig_3Simo}c) marked in Fig.~\ref{bifurcation_diagram}. It follows from \eqref{eq:domain_A-_equation} and \eqref{eq:domain_A+_equation} that the largest regions $\mathcal{A}_k^{\pm}$ correspond to $k=0$. This is in agreement with the numerics, see Fig. \ref{bifurcation_diagram}. However, the case $k=0$ might require additional consideration if $\Omega^-$ or $\Omega^+$ vanish. To avoid this problem, we prove that  for the heteroclinic bifurcation in system \eqref{FlowNormalFormRescaled}, the values of $\Omega^-$ and $\Omega^+$ are not equal to $0$ for sufficiently small $\mu>0$.

\begin{figure}[h]
    \centering
    \includegraphics[width=1.0\linewidth]{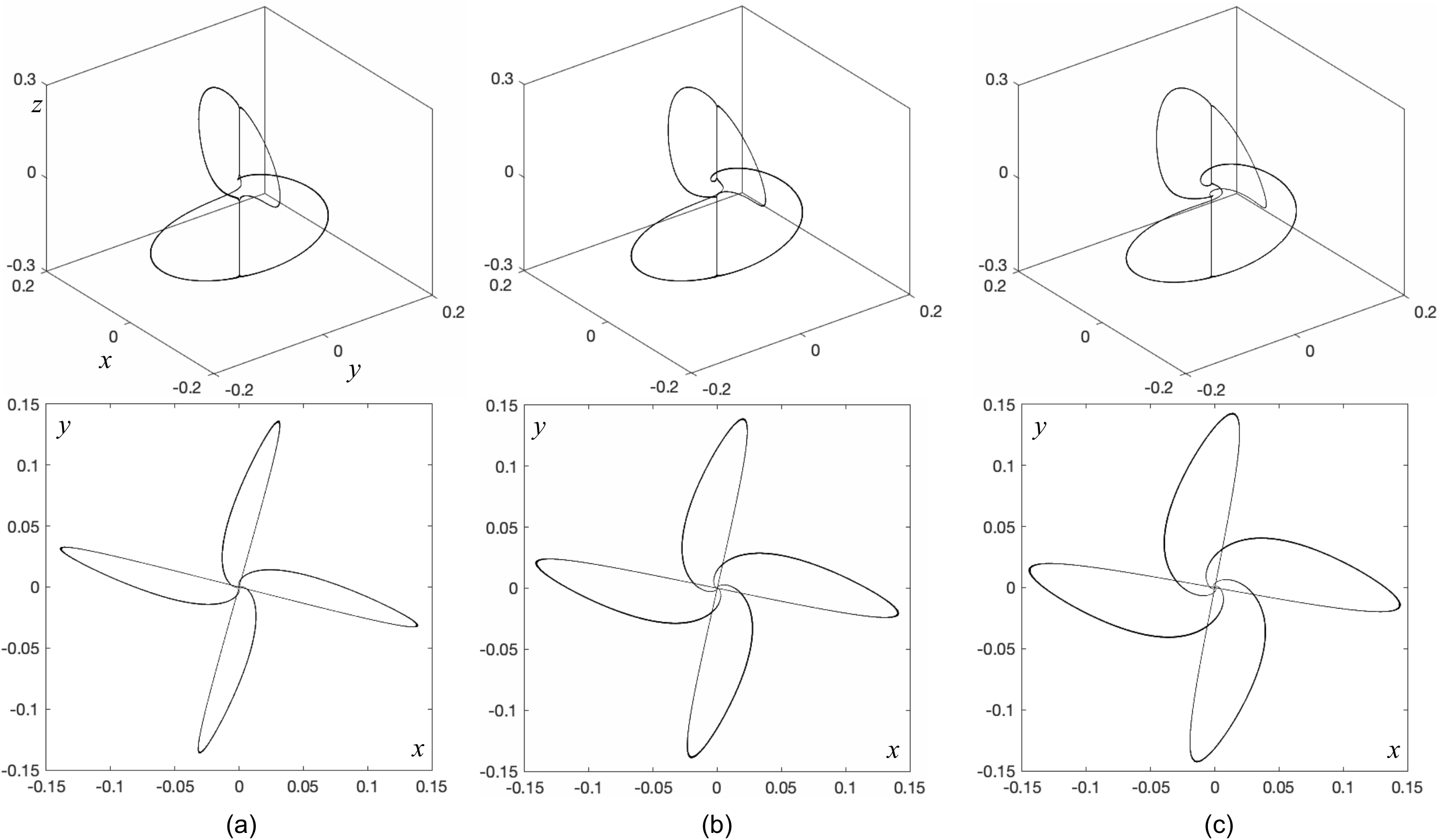}
    \caption{Different Simo angels existing in the regions $\mathcal {A}^+_{1,2,3}$ (at points $s_{1,2,3}$, see Fig.~\ref{bifurcation_diagram}) of system \eqref{FlowNormalFormNumeric} at $\mu = 0.02$: a) $(\beta, \gamma) = (0.042, 0.098)$, b) $(\beta, \gamma) = (0.06, 0.089)$; c) $(\beta, \gamma) = (0.07, 0.0813)$.}
    \label{fig_3Simo}
\end{figure}

Indeed, for $\omega=\mu=0$ the stable manifolds of $O^-$ and $O^+$ are contained in the $(x,z)$-plane and the $(y,z)$-plane, respectively. The tangent subspace $V(z)$ to the strong stable foliation coincides with the line $z=const, y=0$ for $-1<z<0$ and coincides with the line $z=const, x=0$ for $0<z<1$. So, the limit subspaces $V^-$ and $V^+$ are equal to $z=y=0$ and $z=x=0$, respectively. For $\omega=\mu=0$ and $\rho=1/2$, the unstable separatrices of $O^-$ come to $O$ along the $y$-axis. So for such values of the parameters, $\Omega^-=\pi/2$ and $\Omega^+=0$. The value of $\Omega^-$ depends smoothly on the parameters $\rho$ and $\mu$ for fixed $\omega=0$, therefore the value of $\Omega^-$  is close to $\pi/2$. Let us check that $\Omega^+$ is not equal to $0$ for $\mu>0$.

\begin{proposition} Let $A<0$ in system \eqref{FlowNormalFormRescaled}. Then the angle $\Omega^+$ is given by
$$\Omega^+=-\frac{\sqrt{\mu}}{6A} \, \left(\frac{{\rm Im}(a_1)} {|a_0|}+\frac{|A|\, {\rm Im}(a_2)}{ |a_0|\, \sqrt{|b_1|}}+{\rm Im}(a_3/a_0^2)\,|a_0| \right)+O(\mu).
$$
\end{proposition}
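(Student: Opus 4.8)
The plan is to compute $\Omega^+$ by following the unstable separatrix $\Gamma^+$ of $O^+$ and using that its deviation out of the invariant plane $\{{\rm Im}\,u=0\}$ is governed by a single scalar linear equation. I take $\omega=0$ (so $\theta=0$ and $O$ is a dicritical node) and $\rho=h(\mu,0)=1/2+O(\sqrt\mu)$, as provided by Theorem~\ref{th5}. At $\mu=0$ the separatrix $\Gamma^+$ is the explicit orbit $(x_0(t),0,z_0(t))$ from the proof of Theorem~\ref{th5}, lying in $\{{\rm Im}\,u=0\}$; for $\mu>0$ it persists as a nearby orbit $u(t)=x_0(t)+X(t)+iY(t)$, $z(t)=z_0(t)+Z(t)$ with corrections of order $\sqrt\mu$, constructed exactly as in that proof. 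The first step is to relate $\Omega^+$ to this orbit: using the definition of $V^+$, the $\mathbb{Z}_4$-symmetry (which links $\Gamma^+$, the separatrix $\Gamma_1^-$ that fixes the angular coordinate near $O$ in Section~\ref{Shilnikov_criterion}, and the foliation limit $V^+$) and the choice of local coordinates at $O$, one shows that, to leading order in $\sqrt\mu$, $\Omega^+$ equals the angle by which $\Gamma^+$ swings away from its $\mu=0$ position as it enters a neighbourhood of $O$. This identification is the delicate point; the rest is computation.

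For the computation, linearise system~\eqref{FlowNormalFormRescaled} about $(x_0,0,z_0)$. Because the terms $zu^*$, $x_0 Z$ and the quadratic part of the $\dot z$-equation are real-valued, the imaginary part $Y$ decouples from $X$ and $Z$ at order $\sqrt\mu$, and (using $\rho=1/2+O(\sqrt\mu)$) satisfies
\[
\dot Y=-\bigl(\tfrac{1}{2}+z_0(t)\bigr)\,Y+\sqrt\mu\,{\rm Im}\,f_1(x_0(t),z_0(t))+O(\mu),
\]
where, by \eqref{eq:high_order_terms}, ${\rm Im}\,f_1(x_0,z_0)=\bigl(\tfrac{{\rm Im}\,a_1}{|a_0|}+{\rm Im}(a_3/a_0^2)\,|a_0|\bigr)x_0^3+\tfrac{{\rm Im}\,a_2}{|a_0|\sqrt{|b_1|}}\,z_0^2 x_0$. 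The homogeneous equation is solved by $Y_h(t)=\exp\!\bigl(-\!\int_0^t(\tfrac{1}{2}+z_0(s))\,ds\bigr)=\dfrac{1+e^t}{2e^{3t/2}}$, with $Y_h(t)\sim\tfrac{1}{2} e^{-t/2}$ as $t\to+\infty$ and $Y_h(t)\to\infty$ as $t\to-\infty$. Since $\Gamma^+$ is asymptotic to $O^+$ as $t\to-\infty$, the condition $Y(t)\to0$ there selects the solution $Y(t)=\sqrt\mu\,Y_h(t)\int_{-\infty}^{t}Y_h(s)^{-1}\,{\rm Im}\,f_1(x_0(s),z_0(s))\,ds$ (the integral converges at both ends because $x_0,z_0$ decay exponentially). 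Matching $Y(t)$ against $x_0(t)\sim\tfrac{1}{\sqrt{2|A|}}e^{-t/2}$ as $t\to+\infty$ then gives $\Omega^+=\dfrac{\sqrt{2|A|}}{2}\,\sqrt\mu\displaystyle\int_{-\infty}^{+\infty}\dfrac{{\rm Im}\,f_1(x_0(s),z_0(s))}{Y_h(s)}\,ds+O(\mu)$.

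It remains to evaluate the two integrals. Here $x_0^3/Y_h=\dfrac{2e^{3s}}{(2|A|)^{3/2}(1+e^s)^4}$ and $z_0^2x_0/Y_h=\dfrac{2e^{2s}}{\sqrt{2|A|}\,(1+e^s)^4}$, and the substitution $w=z_0(s)=1/(1+e^s)$ reduces these to $\int_0^1(1-w)^2\,dw=\tfrac{1}{3}$ and $\int_0^1 w(1-w)\,dw=\tfrac{1}{6}$, so that $\int_{-\infty}^{+\infty}x_0^3/Y_h\,ds=\tfrac{2}{3(2|A|)^{3/2}}$ and $\int_{-\infty}^{+\infty}z_0^2x_0/Y_h\,ds=\tfrac{1}{3\sqrt{2|A|}}$. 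Substituting these, simplifying, and using $A=-|A|<0$ yields exactly the stated formula for $\Omega^+$. The genuine obstacle is the first step — translating the foliation-based definition of $\Omega^+$ into the tilt of $\Gamma^+$ via the symmetry and the coordinate conventions of Section~\ref{Shilnikov_criterion}; the variation-of-constants computation that follows is routine and parallels the one in the proof of Theorem~\ref{th5}.
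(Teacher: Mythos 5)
Your variation-of-constants computation of the tilt of $\Gamma^+$ is correct and is essentially identical to the paper's: the paper differentiates the separatrix equation with respect to $\sqrt\mu$ to get $\hat Y(t)=dY/d\sqrt\mu|_{\mu=0}$ satisfying $\dot{\hat Y}=-(z_0+1/2)\hat Y+\hat g_2$, which is exactly your linear equation; your homogeneous solution $Y_h$, your selection of the unique decaying solution by integrating from $-\infty$, and your evaluation of $\int x_0^3/Y_h\,ds=\tfrac{2}{3(2|A|)^{3/2}}$ and $\int z_0^2 x_0/Y_h\,ds=\tfrac{1}{3\sqrt{2|A|}}$ all match. However, the step you yourself flag as ``the delicate point'' and ``the genuine obstacle'' --- that $\Omega^+$ equals (to leading order in $\sqrt\mu$) the angle by which $\Gamma^+$ tilts --- is precisely the part of the proof you do not supply, and it cannot be disposed of by symmetry and coordinate conventions alone. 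Recall that $\Omega^+$ is the angle between the $x$-axis of the adapted coordinates near $O$ (fixed by the incoming separatrix) and $V^+=\lim_{z\to0+}V(z)$, the limiting strong-stable direction of the foliation of $W^s(O^+)$ along the $z$-axis. A priori both the separatrix direction \emph{and} $V^+$ could swing by $O(\sqrt\mu)$, in which case $\Omega^+$ would be the difference of two comparable quantities and your formula would not follow from the tilt of $\Gamma^+$ alone.

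The paper closes exactly this gap with a second, independent computation: it writes $W^s(O^+)$ as a graph $x=H(y,z)=\sqrt\mu\,y\,(\eta(z;\sqrt\mu)+O(y))$, derives from the invariance condition the ODE \eqref{eq:eta_equation} for $\eta$, solves it to get $\eta(z)=\tfrac{zD}{2}+O(\sqrt\mu)$ (with $D={\rm Im}(a_2)/(|a_0|\sqrt{|b_1|})$), and concludes $\eta(0)=O(\sqrt\mu)$. Since the tilt of $V^+$ from the original $y$-axis is $\sqrt\mu\,\eta(0)=O(\mu)$, the strong-stable direction does \emph{not} contribute at order $\sqrt\mu$, and only then is it legitimate to identify $\Omega^+$ with the $O(\sqrt\mu)$ tilt of the separatrix. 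Note there is no extra reality symmetry at $\omega=0$ forcing this: ${\rm Im}(a_2)$ enters $\eta(z)$ nontrivially, and the conclusion $\eta(0)=O(\sqrt\mu)$ comes from the structure of the ODE (the inhomogeneous term $zD$ vanishes at $z=0$), not from symmetry. So your proposal is incomplete: you must add the invariant-manifold argument for $V^+$, or some equivalent estimate, before the reduction to the tilt of $\Gamma^+$ is justified.
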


\begin{proof}
In the proof of Theorem \ref{th5} we obtained the equation for the unstable separatrix of the equilibrium $O^+$ at $\omega=0$ and $\rho=h(\mu,0)$, see formula \eqref{eq:unstable_manifold}. We will calculate the direction along which this separatrix comes to $O$ for $\mu>0$. Let the separatix be given by functions $(x(t),y(t),z(t))$. Since $W^s(O)$  is tangent to the $(x,y)$-plane, it is enough to calculate $y(t)/x(t)$ as $t\to+\infty$. We have
$$
\frac{d}{d\sqrt{\mu}} \left(\frac{y(t)}{x(t)}\right) \Big|_{\mu=0}=\frac{\frac{dy(t)}{d\sqrt{\mu}}\, x(t)-y(t)\, \frac{dx(t)}{d\sqrt{\mu}}}{x^2(t)}\, \Big|_{\mu=0}=\frac{\hat{Y}(t)}{x_0(t)},
$$
where $\hat{Y}(t)=\frac{dy(t)}{d\sqrt{\mu}}|_{\mu=0}$ and
$$
x_0(t)=\frac{e^{t/2}}{\sqrt{2|A|}\, (1+e^t)}.
$$
By differentiating  \eqref{nonhomogeneous_system} with respect to $\sqrt{\mu}$, we obtain that $\hat{Y}(t)$ satisfies
$$
\frac{d\hat{Y}}{dt}=-(z_0(t)+1/2)\, \hat{Y}+\hat{g}_2(x_0,z_0),\\
$$
where
$$
\hat{g}_2(x_0,z_0)=\frac{{\rm Im}(a_1)}{|a_0|}\,  x_0^3(t)+ \frac{{\rm Im}(a_2)}{ |a_0|\, \sqrt{|b_1|}} \, z_0^2(t) x_0(t)+{\rm Im}(a_3/a_0^2)\,|a_0| \, x_0^3(t)
$$
and
$$
z_0(t)=\frac{1}{1+e^{t}}.
$$
One can check that
$$
\hat{Y}(t)\stackrel{t\to+\infty}{\sim} -\frac{\sqrt{2}\,e^{-t/2}}{12A\sqrt{|A|}} \, \left(\frac{{\rm Im}(a_1)} {|a_0|}+\frac{|A|\, {\rm Im}(a_2)}{ |a_0|\, \sqrt{|b_1|}}+{\rm Im}(a_3/a_0^2)\,|a_0| \right).
$$
Thus,
$$
\lim_{t\to+\infty} \frac{\hat{Y}(t)}{x_0(t)}=-\frac{1}{6A} \, \left(\frac{{\rm Im}(a_1)} {|a_0|}+\frac{|A|\, {\rm Im}(a_2)}{ |a_0|\, \sqrt{|b_1|}}+{\rm Im}(a_3/a_0^2)\,|a_0| \right).
$$
Since $O$ is a dicritical node on its stable manifold for $\omega=0$, the direction in which the separatrix comes to $O$ depends smoothly on the parameters, and hence this direction is given by
$$
\lim_{t\to+\infty}\frac{y(t)}{x(t)}=-\frac{\sqrt{\mu}}{6A} \, \left(\frac{{\rm Im}(a_1)} {|a_0|}+\frac{|A|\, {\rm Im}(a_2)}{ |a_0|\, \sqrt{|b_1|}}+{\rm Im}(a_3/a_0^2)\,|a_0| \right)+O(\mu).
$$

Thus, to prove the proposition, it is enough to show that $V^+$ coincides with the $y$-axis up to $O(\mu)$-terms. We find the stable manifolds $W^s(O^+)$ in the form
$$
x=H(y,z)=\sqrt{\mu}\, y\, (\eta(z;\sqrt{\mu})+O(y)).
$$
It contains the $z$-axis and coincides with the $(y,z)$-plane for $\mu=0$. We need to check that the limit of $\eta(z;\sqrt{\mu})$ as $z\to 0+$ is $O(\sqrt{\mu})$. The surface $x=H(y,z)$ must satisfy the invariance condition
\begin{equation}\label{eq:invariance_condition}
\dot{x}=\frac{\partial H}{\partial y} \dot{y}+\frac{\partial H}{\partial z} \dot{z}=\sqrt{\mu}\, (\eta(z)+O(y))\, \dot{y}+\sqrt{\mu}\,y\, (\eta'(z)+O(y))\, \dot{z} .
\end{equation}
As in \eqref{eq:CD_def}, we denote
$$
C+iD= \frac{a_2}{|a_0|\, \sqrt{|b_1|}}.
$$
By \eqref{FlowNormalFormRescaled}, \eqref{eq:high_order_terms}, for $x=H(y,z)$ we have
\begin{equation}\label{eq:system_on_Ws}
\begin{aligned}
\dot{x}&= (-\rho +z)\,y\, \sqrt{\mu} \,\eta(z)+\sqrt{\mu}\, z^2 y\, \bigl(C\sqrt{\mu}\, \eta(z)-D+O(\mu)\bigr)+\sqrt{\mu}\, O(y^2)\\
\dot{y}&=(-\rho -z)\,y +\sqrt{\mu}\, z^2 y\, \bigl(C+D \sqrt{\mu}\, \eta(z) +O(\mu)\bigr)+\sqrt{\mu}\, O(y^2)\\
\dot{z}&=\sqrt{\mu}\left(\frac{\sqrt{b_1}}{|a_0|}+O(\sqrt{\mu})\right)\, (z-z^3) +O(y^2)
\end{aligned}
\end{equation}
Substituting \eqref{eq:system_on_Ws} in \eqref{eq:invariance_condition} and omitting the $O(y^2)$-terms, we obtain the equation for $\eta(z)$
\begin{equation}\label{eq:eta_equation}
2\eta(z)-z\, D-D\mu \,z \, \eta(z)=\eta'(z)\,\sqrt{\mu}\left(\frac{\sqrt{b_1}}{|a_0|}+O(\sqrt{\mu})\right)(1-z^2).
\end{equation}
For $\mu=0$ this equation takes the form
$$
2\eta(z)-zD=0
$$
or
$$
\eta(z)=\frac{z D}{2}.
$$
One can check that equation \eqref{eq:eta_equation} has a solution bounded for $z\in[0,1]$ and smoothly depending on $\mu$. This solution corresponds to the sought function $\eta(z)$ in the formula for the stable manifold, and has the form
$$
\eta(z)=\frac{z D}{2}+O(\sqrt{\mu}),
$$
Thus,
$$
\eta(0)=O(\sqrt{\mu}).
$$
This completes the proof.

\end{proof}

{\bf Acknowledgments.}
The authors are grateful to Kirill Zaichikov, for the help with computation of Lyapunov diagrams.

The work was supported by the Leverhulme Trust grant RPG-2021-072, the RSF grant No. 19-71-10048 (Sections~\ref{Pseudohyperbolic_attractors} and \ref{sec:Existence_of_LA}), the RSF grant No. 23-71-30008 (Section~\ref{sec4}), and by the Basic Research Program at HSE (Section~\ref{Shilnikov_criterion}).

\end{document}